\documentclass[11pt, reqno]{article}

\usepackage{fullpage}
\usepackage{enumerate}
\usepackage{setspace}

\usepackage{ stmaryrd }
\usepackage[shortlabels]{enumitem}
\usepackage{amsmath,amsfonts,amssymb,graphics,amsthm, comment}
\usepackage{hyperref}
\hypersetup{
    colorlinks=true,
    linkcolor=blue,
    citecolor=red,
    urlcolor=blue,
    pdfborder={0 0 0}
}    

\usepackage[normalem]{ulem}
\usepackage{appendix}        

\usepackage{graphicx}
\usepackage{xcolor}
\usepackage{bbm}
\usepackage{wrapfig} 
\usepackage{xcolor}

\usepackage[font=sf, labelfont={sf,bf}, margin=1cm]{caption}

\usepackage{cleveref}
  \crefname{theorem}{Theorem}{Theorems}
  \crefname{thm}{Theorem}{Theorems}
  \crefname{lemma}{Lemma}{Lemmata}
  \crefname{lem}{Lemma}{Lemmata}
  \crefname{remark}{Remark}{Remarks}
  \crefname{prop}{Proposition}{Propositions}
  \crefname{proposition}{Proposition}{Propositions}
\crefname{notation}{Notation}{Notations}
\crefname{claim}{Claim}{Claims}
  \crefname{defn}{Definition}{Definitions}
  \crefname{corollary}{Corollary}{Corollaries}
  \crefname{section}{Section}{Sections}
  \crefname{figure}{Figure}{Figures}
    \crefname{assumption}{Assumption}{Assumptions}

\newtheorem{thm}{Theorem}[section]

\newtheorem{lemma}[thm]{Lemma}
\newtheorem{corollary}[thm]{Corollary}
\newtheorem{prop}[thm]{Proposition}
\newtheorem{proposition}[thm]{Proposition}

\newtheorem{question}[thm]{Question}

\numberwithin{equation}{section}

\theoremstyle{definition}
\newtheorem{remark}[thm]{Remark}

\def\cV{\mathcal{V}}

\def\cH{\mathcal{H}}
\def\cG{\mathcal{G}}
\def\cF{\mathcal{F}}
\def\cE{\mathcal{E}}
\def\cD{\mathcal{D}}
\def\cC{\mathcal{C}}
\def\cB{\mathcal{B}}
\def\cA{\mathcal{A}}


\newcommand{\bbZ}{\mathbb{Z}}

\def \ve {\varepsilon}

\def\R{\mathbb{R}}
\def\Z{\mathbb{Z}}
\def\N{\mathbb{N}}

\def\R{\mathbb{R}}

\def  \p- {p\textunderscore}

\def\eps{\varepsilon}
\def\ep{\varepsilon}

\usepackage{extarrows}

\title{Logarithmic variance for the height function of square-ice}
\author{Hugo Duminil--Copin\thanks{IHES, Universit\'e de Gen\`eve} \and Matan Harel\thanks{University of Tel Aviv} \and Benoit Laslier\thanks{Universit\'e de Paris--Diderot} \and Aran Raoufi \thanks{ETH} \and Gourab Ray\thanks{University of Victoria}}

\begin{document}
\maketitle
\begin{abstract}
In this article, we prove that the height function associated with the square-ice model (i.e.~the six-vertex model with $a=b=c=1$ on the square lattice), or, equivalently, of the uniform random homomorphisms from $\Z^2$ to $\Z$, has logarithmic variance. This establishes a strong form of roughness of this height function. 
\end{abstract}
\section{Introduction}

\subsection{Main results}

Two-dimensional models for random surfaces are one of the main subjects of interest of modern statistical physics. These models often undergo a phase transition between a {\em localized} phase where the random surface does not fluctuate (or equivalently, the variance of the height function at a point remains bounded), and a {\em delocalized} phase where it does, in the sense that the variance goes to infinity as the domain grows. In the latter, the model is usually predicted to have a Gaussian behaviour and to converge in the sense of distributions in the scaling limit to the Gaussian Free Field (GFF).

There are many models of random surfaces but only a few for which it is known whether the model is in its localized or delocalized phase. Even in cases where the random surface was proved to be delocalized, the convergence to GFF is far from understood {in the majority of cases}. The situation is particularly catastrophic in models where the surface is modeled as a function $h$ from the vertices of a graph $G$ to the integers such that $|h_v-h_u| =1$. We call such functions \emph{homomorphisms} or {\em height functions}. 

\begin{figure}[htb]
	\begin{center}
		\includegraphics[width=0.6\textwidth, page=1]{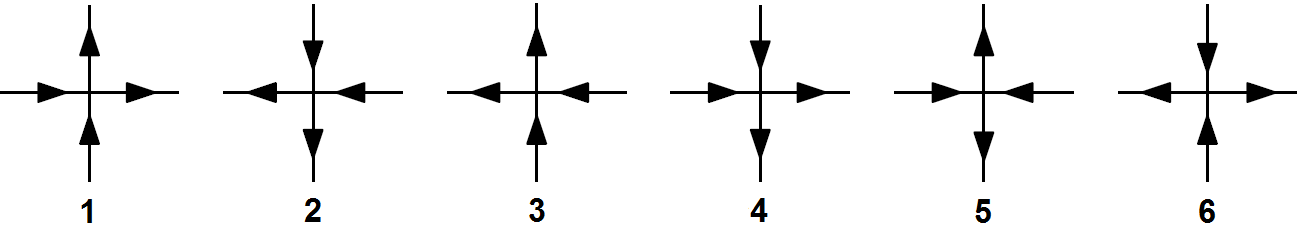}
	\end{center}
	\caption{The $6$ possibilities for vertices in the six-vertex model.
	Each possibility comes with a weight $a$, $b$ or $c$.}
	\label{fig:the_six_vertices}
\end{figure}

The six-vertex model was initially proposed by Pauling in 1935 in order to study the thermodynamic properties of ice, and in this paper we study a special case of this model.   Fix an integer $n$ and consider the torus $\mathbb T_n:=(\mathbb Z/n\mathbb Z)^2$ and its dual graph $\mathbb T_n^*$. Let $\omega$ be an arrow configuration on the edges of $\mathbb T_n^*$ assigning one of two orientations to each edge of the graph.  The six-vertex model is given by restricting $\omega$ to configurations that have an equal number of arrows entering and exiting each vertex of $\mathbb T_n^*$ -- a relation we call the {\em ice rule}. The rule leaves six possible configurations at each such vertex, depicted in Figure~\ref{fig:the_six_vertices}. 
Assign the weight $a$ to configurations 1 and 2, $b$ to 3 and 4, and $c$ to 5 and 6. 
The six-vertex model with weights $a,b,c$ consists in picking such a configuration at random with probability proportional to $a^{n_1 + n_2}b^{n_3 + n_4} c^{n_5 + n_6}$,
where $n_i$ is the number of vertices with configuration $i$ in $\omega$, if $\omega$ satisfies the ice-rule, and zero otherwise. 

Thanks to the ice-rule, six-vertex configurations are naturally associated with a {\em height function} $h$ on $\mathbb T_n$ defined by the property that the increment between the endpoints $u$ and $v$ of the edge $e$ is $+1$ if the associated arrow of the dual edge $e^*$ is crossed from left to right when going from $u$ to $v$. The height function is technically defined on $\mathbb{Z}^2$, which is the cover of $\mathbb{T}_n$, as a lift of the gradient, which provides a natural definition for $h_u - h_v$ for any $u,v \in \mathbb{T}_n$. On the subset of arrow configurations with as many up arrows as down arrows on each line, and as many left arrows as right arrows on each row, we obtain a well defined height function on $\mathbb T_n$. Note that $h$ is defined up to constant, and we will therefore often consider equivalence classes of $h$ for the relation $\sim$, where $h \sim h'$ if and only if $h-h'$ is constant. Also note that the height function partitions the lattice $\Z^2$ into vertices which always take odd values and vertices which always take even values. We call them respectively \emph{odd vertices} and \emph{even vertices}. Throughout the article, we fix the convention that $\{(i,j) \in \Z^2: (i+j)\mod 2 = 0\}$ is the set of even vertices, and that homomorphisms take even values on even vertices.  

When $a=b=1$ and $c$ is arbitrary, the logarithm of the probability of $h$ is proportional to the number of diagonally connected vertices $u$ and $v$ for which $h_u=h_v$. In particular, when $c=1$, which corresponds to the famous {\em square-ice} model, the distribution of $h$ is the uniform measure. 

The six-vertex model became the archetypical example of an integrable model after Lieb's solution of the model in 1967 in its anti-ferroelectric and ferroelectric phases \cite{Lie67a,Lie67b,Lie67c} using the Bethe ansatz (see \cite{BetheAnsatz1} and references therein for a review). Since its exact solution, the model has been intensively studied, yet most of the results had fallen short of addressing the question of localization/delocalization of the associated height function. The situation changed in the last two decades. The model at its free fermion point (i.e.~when $c=\sqrt 2$) was directly related to the dimer model, and the height function was proved to converge to GFF (see \cite{BK} and reference therein). For $c\ge \sqrt3$, the model is related to the critical random-cluster models with $q\ge1$, where a discontinuous/continuous phase transition was proved in  \cite{DGHMT} and \cite{DST} for $q>4$ and $1\le q\le 4$, respectively (see also \cite{RS19}). This immediately implies that the associated height function of the six-vertex model is localized for $c>2$ and delocalized for $c= 2$ (see, for example, \cite{glazman2019transition}). Finally \cite{CPT18}, borrowing ideas from \cite{She05} proved that the square-ice height function is delocalized. 

In this paper, we wish to study the behaviour of the height function in the delocalized phase. 
We start by the following result. Let $\phi_{\mathbb T_n}$ be the uniform distribution for height function on the torus\footnote{Technically, such a distribution is only defined up to a translation --- we will assume $h_u = 0$ for some fixed $u$, and note that all terms in the theorem below do not depend on the choice of $u$.}. 

\begin{thm}\label{thm:torus}
There exist $c,C\in(0,\infty)$ such that for every $n\ge1$ and every $u,v\in \mathbb T_n$,
$$c\log \|u-v\|_1\le \phi_{\mathbb T_n}[(h_u-h_v)^2]\le C\log \|u-v\|_1,$$
where $\|\cdot \|_1$ is the $L^1$ distance in $\mathbb T_n$.
\end{thm}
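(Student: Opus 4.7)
The strategy is a dyadic multi-scale decomposition, fed by RSW-type crossing estimates for the level lines of the uniform homomorphism. Since the delocalization of $h$ is already known from \cite{She05,CPT18}, the task is really to quantify the rate of growth of the variance. Fix $v \in \mathbb{T}_n$ and, with $n \geq \|u-v\|_1$, consider the dyadic annuli $A_k = \{x \in \mathbb{T}_n : 2^k \le \|x - v\|_1 < 2^{k+1}\}$ for $k = 0, 1, \ldots, \lfloor \log_2 n \rfloor$. Let $M_k$ denote some scale-averaged version of the height on $\partial B(v, 2^k)$ (for instance the mean over the inner boundary, or the value at a carefully chosen representative vertex of appropriate parity). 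Then $h_u - h_v$ telescopes through the increments $M_{k+1} - M_k$, and it suffices to control (i) $\mathrm{Var}(M_{k+1} - M_k)$ on each scale and (ii) the cross-scale correlations.

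The central technical step is to establish RSW-type crossing estimates for the level lines of $h$: conditionally on any admissible boundary configuration outside $B(v, 2^{k+2})$, a level line crosses $A_k$ in each cardinal direction with probability at least some universal constant $c > 0$, independent of $k$ and $n$. Given such crossing estimates, the \textbf{lower bound} follows by a standard argument: at each scale one produces an event of probability bounded away from zero on which $|M_{k+1} - M_k| \ge 1$ and which is asymptotically independent of the other scales (using finite-energy and a spatial-Markov/domain-Markov property of the six-vertex measure to decorrelate disjoint annuli). Combining these $\Theta(\log n)$ approximately independent, nontrivial increments via a second-moment argument yields $\mathrm{Var}(h_u - h_v) \ge c' \log n$.

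For the matching \textbf{upper bound}, the same RSW estimates imply that the number of level lines separating $\partial B(v, 2^k)$ from $\partial B(v, 2^{k+1})$ has uniformly bounded second moment, since the probability of nesting many disjoint crossings in $A_k$ decays geometrically by iterating the RSW lower bound on a separate family of annular sub-regions. Because $|M_{k+1} - M_k|$ is dominated (up to a constant) by the number of such level lines, each annulus contributes $O(1)$ to the variance, and the Cauchy--Schwarz inequality applied to the telescoping sum across $O(\log n)$ scales gives $\mathrm{Var}(h_u - h_v) \le C \log n$. The reduction from $\mathbb{Z}^2$-level-line arguments to the torus should pose no additional difficulty once crossing estimates in bounded aspect-ratio topological rectangles are available, as one can restrict the relevant annuli to lie well inside $\mathbb{T}_n$.

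The main obstacle is the RSW-type crossing estimate itself. Unlike the regime $c \ge \sqrt 3$, at $a=b=c=1$ there is no FK-type representation with FKG-positively-correlated level lines, so the standard positivity-based RSW machinery does not apply directly. My plan would be to isolate a conditional monotonicity --- for instance, by revealing the loop structure of one parity class (say, the level lines separating even-valued plateaus from their odd-valued neighbours) and showing that, conditionally, the remaining configuration admits a monotone coupling between pairs of boundary conditions --- then running a Russo--Seymour--Welsh argument inside this conditional measure. Upgrading the resulting geometric crossings to nontrivial \emph{height} increments, using delocalization as the input that eliminates the possibility of a flat localized phase, is the remaining delicate step.
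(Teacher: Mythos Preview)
Your high-level strategy --- RSW-type crossing input feeding a dyadic annulus decomposition --- is what the paper does. But two steps of your implementation have real gaps.

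\textbf{The upper bound via Cauchy--Schwarz loses a logarithm.} If you telescope $h_u - h_v = \sum_{k} (M_{k+1}-M_k)$ across $K=O(\log n)$ scales with $\mathrm{Var}(M_{k+1}-M_k)=O(1)$, Cauchy--Schwarz only gives $\mathrm{Var}(\sum X_k)\le K\sum\mathrm{Var}(X_k)=O((\log n)^2)$. To reach $O(\log n)$ you need orthogonality of the increments, and your proposed $M_k$ (a boundary average or a fixed representative vertex) is not a martingale for this model. The paper does not telescope at all for the upper bound: it bounds the tail $\phi[h_0\ge N]$ directly. Letting $\ell_k$ be the outermost $\times$-loop of $h\ge 2k$ around the origin and $\mathbf N_i$ the number of indices $k$ for which $\ell_k$ has diameter in $[2^i,2^{i+1}]$, RSW gives a uniform conditional geometric tail $\phi[\mathbf N_i=N_i\mid \mathbf N_1,\dots,\mathbf N_{i-1}]\le C_0\ep^{N_i}$. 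A combinatorial estimate on $\sum_i\mathbf N_i=N$ then yields stretched-exponential decay of $\phi[h_0\ge N]$ for $N\ge C\log n$, hence $\phi[h_0^2]=O(\log n)$. This sidesteps cross-scale covariances entirely.

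\textbf{The monotonicity you need is FKG for $|h|$, not a parity-class conditioning.} Your plan to ``reveal the loop structure of one parity class and hope for conditional monotonicity'' is not what works here, and it is unclear how it would yield an RSW statement. The correct tool is that for suitable ($|h|$-adapted) boundary conditions, $|h|$ itself satisfies FKG and comparison of boundary conditions (Proposition~\ref{prop:FKG_modh}). This is what drives the RSW theorem (Theorem~\ref{thm:RSW}): it lets one push $h=0$ or $h=2$ boundary conditions in and out, convert $|h|\ge2$ crossings to $h\ge2$ crossings, and compare domains. It is also what reduces the torus to the box: fixing $h_x=0$ and using FKG for $|h|$ gives $\phi_{\mathbb T_n}[(h_y-h_x)^2]=\phi_{\mathbb T_n}^{\{x\},0}[h_y^2]\ge \phi_{\Lambda_{|x-y|}(y)}^0[h_y^2]$ in one line, after which the box estimate applies.

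Your lower-bound sketch is essentially the paper's: Corollary~\ref{cor:main} gives, in each dyadic annulus, a $\times$-loop of $|h|=2$ with uniformly positive probability; exploring to that loop and using spatial Markov plus the $\pm$ symmetry yields $v_{2n}\ge v_n+c$, which iterates to $c\log n$.
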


In order to state the main result of this paper, we need some more notation. 
A {\em path} (\emph{$\times$-path}) is a sequence of vertices $v_0,\dots,v_n$ in $\Z^2$ such that for every $0\le i<n$, $v_i$ and $v_{i+1}$ are at a Euclidean distance 1 (resp.~$\sqrt 2$) of each other. When $v_n=v_0$, we call a $\times$-path a {\em circuit}. {Throughout the paper, we will assume that $(v_i, v_{i+1}) \neq (v_j, v_{j+1})$ whenever $i \neq j$, and that the ordering is chosen so that the induced continuous circuit\footnote{The continuous path is made by joining the vertices by straight lines  in $\mathbb{R}^2$.} is not self-crossing (though it may be self-touching).} We will often use the notation $[v_iv_j]$ (resp.~$(v_iv_j)$) for the subpath of the path made of the vertices $v_i,\dots,v_j$ (resp.~$v_{i+1},\dots,v_{j-1}$). {Given a circuit, we define a {\em domain} $D$ to be the finite subset of $\Z^2$ that is enclosed by the circuit; if the circuit is made up of even (resp.~odd) vertices, we call the domain an {\em even} (resp.~{\em odd}) {\em domain}. We will denote the circuit which defines the domain $D$ by $\partial D$; we will refer to this circuit as the boundary of the domain.  A {\em quad} $(D,a,b,c,d)$ is given by a domain with four marked points in $\partial D$, appearing in the order induced by the circuit. Throughout the paper, we refer to certain subgraphs of $\Z^2$ colloquially as `domains' even though the subgraphs may not precisely coincide with the description here. In each such case, the subgraph is explicitly described so there should be no confusion.}

%


For a quad $(D,a,b,c,d)$, the event $\cC_{h\in I}(D,a,b,c,d)$ is the event that there exists a path of vertices in $D$ with height in $I$ that connects $[ab]$ to $[cd]$ 
(we emphasize that $[ab],[cd]$ are $\times$-paths). 
We  use the shortcut $h=m$, $h\ge m$ and $|h|>0$ when $I=\{m\}$, $I=[m,\infty)$ and $I=\Z\setminus\{0\}$. We extend this definition to $\times$-paths by introducing the notation $\cC_{h\in I}^\times(\cD)$. 
When $D$ is a rectangle $R$, we introduce $\cH_\#^\#(R):=\cC_\#^\#(R,a,b,c,d)$ and $\cV_\#^\#(R)=\cC_\#^\#(R,b,c,d,a)$, where $a,b,c,d$ are the four corners of $R$ indexed in counter-clockwise order starting from the top-left one, corresponding to the existence of horizontal and vertical crossings of the rectangle. 
{ We write $\Lambda_{x,y}$ for $\Lambda_{\lfloor x \rfloor, \lfloor y \rfloor} $ for $x,y \in \R$ and $\Lambda_x$ in short for $\Lambda_{x,x}$.}

Let $\phi_D^0$ be the uniform distribution on height functions defined on an even domain $D$ which are equal to 0 on $\partial D$. This model corresponds to the height function of square-ice when the arrow configurations are defined on the set $E^*$ of edges of $(\Z^2)^*$ bordering the faces of $(\Z^2)^*$ centred on vertices in $D$, and one applies the {\em generalized ice-rule} stating that every vertex has the same number of incoming and outgoing arrows.  
We consider two possible behaviours:
\begin{itemize} 
\item[B1] There exist  $C,c>0$ such that  for every $k$ and every even domain $D$
 $$\phi_{D}^0[|h_0|>k]\le C\exp(-k^c).$$
\item[B2] {For every $\varepsilon,R,\rho,k>0$, there exists $c=c(\varepsilon,R,\rho,k)>0,C = C(\rho,k)>0$ such that for every $n \ge C$ and every even domain $D\subset \Lambda_{Rn}$ such that the distance between $\Lambda_{\rho n,n}$ and $\partial D$ is at least $\varepsilon n$,
 \begin{align}
c~\le~ &\phi_D^0 [\cH_{h\ge k}(\Lambda_{\rho n,n})]~\le~1- c,\label{eq:ha1}\\
c~\le~&\phi_D^0 [\cH_{h=k}^\times(\Lambda_{\rho n,n})]~\le~1-c.\label{eq:ha2}
\end{align}
}
\end{itemize}

The first case corresponds to a strongly localized behaviour, while the second one corresponds to a delocalized one. For instance, we will see that B2 implies Theorem~\ref{thm:torus} very easily. In fact, it also implies that $\phi_D^0[h_0^2]$ is growing logarithmically in the distance to $\partial D$. Let us mention that it also easily implies tightness of the family of uniformly chosen height functions when taking the scaling limit of the model, a fact which may be useful to prove convergence to GFF. 

We now state what we consider to be the main contribution of this paper. 
 \begin{thm}\label{thm:dichotomy}
For the height function of square-ice, either B1 or B2 occurs.
\end{thm}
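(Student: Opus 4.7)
The plan is to argue by contradiction: assume B1 fails and deduce B2. Negating B1 provides some $k_0\ge 1$ together with sequences $n_j\to\infty$ and even domains $D_j\supset \Lambda_{n_j,n_j}$ for which $\phi_{D_j}^0[|h_0|>k_0]$ decays slower than $\exp(-n_j^c)$ for every fixed $c>0$. Geometrically, $\{h_0>k_0\}$ forces a family of nested $\times$-circuits around the origin, at least one at each of the heights $1,2,\dots,k_0$ (each separating the $0$-boundary from a large value at the origin). Hence by a union bound over $k\in\{1,\dots,k_0\}$ and over dyadic annuli inside $D_j$, there exists some $k\in\{1,\dots,k_0\}$ and some dyadic annulus in which the probability of a $\times$-circuit at height $k$ is not exponentially small in the radius.

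The key structural input I would then use is the \emph{level-line representation}. Since $h$ is a homomorphism, for each integer $m$ the interface between $\{h\ge m+1\}$ and $\{h\le m\}$ is carried by $\times$-connected vertices of height exactly $m$, giving a planar self-duality: in any quad, failure of $\cC_{h\ge m+1}$ is equivalent to a dual $\times$-path at height $\le m$ in the transverse direction. Composed with the symmetry $h\mapsto 2m-h$ (which preserves the class of measures $\phi_D^0$ once parities are handled), this exchanges $\cH_{h\ge m+1}$-events with $\cH^\times_{h=m}$-events and will let me deduce the second line of B2 from the first.

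I would next run an RSW-type bootstrap. Using the domain Markov property, which $\phi_D^0$ enjoys by locality of the ice rule, together with a positive-association mechanism (either a direct FKG inequality for monotone functions of $h$ restricted to one parity sublattice, or a loop/XOR representation on which FKG is automatic), the non-exponential lower bound on circuit probabilities from the first paragraph can be upgraded to a uniform lower bound on rectangle crossings $\phi_D^0[\cH_{h\ge k}(\Lambda_{\rho n,n})]$. The mechanics follow the template of Duminil-Copin--Sidoravicius--Tassion: glue two crossings using a long $\times$-circuit, iterate to handle arbitrary aspect ratios, and use the planar self-duality together with $h\mapsto -h$ to obtain the matching upper bounds. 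This produces \eqref{eq:ha1} and, via the duality above, \eqref{eq:ha2}.

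The main obstacle is the positive-association step. Height functions of six-vertex type are \emph{not} stochastically monotone in their boundary conditions in the naive sense, so an FKG inequality for events like $\cH_{h\ge k}$ cannot be invoked directly. Handling this will require a dedicated trick, likely a coupling with two independent copies whose XOR has a tractable loop representation, or a parity-based combinatorial identity exploiting the uniformity of $\phi_D^0$ in the $a=b=c=1$ case. A secondary difficulty is uniformity in the shape of $D$ in B2: with no stochastic ordering between $\phi_D^0$ and $\phi_{D'}^0$ when $D\subset D'$, propagating the crossing estimates up to general $D$ with $\partial D$ at distance $\ge \varepsilon n$ from $\Lambda_{\rho n,n}$ will need a finite-energy-type argument in the annulus, absorbing the boundary dependence inside an outermost level circuit produced by the RSW step itself.
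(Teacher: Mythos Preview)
Your proposal has two genuine gaps that prevent it from going through.

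First, your worry about positive association is misplaced, but in the wrong direction. For square-ice (and in fact for $a=b=1$, $c\ge1$), the height function \emph{does} satisfy FKG and comparison between boundary conditions in the naive sense: if $F,G$ are increasing in $h$ then $\phi_D^{B,\kappa}[FG]\ge\phi_D^{B,\kappa}[F]\phi_D^{B,\kappa}[G]$, and raising boundary values stochastically raises $h$. This follows from Holley's criterion applied to the uniform measure on homomorphisms. So your proposed ``XOR/loop coupling'' workaround is unnecessary. What you are \emph{missing}, however, is the subtler monotonicity property that actually drives the argument: an FKG inequality and comparison of boundary conditions for $|h|$ rather than $h$. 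Events like $\cH_{|h|\ge 2}^\times$ are increasing in $|h|$ but not in $h$, and the paper repeatedly needs to condition on, or push, boundary conditions specified as absolute values (e.g.\ exploring an outermost $|h|=2$ loop). Without FKG for $|h|$ you cannot decouple two distant annuli by sandwiching a $h=0$ circuit between them, and this decoupling is essential.

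Second, and more seriously, the logical skeleton you propose is too weak to yield the dichotomy. Negating B1 only gives you, along a subsequence $n_j$, that circuit probabilities are not stretched-exponentially small; an RSW bootstrap from this input will at best give crossing estimates along the same subsequence, not the \emph{uniform} lower bounds in $n$ that B2 requires. The paper closes this gap with a renormalization inequality of the form $a_{10n}\le C a_n^2$, where $a_n$ is the probability (in $\Lambda_{5n}$ with zero boundary) of a $\times$-loop of $h\ge2$ in $\Lambda_{2n}\setminus\Lambda_n$. This inequality forces a sharp dichotomy on the single sequence $(a_n)$: either $a_{m_0}<(2C)^{-1}$ for some $m_0$, in which case iteration gives $a_{10^r m_0}\le C_1\exp(-c_12^r)$ and B1 follows, or $a_n\ge(2C)^{-1}$ for \emph{every} $n$, which is the uniform seed from which B2 is built by combining loops in successive annuli. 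Proving $a_{10n}\le Ca_n^2$ is itself nontrivial: one must first show that a $h\ge2$ loop at scale $10n$ forces two well-separated $h\ge2$ loops at scale $n$, then decouple them by constructing (via the RSW theorem and a mixed-boundary crossing estimate) a $h\le0$ barrier between them, and finally handle the possibility that the loop has very large height by a separate large-deviation bound. None of this structure appears in your outline.
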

We insist on stating this result as a dichotomy between two possible behaviours since we believe that this result can be extended to more general random height functions (see Question~\ref{q1} below). Nevertheless, the result of \cite{CPT18,She05} excludes B1, so that we get the following immediate corollary.
\begin{corollary}\label{cor:main}
For the height function of square-ice, B2 occurs.
\end{corollary}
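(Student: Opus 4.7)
The plan is to deduce the corollary directly from Theorem~\ref{thm:dichotomy} together with the delocalization result of Sheffield~\cite{She05} (revisited in~\cite{CPT18}). Since Theorem~\ref{thm:dichotomy} asserts that one of B1 or B2 must hold for the square-ice height function, it suffices to exclude B1.

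To that end, I would specialise the tail bound in B1 to $k=0$, which immediately yields
$$\phi_D^0[h_0\neq 0]\le \exp(-n^c)\xrightarrow[n\to\infty]{}0$$
uniformly over all even domains $D\supset [-n,n]^2$. Consequently, under B1, along any exhausting sequence of even domains the one-point marginal $\phi_D^0[h_0=\cdot\,]$ concentrates on $0$; in particular $\phi_D^0[h_0=0]\to 1$ and $\var_{\phi_D^0}(h_0)$ stays bounded (indeed, using that $|h_0|$ is deterministically controlled by the graph distance to $\partial D$, one even gets that it tends to $0$ along reasonable exhausting sequences). In plain terms, B1 is a \emph{very} strong form of localization.

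I would then invoke the theorems of~\cite{She05,CPT18}, which assert precisely the opposite: the square-ice height function is \emph{delocalized}, meaning that $\var_{\phi_D^0}(h_0)\to\infty$ as $D\uparrow \Z^2$, and in particular $\phi_D^0[h_0=0]\to 0$. This is incompatible with the consequence of B1 derived above, so B1 fails, and Theorem~\ref{thm:dichotomy} then forces B2. There is no genuine obstacle in this argument; the only step that deserves attention is matching the specific form of delocalization stated in~\cite{She05,CPT18} to what is needed here, which is immediate since both the divergence of the variance and the vanishing of $\phi_D^0[h_0=0]$ already contradict the uniform concentration forced by B1.
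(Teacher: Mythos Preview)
Your proposal is correct and follows exactly the approach the paper takes: the paper's entire argument is the single sentence preceding the corollary, namely that Sheffield's delocalization result \cite{She05} (and \cite{CPT18}) excludes B1, whence Theorem~\ref{thm:dichotomy} forces B2. Your write-up simply unpacks why B1 is incompatible with delocalization, which is straightforward. One small remark: the assertion that $\phi_D^0[h_0=0]\to 0$ follows ``in particular'' from $\var_{\phi_D^0}(h_0)\to\infty$ is not an immediate logical implication on its own, but this is harmless since your main contradiction goes through the variance and the deterministic Lipschitz bound $|h_0|\le \dist(0,\partial D)$ along a nice exhausting sequence such as $\Lambda_n^{\rm even}$.
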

At a high level, our strategy to prove Theorem~\ref{thm:dichotomy} follows \cite{DST}, with some inspiration from \cite{DT19}. It is based on a renormalization argument (which is made more complicated by the height-function structure, see the discussion in Section~\ref{sec:4.2}) and a Russo-Seymour-Welsh (RSW) theory for height functions. The RSW theory is a study of probabilities of crossing events in planar percolation models. This theory was initially created for the study of Bernoulli percolation \cite{Rus78,SeyWel78,BolRio06c}. It has blossomed in the past decade and now applies to a wide variety of percolation models \cite{BolRio10,BefDum12,Tas15,DumHonNol11,DST,DGPS,GM}. In this paper, we provide the first adaptation of the theory to the study of random height functions. 

\begin{thm}\label{thm:RSW}
For every $\rho>0$, there exists $c=c(\rho)>0$ and $C =C(\rho)>0$ such that for every $n \ge C(\rho)$ and every even domain $D$ containing $\Lambda_{\rho n,n}$,
\begin{equation}
\phi_{\overline D}^0[\cH_{h\ge 2}^\times(\Lambda_{\rho n,n})]~\ge~c\phi_{D}^0[\cV_{h\ge 2}^\times(\Lambda_{\rho n,n})]^{1/c},
\end{equation}
where $\overline D$ is an even domain containing all the translates of $D$ by $(k,0)$ with $|k|\le 4\rho n$.
\end{thm}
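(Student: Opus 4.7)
The plan is to adapt the modern RSW framework developed for random-cluster models in \cite{DST}, with some inspiration from \cite{DT19}, to the height function setting. The polynomial exponent $\rho/c$ on the right-hand side strongly suggests an iterative scheme: starting from horizontal crossings of sub-rectangles of bounded aspect ratio, one glues them to produce crossings of longer rectangles, extending the horizontal extent by an additive $\Theta(n)$ per step and paying a constant multiplicative factor in the crossing probability. After $\Theta(\rho)$ iterations, the probability is raised to a power $\Theta(\rho)$, matching the exponent in the statement.

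The first step is a base case: produce a horizontal crossing of some bounded-aspect-ratio sub-rectangle from the hypothesis on the vertical crossing of $\Lambda_{\rho n, n}$. By the $\pi/2$ rotational symmetry of $\phi^0$ under rotations of the domain, the vertical crossing of $\Lambda_{\rho n, n}$ corresponds, up to rotating $D$, to a horizontal crossing of the tall rectangle $\Lambda_{n, \rho n}$. To convert this into a horizontal crossing of a square-aspect sub-rectangle, one uses reflection through the diagonal of the square (a symmetry of $\phi^0$) together with an FKG-type combination of crossing events.

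The second step is the gluing/bootstrap lemma: given two horizontal crossings of horizontally overlapping sub-rectangles, combine them into a single horizontal crossing of the union. The topology of $\times$-paths ensures that two such crossings with sufficient overlap in the horizontal strip must meet, so that the intersection of the two crossing events implies the desired longer crossing. The combination is then effected via an FKG-type inequality applied to the relevant increasing events of the height function. Iterating this gluing $\Theta(\rho)$ times yields the horizontal crossing of $\Lambda_{\rho n, n}$ at polynomial cost; the enlarged domain $\overline D$ appears precisely because we need to translate $D$ by $O(\rho n)$ along the horizontal direction during the iteration, and $\overline D$ must contain all these translates to invoke domain monotonicity (which itself should be verified as a by-product of the FKG machinery).

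The main obstacle is establishing an appropriate FKG-type inequality for the increasing height-function events $\cH_{h\ge 2}^\times$ and $\cV_{h \ge 2}^\times$ under $\phi_D^0$. Unlike the Bernoulli or random-cluster case, where FKG is classical, the height function is confined to the set of homomorphisms (so that $|h_u - h_v| = 1$ on neighbours), which makes Holley's criterion difficult to verify directly on the joint law. I would try to circumvent this via one of three routes: a Markov-conditional argument exploiting the domain Markov property of $\phi^0$ and inductively handling increasing events on smaller sub-domains; a monotone coupling between $\phi_D^0$ and $\phi_D^k$ for shifted boundary conditions $h \equiv k$ on $\partial D$ (exploiting that the model is translation-invariant in the target $\Z$); or a transfer to an auxiliary representation (such as the loop/BKW representation or a spin model on odd vertices) in which positive association is more transparent. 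Secondary care is needed in the gluing step to confirm that two $\times$-paths with $h \ge 2$ concatenate through a shared endpoint without loss of the height lower bound, which is where the $\times$-connectivity convention is crucial.
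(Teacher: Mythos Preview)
Your proposal has a genuine gap at the gluing step, and it is precisely the step where all the real work lies. You write that ``the topology of $\times$-paths ensures that two such crossings with sufficient overlap in the horizontal strip must meet.'' This is false: two horizontal $\times$-crossings of overlapping rectangles of the same height need not intersect at all --- one can hug the top, the other the bottom. The intersection $A\cap B$ of the two crossing events, which is what FKG gives you, therefore does \emph{not} imply a crossing of the union. This is exactly the obstruction that RSW theory is designed to overcome, and it does not disappear by passing to $\times$-connectivity or by assuming ``sufficient overlap.''

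The paper's proof addresses this bridging problem head-on, and its structure is quite different from the iterative rectangle-gluing you sketch. After a square-root-trick localisation of where vertical crossings land on four horizontal lines (so that translates of a single crossing event are forced into controlled geometric configurations), the heart of the argument is a pair of conditional crossing lemmas (Propositions~\ref{prop:weak} and~\ref{prop:strong}). Each says: in a quad with boundary values $2$ on the two ``wired'' arcs and $0$ (resp.\ $1$) on the other two, and under explicit geometric hypotheses on the arcs, a crossing of $h\ge 1$ (resp.\ $h\ge 2$) occurs with probability at least $\tfrac12$. These are proved by constructing domains that are symmetric under a reflection exchanging even and even (resp.\ even and odd) vertices, and then using duality between $\cC_{h\ge m}$ and $\cC_{h\le m-1}^\times$ together with the symmetry to get the $\tfrac12$. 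Getting from the original domain to these symmetric domains requires repeatedly ``pushing'' boundary conditions in or out, and this is where the FKG inequality for $|h|$ (not merely for $h$) is indispensable: events like $\cC_{h=2}^\times$ are not monotone in $h$, but become monotone in $|h|$ or $|h-2|$ once the boundary data have the right sign. Your proposal identifies FKG for $h$ as the main obstacle and suggests three workarounds; in fact, FKG for $h$ is standard (Holley), and the genuinely new monotonicity input is Proposition~\ref{prop:FKG_modh} for $|h|$. Without a bridging mechanism of this kind, the inductive scheme you outline cannot get off the ground.
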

The previous theorem is called a RSW theorem in the sense that it bounds the probability of crossing rectangles in the `hard' direction in terms of the probability of crossing rectangles in the `easy' direction.
With a little more work, one may replace the right-hand side by a quantity that tends to 1 when the probability of a vertical crossing tends to 1. We will also see that the theorem adapts trivially to other geometry, such as the infinite strip. 

%

\subsection{Related results and open questions}\label{sec:1.2}

The uniform measure on homomorphisms was also introduced independently of the six-vertex model by
Benjamini, H\"aggstr\"om and Mossel in \cite{BHM_homo} (see also \cite{BP94} for a prior work focusing on the tree)
and further investigated in \cite{BS00,K01,LNR03,G03,BYY07,E09,P17} on arbitrary graphs (for which there is no {\em a priori} connection to square-ice). As mentioned above \cite{She05,CPT18}, the model is delocalized on $\Z^2$. In fact, the model undergoes a roughening phase transition; in \cite{P17}, it was proved that, for every $k\ge2$ and sufficiently large $d$, the height function on $\mathbb T_n\times(\Z/k\Z)^d$ is {\em localized}.

Related studies consider the behaviour of the class of integer-valued $1$-Lipschitz functions. When the base graph is the triangular lattice, delocalization and logarithmic variance has been established through a correpondence with the loop O(2) model with edge weight $x=1$ \cite{GM}. Another delocalization result is obtained in \cite{DGPS} for the height function of the loop O(2) model with weight $1/\sqrt 2$. 

It is natural to ask to which extent the techniques developed in this paper help understand the height function of the six-vertex model for different values of $c$. We believe that one of the main contributions of the paper lies in the use of the FKG inequality for $|h|$ to implement comparison between boundary conditions and obtain the RSW theory and the renormalization for crossing probabilities. This FKG for $h$ and $|h|$ are valid for every six-vertex model with $a=b=1$ and $c\ge1$. We therefore believe that an argument similar to the present paper could lead to an equivalent of Theorem~\ref{thm:dichotomy} in the regime $a=b=1$ and $c\ge1$. This would be particularly interesting since some range of $c\ge1$ corresponds to the random-cluster model with $q\in(0,1)$ which is known not to satisfy the FKG inequality as a percolation model. Unfortunately, the present techniques do not extend in a trivial fashion due to the lack of spatial Markov property when $c>1$. More precisely, take the example of an even domain. For $c=1$, the value of $h$ on $\partial V$ is sufficient to decorrelate the outside from the inside, while this is no longer the case for $c>1$ (one needs to know what are the values in diagonals as well). For this reason, we leave the following interesting problem open.
\begin{question}\label{q1}
Prove Theorem~\ref{thm:dichotomy} for the height function of the six-vertex model with $a=b=1$ and $c\ge1$.
\end{question}
Of course, we do not address the important open question of proving GFF fluctuations.
\begin{question}
Prove that the square-ice height function in an even domain $\Omega_\delta\subset\delta\Z^2$ approximating a simply connected open set $\Omega$ converges weakly to the GFF on $\Omega$ with Dirichlet boundary condition 0 on $\partial\Omega$.
\end{question}

\paragraph{Organisation of the paper} Section~\ref{sec:2} contains some preliminaries (FKG and duality properties). Section~\ref{sec:3} deals with the proof of Theorem~\ref{thm:RSW} while Section~\ref{sec:4} presents the proof of the other theorems.

 \paragraph{Acknowledgments} 
The first author is supported by the ERC CriBLaM, the NCCR SwissMAP, the Swiss NSF and an IDEX Chair from Paris-Saclay. The second author was supported in part by the European Research Council starting grant 678520 (LocalOrder), and the Zuckerman STEM leadership Postdoctoral Fellowship. The last author is supported in part by NSERC 50311-57400. This project was initiated during the visit of the last author in IHES. The authors would like to express their gratitude to IHES for its support. Finally, we thank Alex Karrila and the anonymous referee for carefully reading the manuscript.

 \section{Preliminaries}\label{sec:2}
 In this section, we gather some simple facts about homomorphisms. More precisely, the first part proves the FKG inequality while the second discusses certain connectivity issues that will be important in the following sections.

 In order to properly state these properties, we introduce a general notion of boundary condition. For $B\subset D$ with $ D$ a domain and $\kappa$ a function from $B$ into the subsets of $\Z$, define ${\rm Hom}( D,B,\kappa)$ to be the set of homomorphisms $h$ on $ D$ such that $h_v\in \kappa_v$ for every $v\in B$. { We emphasize that $B$ need not be a subset of $\partial D$ in this definition. For $a,b \in \R$, we use the notation $\kappa_v = [a,b]$ for $\kappa_v  = [a,b] \cap \Z$.} We call $(B,\kappa)$ a {\em boundary condition} and say that the boundary condition is {\em admissible} if ${\rm Hom}( D,B,\kappa)\ne \emptyset$ is finite. For admissible boundary condition $(B,\kappa)$, we set $\phi^{B, \kappa}_{ D}$ for the uniform measure on ${\rm Hom}( D,B,\kappa)$.  When $B=\partial D$, we drop it from the notation.

  \subsection{Monotonicity properties of uniform homomorphisms}\label{sec:2.1}
We call a function $F : \Z^D \mapsto \R$ \emph{increasing} if for any $h, h' \in   \Z^D$  satisfying $h_v \ge h'_v$ for all $v \in  D$, $F(h) \ge F(h')$. 

 \begin{prop}[monotonicity for $h$]\label{prop:FKG_h}
 Consider $D\subset \Z^2$ and two admissible boundary conditions $(B,\kappa)$ and $(B,\kappa')$ satisfying that for every $v\in B$, $\kappa_v=[a_v,b_v]$ and $\kappa'_v=[a'_v,b'_v]$ with $a_v\le a'_v$ and $b_v\le b'_v$ (the previous integers may be equal to $\pm\infty$), then \begin{description}
 \item[(CBC)] For every increasing function $F$, 
 $\phi^{B, \kappa'}_D[F(h)]\ge\phi^{B, \kappa}_D[F(h)]$;
 \item[(FKG)] For any two increasing functions $F,G$, $\phi^ {B, \kappa}_D[F(h)G(h)] \ge   \phi^ {B, \kappa}_D[F(h)]  \phi^ {B, \kappa}_D [G(h)]$.
 \end{description}
 \end{prop}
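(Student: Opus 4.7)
My plan is to reduce both (CBC) and (FKG) to a single structural fact: for any admissible interval-valued boundary condition, the set $\mathrm{Hom}(D,B,\kappa)$ is a sublattice of $\Z^D$ under the coordinatewise operations $\vee$ and $\wedge$. Once this is in hand, (FKG) follows from the Ahlswede--Daykin theorem applied to the uniform measure on this finite lattice, and (CBC) follows from Holley's stochastic-domination criterion.

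First I would prove the sublattice property. Fix two homomorphisms $h,h'$ on $D$ and an edge $\{u,v\}$. The bipartite parity structure of $\Z^2$ forces $h-h'$ to be even-valued at every vertex, while the homomorphism constraint gives $(h-h')(u)-(h-h')(v)=(h_u-h_v)-(h'_u-h'_v)\in\{-2,0,2\}$ since each bracket is $\pm 1$. Now if one had the ``crossing'' configuration $h_u>h'_u$ and $h_v<h'_v$ on a single edge, then by parity $(h-h')(u)\ge 2$ while $(h-h')(v)\le -2$, producing a jump of at least $4$ across that edge, a contradiction. Hence on every edge the two functions are pointwise ordered in a consistent direction, so $h\vee h'$ (respectively $h\wedge h'$) coincides edge-by-edge with whichever of $h,h'$ is larger (respectively smaller) at both endpoints, and inherits the homomorphism property. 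On boundary vertices $v\in B$ the pointwise $\max$ and $\min$ of two values in $[a_v,b_v]$ stay in $[a_v,b_v]$, so the sublattice property extends to every interval-valued boundary condition.

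Given this, (FKG) is immediate: on pairs from $\mathrm{Hom}(D,B,\kappa)$ the uniform measure $\mu$ satisfies $\mu(h\vee h')\mu(h\wedge h')=\mu(h)\mu(h')$, which is the FKG lattice condition, and Ahlswede--Daykin then yields $\phi^{B,\kappa}_D[FG]\ge\phi^{B,\kappa}_D[F]\phi^{B,\kappa}_D[G]$ for all pairs of increasing $F,G$. For (CBC), with $a_v\le a'_v$ and $b_v\le b'_v$ for every $v\in B$, I would verify Holley's condition for the pair $(\phi^{B,\kappa'}_D,\phi^{B,\kappa}_D)$: for any $h\in\mathrm{Hom}(D,B,\kappa')$ and $h'\in\mathrm{Hom}(D,B,\kappa)$ the pair $(h\vee h',h\wedge h')$ lies in $\mathrm{Hom}(D,B,\kappa')\times\mathrm{Hom}(D,B,\kappa)$, because the homomorphism property is preserved by the lattice argument above, and on $v\in B$ one has $a'_v\le\max(h_v,h'_v)\le\max(b'_v,b_v)=b'_v$ while $a_v\le\min(h_v,h'_v)\le b_v$. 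Holley's theorem then produces a monotone coupling and hence (CBC).

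The step I expect to be hardest is the sublattice property, which uses both the bipartite parity of $\Z^2$ and the rigid $\pm 1$ height constraint in an essential way. Without these --- for generic integer-valued $1$-Lipschitz functions, or homomorphisms on graphs containing odd cycles --- the crossing configuration $h_u>h'_u,\,h_v<h'_v$ on an edge can occur and the pointwise extrema of two admissible functions may leave the admissible set, so the present monotonicity mechanism would break down and a different approach would be required.
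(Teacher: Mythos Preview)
Your proof is correct and follows essentially the same route as the paper. The paper simply cites \cite[Lemma~2.2]{BHM_homo} for Holley's criterion and notes that the parity convention (even heights on even vertices) ensures irreducibility; you have unpacked this by proving the sublattice property of $\mathrm{Hom}(D,B,\kappa)$ explicitly and then invoking the global lattice form of Holley/Ahlswede--Daykin rather than the single-site version with irreducible Glauber dynamics. The key ingredient---that the parity convention forces $h-h'$ to be even and hence forbids the edge-crossing configuration---is the same in both, and your use of the global lattice condition is arguably a little cleaner since it sidesteps the irreducibility check altogether.
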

 The first property is called the {\em comparison between boundary conditions}, and the second the {\em Fortuin-Kasteleyn-Ginibre (FKG)} inequality.
\begin{proof}
These results follow from Holley's criterion,  see \cite[Theorem 4.8]{GHM01}, since our definition of height function specifies even height on the even sublattice and therefore implies irreducibility (a fact which is required for Holley's criterion). Note that the conditions on the boundary conditions are designed so that Holley's criterion holds for boundary vertices.
\end{proof}

We also crucially use monotonic properties for $|h|$ instead of $h$. In order to properly state the conditions for such an inequality, we introduce some new notation. We say that the boundary condition $\kappa$ is {\em $|h|$-adapted} if there exists a partition $B_{\rm pos}(\kappa) \sqcup B_{\rm sym}(\kappa)$ of $B$ such that
\begin{itemize}[noitemsep,nolistsep]
\item for any $v \in B_{\rm pos}(\kappa)$, {$\kappa_{v}\subset\Z_+:=\{1,2,\dots\}$};
\item for any $w \in B_{\rm sym}(\kappa)$, $\kappa_w =-\kappa_w$. 
\end{itemize}

Let ${\rm Hom}^{\ge 0}(D,B,\kappa)$ be the set of all $\xi \in {\rm Hom}(D,B,\kappa)$ with $\xi_v \ge 0$ for all $v \in D$. Let us first make the following simple observation. For any $\xi \in {\rm Hom}^{\ge 0}(D,B,\kappa)$ where $\kappa$ is a $|h|$-adapted boundary condition, let $k(\xi)$ denote the number of connected components (with regular connectivity of $\Z^2$) of the set of sites with $\xi_v \ge 1$ which do not intersect $B_{\rm pos}$. Note that the sign of $h$ is the same for each such cluster and could be either $+$ or $-$ with equal weight. Consequently
\begin{equation}
\phi^{B, \kappa}_D (|h| = \xi) =\frac{2^{k(\xi)}}{|{\rm Hom}(D,B,\kappa)|}\label{eq:simple_obs}
\end{equation}
where $|\cdot|$ in the denominator denotes the cardinality of the set. 

   \begin{prop}[monotonicity for $|h|$]\label{prop:FKG_modh}
    Consider $D\subset\Z^2$ and two admissible $|h|$-adapted boundary conditions $(B,\kappa)$ and $(B,\kappa')$ satisfying $B_{\rm pos}(\kappa) \subseteq B_{\rm pos}(\kappa')$ and for every $v\in B$, $[a_v,b_v]:=\kappa_v\cap(\Z_+ \cup \{0\}) $ and $[a'_v,b'_v]:=\kappa'_v\cap(\Z_+ \cup\{0\})$ satisfy $a_v\le a'_v$ and $b_v\le b'_v$,
\begin{description}
 \item[(CBC)] For every increasing function $F$, 
 $\phi^{B, \kappa'}_D[F(|h|)]\ge\phi^{B, \kappa}_D[F(|h|)]$;
 \item[(FKG)] For any two increasing functions $F,G$, $\phi^ {B, \kappa}_D[F(|h|)G(|h|)] \ge   \phi^ {B, \kappa}_D[F(|h|)]  \phi^ {B, \kappa}_D[G(|h|)]$.
 \end{description}
 \end{prop}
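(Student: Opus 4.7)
\medskip

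\noindent\textbf{Proof proposal.}
My plan is to verify the hypotheses of Holley's criterion for the pushforward of $\phi_D^{B,\kappa}$ under $h \mapsto |h|$, viewed as a measure on $\Z_+^D$ with the coordinate-wise partial order. This runs in parallel with the proof of Proposition~\ref{prop:FKG_h}, but requires a preliminary combinatorial description of the marginal law of $|h|$ because $|h|$ is not itself a homomorphism.

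The first step is to compute the law of $|h|$ explicitly. Given $\xi \in \Z_+^D$ in the support, I would write
\[
\phi_D^{B,\kappa}(|h|=\xi) \;\propto\; N_\kappa(\xi) \, \mathbf{1}\{\xi \text{ compatible with } \kappa\}, \qquad N_\kappa(\xi) = 2^{k_\kappa(\xi)},
\]
where $k_\kappa(\xi)$ counts the connected components of $\{w: \xi_w > 0\}$ that do not meet $B_{\rm pos}(\kappa)$. Each such component admits an independent global sign flip (the $\pm 1$-increment condition is preserved), while components touching $B_{\rm pos}$ are forced to stay positive, and the sign at vertices of $B_{\rm sym}$ is already encoded in $\xi$ by symmetry of $\kappa_w$.

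The second step is to check Holley's lattice condition for (FKG):
\[
N_\kappa(\xi \vee \eta)\, N_\kappa(\xi \wedge \eta) \;\ge\; N_\kappa(\xi)\, N_\kappa(\eta),
\]
together with compatibility of $\xi \vee \eta$ and $\xi \wedge \eta$. The latter follows from the fact that each $\kappa_v \cap \Z_+$ is an interval. The former reduces to the combinatorial submodularity $k_\kappa(\xi \vee \eta) + k_\kappa(\xi \wedge \eta) \ge k_\kappa(\xi) + k_\kappa(\eta)$, which I would establish directly by comparing the positive sets of $\xi, \eta, \xi \vee \eta, \xi \wedge \eta$ and using the parity partition of $\Z^2$ to control how components merge. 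Combined with irreducibility, this yields FKG via the standard Holley argument (cf.~\cite[Lemma~2.2]{BHM_homo}). For (CBC), I would run the same scheme with the two-measure version of Holley's criterion applied to the pair $(\phi_D^{B,\kappa}, \phi_D^{B,\kappa'})$, checking
\[
N_{\kappa'}(\xi \vee \eta)\, N_\kappa(\xi \wedge \eta) \;\ge\; N_{\kappa'}(\xi)\, N_\kappa(\eta);
\]
the hypotheses $B_{\rm pos}(\kappa) \subset B_{\rm pos}(\kappa')$ and $a_v \le a'_v,\ b_v \le b'_v$ on $\Z_+$ are exactly what is needed to keep compatibility stable under $\vee, \wedge$ and to order the counting factors correctly.

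The main obstacle is the combinatorial submodularity for $k_\kappa$: the number of positive-component sign-flip freedoms can behave subtly under pointwise max and min, and one must invoke the parity structure to rule out pathological cases where components unexpectedly merge or split. An alternative, possibly cleaner, route is to bypass the counting entirely by conditioning on the sign pattern of $h$ on each connected component of $\{|h|>0\}$, thereby reducing to a measure on signed homomorphisms with boundary values in intervals of $\Z$, and then applying Proposition~\ref{prop:FKG_h} directly; the $|h|$-adaptedness of $\kappa$ is precisely what guarantees that the conditioned boundary conditions satisfy the monotonicity hypotheses of Proposition~\ref{prop:FKG_h}. Either route delivers both (CBC) and (FKG) for $|h|$.
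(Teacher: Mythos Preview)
Your approach is correct and will go through, but it is genuinely different from what the paper does. Both arguments invoke Holley, but you opt for the \emph{global} FKG lattice condition on the pushforward weight $N_\kappa(\xi)=2^{k_\kappa(\xi)}$, whereas the paper uses the \emph{single-site} form of Holley: it fixes a vertex $v$, conditions on $|h|$ everywhere else, and checks that $\phi[|h_v|\ge k\mid |h|_{D\setminus\{v\}}=\xi]$ is nondecreasing in $\xi$ (and in the boundary condition). After reducing to the only nontrivial case where every neighbour of $v$ has $|h|=1$, the paper computes explicitly
\[
\phi_D^{B,\kappa}\big[|h_v|=0\ \big|\ |h|_{D\setminus\{v\}}=\xi\big]=\frac{2^{k(\xi)}}{2+2^{k(\xi)}},
\]
with $k(\xi)$ the number of $|h|\ge1$ clusters touching a neighbour of $v$ and avoiding $B_{\rm pos}$; since $k(\xi)$ is visibly decreasing in $\xi$ and in $B_{\rm pos}$, both (FKG) and (CBC) drop out at once. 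This sidesteps entirely the global supermodularity lemma that your route hinges on.

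That said, your supermodularity $k_\kappa(\xi\vee\eta)+k_\kappa(\xi\wedge\eta)\ge k_\kappa(\xi)+k_\kappa(\eta)$ \emph{does} hold, and your instinct about parity is exactly the mechanism: the zero set of any admissible $|h|$ consists only of even vertices, hence is an independent set, so adding a single zero vertex $z$ never removes any neighbour of $z$ from the positive set; the resulting increase in the component count is therefore monotone in the zero set, which is precisely supermodularity. The mixed version for (CBC) follows similarly once you observe that the number of components touching $B_{\rm pos}(\kappa')\setminus B_{\rm pos}(\kappa)$ but not $B_{\rm pos}(\kappa)$ can only decrease under merges. So your main route is sound, just heavier than necessary. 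Your alternative route, however, should be dropped: conditioning on the sign of each positive component fixes $h$ completely once $|h|$ is given, so there is no residual randomness to which Proposition~\ref{prop:FKG_h} could be applied, and one does not recover an FKG statement for $|h|$ that way.
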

\begin{proof}
Fix a vertex $v $. Using \cite[Theorem 4.8]{GHM01} and references therein, { it is sufficient to prove that for any $\xi$ (resp. $\eta$) which are restrictions to $D \setminus \{v\}$ of configurations  in ${\rm Hom}^{\ge 0}(D,B,\kappa)$ (resp. ${\rm Hom}^{\ge 0}(D,B,\kappa')$)  such that $\xi_v \le \eta_v$ for all $v$}, and every $k  \ge0$,
\begin{equation}
\phi_{D}^{B,\kappa}\big[|h_v| \ge k\big| |h_{|D\setminus\{v\}}|= \xi\big] \le \phi_{D}^{B,\kappa'}\big[|h_v| \ge k
 \big| |h_{|D\setminus\{v\}}|= \eta \big]\label{eq:Holley_cond}.
 \end{equation}

Suppose first that $\xi_u \neq \xi_{u'}$ for two neighbours of $u \neq u'$ of $v$. In this case because of parity constraints they must differ by $2$ and the value of $\xi_v$ is deterministic. Also if $\xi_u = m$ for all neighbours of $v$ with $m \ge 2$, the conditional distribution of $|h_v|$ on the left hand side of \eqref{eq:Holley_cond} is equally likely to be $m-1$ or $m+1$ if both $\{m-1,m+1\}$ is in $\kappa_v$(using \eqref{eq:simple_obs}). Indeed $k(\xi)$ defined in \eqref{eq:simple_obs} does not change if $\xi_v = m-1$ or $m+1$ (even if $v \in B_{\rm pos}$). On the other hand if exactly one of $m-1$ or $m+1$ is in $\kappa_v$, then $|h_v|$ is deterministically that value. Using these two facts, it is easy to verify \eqref{eq:Holley_cond} unless $\xi_u = \eta_u=1$ for all neighbours $u$ of $v$.

  Thus, let us now consider the case $\xi_u= \eta_u = 1$ for every neighbour $u$ of $v$. {Define $k_v(\xi)$  (resp. $k_v(\eta)$) to be the number of connected components in $D\setminus \{v\}$ of $\{u:\xi_u \ge 1\}$ (resp. $\{u:\eta_u \ge 1\}$) containing at least one neighbour of $v$ and that is {\em not} intersecting $B_{\rm pos}(\kappa)$ (resp. $B_{\rm pos}(\kappa'))$. Let $\cB(\kappa) $ be the event that at least one of the components of $\{u:\xi_u \ge 1\}$ containing at least one neighbour of $v$ intersects $B_{\rm pos}(\kappa)$ and similarly define $\cB(\kappa')$. Note that since $B_{\rm pos}(\kappa) \subseteq B_{\rm pos}(\kappa')$ and $\xi_v \le \eta_v$ for all $v$, $\cB(\kappa) \subseteq \cB(\kappa')$. } If $\{0,2\} \not \subseteq \kappa_v$ or $\{0,2\} \not \subseteq \kappa'_v$, the value at $v$ is deterministic and we are done by conditions on $\kappa, \kappa'$. In the other cases, it is easy to deduce from \eqref{eq:simple_obs}
\begin{equation*}
\phi_{D}^{B,\kappa}\big[|h_v| =0\big| |h_{|D\setminus\{v\}}|= \xi\big] =
\begin{cases}
0 \text{ if }v \in B_{\rm pos}\\
 2^{k_v(\xi)} /(2 + 2^{k_v(\xi)}) \text{ if  $\xi \notin \cB(\kappa)$, $v \notin B_{\rm pos}$}\\
 2^{k_v(\xi)} /(1 + 2^{k_v(\xi)})\text{ if  $\xi \in \cB(\kappa)$, $v \notin B_{\rm pos}$}
\end{cases}
\end{equation*}

and the same is true if we replace $\xi$ by $\eta$ and $\kappa$ by $\kappa'$. In the first case $v \in B_{\rm pos}$ we are immediately done by the definition of $|h|$-adapted boundary condition. Observe that $k_v(\xi) \ge k_v(\eta)$ since $\eta _v \ge \xi_v$ for all $v$. If there is strict inequality $k_v(\xi) > k_v(\eta)$ then the above expression for $\xi$ is at least that for $\eta$ in all the cases by monotonicity of $x \mapsto x/(1+x)$ and $x\mapsto x/(2+x)$ and the fact that $2^{k}/(2+2^k)\ge 2^{k'}/(1+2^{k'}) $ if $k\ge k'+1$. So assume $k_v(\xi) = k_v(\eta)$. If $\xi, \eta$ are both in $\cB(\kappa)$ or both not in $\cB(\kappa)$, we are also done for similar reasons. So the only case remaining is if $\eta \in \cB(\kappa')$ but $\xi \notin \cB(\kappa)$ (recall $\cB(\kappa) \subseteq \cB(\kappa')$). But in this case, the strict inequality $k_v(\xi) > k_v(\eta)$ must hold as there is a neighbour of $v$ whose cluster intersects $B_{\rm pos}(\kappa')$ and is not counted in $k_v(\eta)$ but every neighbour cluster is counted in $\xi$ as none of them intersect $B_{\rm pos}(\kappa)$. Thus we are back to the previous case, and the proof is complete.
%
\end{proof}
\begin{remark}The non-trivial case of the proof above is reminiscent of the proof of FKG for the FK-Ising model and the condition $B_{\rm pos}(\kappa) \subseteq B_{\rm pos}(\kappa')$ is equivalent to ``wiring'' more subsets of the boundary. In fact, the proof can be generalized to a case in which the boundary condition specifies an arbitrary `wiring' -- i.e.~forcing an arbitrary partition of the boundary to take on the same sign without choosing the particular sign. 
\end{remark}

 \subsection{Connectivity properties of lattice paths}\label{sec:2.2}
Our analysis will deal with paths of vertices in the square lattice and will crucially rely on the property that if a certain path does not connect two arcs of a quad, then there must exist a blocking path connecting the two other arcs. The study will be complicated here by the fact that these blocking paths will not necessarily be of the same kind as the original paths. We therefore gather a few technical statements to which we will refer in the next sections. 

\begin{figure}[h]
\centering
\includegraphics[width = .4 \textwidth]{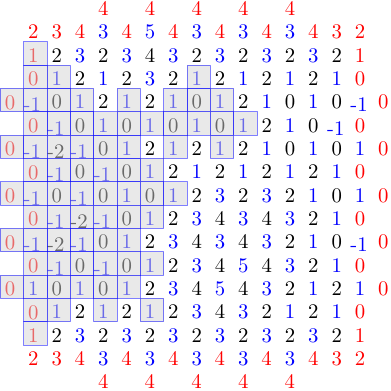}
\caption{Duality in a square. A top to bottom $h \ge 2$ $\times$-path blocks a left to right $h \le 1$ path (the cluster of $h \le 1$ containing the left boundary is shaded). This is a square with symmetric boundary condition (depicted in red) so that the top and bottom boundaries have value 4 and the left and right boundaries have value 0, appropriately modified at the corners. By this duality and symmetry, in a uniform homomorphism, a top to bottom $h \ge 2$ $\times$-path occurs with probability at least $1/2$.}\label{F:square}
\end{figure}

It will be convenient for proofs to introduce a notion of connectivity which is dual to the $\times$-paths. We will say that a path is a $*$-path if successive vertices are at graph distance exactly 2 of each other on $\Z^2$. We introduce the events 
$\cC_{h\in I}^*(D)$ with the notion of $*$-path.
The proof of the following lemma is straightforward and left as an exercise (see \cref{F:square} for an illustration).

\begin{lemma}\label{lem:duality}
For a quad $( D,a,b,c,d)$ and $m\in \Z$, we have the following properties
\begin{itemize}
\item We have  $$\cC_{h>m}^\times( D,a,b,c,d)^c=\cC_{h\le m}(D,b,c,d,a)=\cC_{h<m}^*( D,b,c,d,a)\supset \cC_{h<m}^\times( D,b,c,d,a).$$
{ If $[bc], [da]$ are paths with the same parity as $m$, the notation $\cC_{h<m}^*( D,b,c,d,a)$ means that there is a $*$-path joining two neighbours of $[bc],[da]$ inside $D$.}
\item if $(\partial D,\kappa)$ is an admissible boundary condition which satisfies $\kappa_v \subset[k,\infty]$ for each $v \in [ab] \cup [cd]$ and $\kappa_v \subset[-\infty,k]$ for each $v \in [bc] \cup [ad]$, then for any $m \ge k $, on ${\rm Hom}( D,\partial  D,\kappa)$,
\begin{align}
 \cC_{h\ge m}( D,a,b,c,d)&=\cC_{h\in\{m,m+1\}}( D,a,b,c,d)=\cC_{h=m+1}^*( D,a,b,c,d), \\
   \cC_{h\ge m}^\times( D,a,b,c,d)&=\cC_{h=m}^\times( D,a,b,c,d);
      \end{align}
      \item If $m$ is further assumed to be strictly positive, 
$$
 \cC_{|h|\ge m}( D,a,b,c,d)=\cC_{h\ge m}( D,a,b,c,d)\cup\cC_{h\le -m}( D,a,b,c,d). 
$$
\end{itemize}
\end{lemma}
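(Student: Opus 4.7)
The lemma bundles three assertions: a general planar duality for homomorphisms (Part 1), its refinement under one-sided boundary conditions (Part 2), and a sign-splitting identity for $|h|$-paths (Part 3). The plan is to treat each in turn.

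For Part 1, I would derive the first equality $\cC^\times_{h>m}(D,a,b,c,d)^c=\cC_{h\le m}(D,b,c,d,a)$ directly from the classical planar duality between $\times$- and $4$-connectivity on $\Z^2$ (viewing the quad as a topological disk), applied to $S=\{h>m\}$ and its complement $\{h\le m\}$. For the second equality $\cC_{h\le m}=\cC^*_{h<m}$ I would exploit the homomorphism structure: $h$ has a fixed parity on each of the two sublattices of $\Z^2$, so on the sublattice whose parity differs from that of $m$ the constraint $h\le m$ automatically strengthens to $h\le m-1<m$. Hence, given a $4$-path $v_0,\dots,v_n$ in $\{h\le m\}$ from $[bc]$ to $[da]$, the sub-sequence of its vertices on the opposite-parity sublattice is pairwise at graph distance $2$ and satisfies $h<m$, forming a $*$-path. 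Conversely, a $*$-path with $h<m$ extends to a $4$-path with $h\le m$ by inserting a common nearest neighbour between consecutive vertices (whose height differs from each endpoint by $1$ and is therefore $\le m$). The inclusion $\cC^*_{h<m}\supset\cC^\times_{h<m}$ is immediate since every $\times$-step is a $*$-step.

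For Part 2, I would use the boundary condition ($h\ge k$ on $[ab]\cup[cd]$, $h\le k$ on $[bc]\cup[ad]$, $m\ge k$) to show that any $4$-path in $\{h\ge m\}$ from $[ab]$ to $[cd]$ can be rerouted to stay in $\{h\in\{m,m+1\}\}$. Take such a path $P$ and a vertex $v\in P$ with $h(v)\ge m+2$: it sits in a finite $4$-connected island $C$ of $\{h\ge m+1\}$ whose outer boundary carries $h=m+1$ flanked externally by $\{h=m\}$; by the BC, $C$ cannot touch the \emph{low} arcs $[bc]\cup[ad]$, so one may detour $P$ around $C$ through the transition layer $\{h\in\{m,m+1\}\}$. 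Iterating removes all deep excursions. On the resulting $4$-path, heights alternate between $m$ (on the sublattice where $h$ has the same parity as $m$) and $m+1$ (on the other); restricting to the $(m+1)$-parity sub-sequence yields a $*$-path of constant height $m+1$, and the converse direction is again nearest-neighbour interpolation. The identity $\cC^\times_{h\ge m}=\cC^\times_{h=m}$ follows by the same rerouting argument restricted to a single sublattice.

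For Part 3, I would use that for $m>0$, $\{|h|\ge m\}=\{h\ge m\}\sqcup\{h\le-m\}$ and a $4$-path cannot cross between the two halves: consecutive heights differ by $1$, and a jump from $\ge m>0$ to $\le -m<0$ would require crossing the forbidden band $|h|<m$. Hence any $4$-path in $\{|h|\ge m\}$ lies in a single half. The principal obstacle is the innermost-path rerouting of Part 2, which requires a careful combination of the one-sided BC and a $*$-path blocker supplied by Part 1 to guarantee that every $\{h\ge m+2\}$ island can actually be shortcut; the parity extraction in Part 1's second equality also carries a subtlety when $\partial D$ and $m$ share parity, which must be handled by a small adjustment at the boundary endpoints of the extracted $*$-path. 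Everything else reduces to standard planar duality and height-parity bookkeeping.
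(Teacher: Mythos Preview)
The paper leaves this lemma as an exercise (with only Figure~\ref{F:square} as illustration), so there is no proof to compare against. Your outline for Parts~1 and~3 is correct and standard; the boundary/parity subtlety you flag in Part~1 is real but cosmetic, as the paper is tacitly using ``connects $[bc]$ to $[da]$'' in the separating sense for a $*$-path on the opposite sublattice.

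Your treatment of Part~2 has one genuine slip and one avoidable complication. The slip is in the converse direction of the second equality: you claim that a $*$-path with $h=m+1$ yields a $4$-path in $\{m,m+1\}$ by ``nearest-neighbour interpolation'', but the inserted common neighbour of two $*$-adjacent vertices with $h=m+1$ may have height $m+2$ rather than $m$ (take $h(0,0)=h(1,1)=m+1$ and $h(1,0)=h(0,1)=m+2$). What interpolation actually gives is a $4$-path in $\{h\ge m\}$; you then need to feed this back through the first identity $\cC_{h\ge m}=\cC_{h\in\{m,m+1\}}$ to land in $\{m,m+1\}$. So the two equalities in the first display are not independent in the direction you wrote.

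The complication is your island-rerouting for $\cC_{h\ge m}=\cC_{h\in\{m,m+1\}}$. It can be made to work, but the external $4$-boundary of a $4$-component is only $\times$-connected, and your islands of $\{h\ge m+1\}$ can touch the high arcs $[ab]\cup[cd]$ (only the corners $a,b,c,d$ are forced to height $k$). A much cleaner route is to use Part~1 itself: by duality, $\cC_{h\in\{m,m+1\}}(D,a,b,c,d)$ fails iff there is a $\times$-path in $\{h\le m-1\}\cup\{h\ge m+2\}$ from $[bc]$ to $[da]$; such a path starts in $\{h\le m-1\}$ (since $h\le k\le m$ on $[bc]$) and cannot jump to $\{h\ge m+2\}$ because $\times$-neighbours differ by $0$ or $2$. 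Hence it is a $\times$-path in $\{h\le m-1\}$, which by Part~1 is exactly the complement of $\cC_{h\ge m}(D,a,b,c,d)$. The identity $\cC^\times_{h\ge m}=\cC^\times_{h=m}$ follows by the same duality trick, noting that a $4$-path in $\{h\ne m\}$ starting on $[bc]$ stays in $\{h\le m-1\}$ since $4$-neighbours differ by exactly $1$.
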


\begin{remark}The last item  tells us that the existence of a $h \geq m$ crossing is {\em nearly} measurable with respect to the absolute value for any $m \geq 1$. Indeed, $|h|$ determines the connected structure of $h  \neq 0$, up to the sign of each cluster. Now, if ${\rm Hom}( D,B,\kappa)$ is chosen in a manner that determines the sign of a crossing from $[ab]$ to $[cd]$, the event becomes truly measurable with respect to $|h|$. Note that this property does not generalize to every type of connections: while $\times$-crossings of $h \geq 2$ or $h \le -2$  can be decided by $|h|$, the same is not true for $\times$-crossings of $h \geq 1$ (or $h  \le -1$) since $\times$-neighbours may have different signs.
\end{remark}

\section{Russo-Seymour-Welsh theory}\label{sec:3}
In this section, we prove Theorem~\ref{thm:RSW}. In Section~\ref{sec:3.1}, we start by presenting the proof subject to two propositions that we prove in Sections~\ref{sec:3.2} and \ref{sec:3.3}.

\subsection{Proof of Theorem~\ref{thm:RSW}}\label{sec:3.1}

We prove the result for the rectangle $\Lambda_{3\rho n,3n}$. We introduce the rectangles 
\begin{align*}
R_n^-&=[-3\rho n,3\rho n]\times[-3n,-n],\\
R_n^0&=[-3\rho n,3\rho n]\times[-n,n],\\
R_n^+&=[-3\rho n,3\rho n]\times[n,3n].
\end{align*}
For $\ep<\min\{1/11, \rho/100\}$ and $k$, we set the notations (we keep the dependence on $n$ hidden in the notations)
\begin{align*}
I_k&:=[(2k-1)\ep n, (2k+1)\ep n] \times \{-3n\},\\
J_k&:=[(2k-1)\ep n, (2k+1)\ep n] \times \{-n\},\\
K_k&:=[(2k-1)\ep n, (2k+1)\ep n] \times \{n\},\\
L_k&:=[(2k-1)\ep n, (2k+1)\ep n] \times \{3n\}.
\end{align*}
Let us start by a simple observation that will motivate our reasoning below. Set $\cA^i$ to be the event that $I_{i}$ and $I_{i+2}$ are connected by a $\times$-path of $|h|\ge2$ staying between heights $-3n$ and $3n$. Recall that $\overline D$ is an even domain containing all the translates of $D$ by $(k,0)$ with $|k|\le 4\rho n$.
 The $\pm$-symmetry and the FKG inequality for $|h|$ implies that  
\begin{equation}\label{eq:push0}
2\phi_{\overline D}^0[\cH_{h\ge2}^\times(\Lambda_{3\rho n,3n})]\ge\phi_{\overline D}^0[\cH_{|h|\ge2}^\times(\Lambda_{3\rho n,3n})]\ge \prod_{j=-\lceil3\rho/\ep\rceil-1}^{\lceil3\rho/\ep\rceil}\phi_{\overline D}^0[\cA^j]. 
\end{equation}
Furthermore, for every $i_0$ and $i$, the FKG inequality for $|h|$ implies that
\begin{equation}\label{eq:push1'}
\phi_{\overline D}^0[\cA^{i_0}]\ge\phi_{\overline D}^0[\cA^{i_0}|h_{|\partial\widetilde D^{i_0-i}}=0]=\phi_{\widetilde D^{i_0-i}}^0[\cA^{i_0}]=\phi_{\widetilde D}^0[\cA^i], 
\end{equation}
where $\widetilde D$ is the union of the translations of $D$ by $(4k\ep n,0)$ with $-2\le k\le3$ and $\widetilde D^i$ is the translate by $(2\ep i,0)$ of $\widetilde D$. The reason for introducing $\widetilde D$ will become clear after \eqref{eq:construction}. Therefore, our goal is to find a constant $c = c(\rho)$ such that for all $n \ge 1$
\begin{equation}
\max_i \phi_{\tilde D}^0 (\cA^i) \ge c\phi_{D}^0[\cV_{h\ge 2}^\times(\Lambda_{3\rho n,3n})]^{1/c} \label{eq:goal}
\end{equation}
as then for every $j$ in the product of \eqref{eq:push0}, we can apply \eqref{eq:push1'} with $i_0 =j$ and $i=$ the index which attains the max in \eqref{eq:goal} to obtain the desired result.
\bigbreak
In order to prove \eqref{eq:goal}, let $\cE_{ijk\ell}$ be the event that there is a vertical $\times$-crossing of $|h|\ge2$ in $\Lambda_{\rho n,3n}$ that starts from $I_i$ and ends at $L_\ell$, and which contains a sub-path crossing going from $J_j$ to $K_k$ in $R_n^0$. We further define $\cE_{i}$ to be the event that there is a vertical $\times$-crossing of $|h| \geq 2$ to $\mathbb{Z} \times \{3n\}$ with no further restrictions on the geometry. 

\begin{figure}[h]
\centering
\includegraphics[width = 0.5\textwidth]{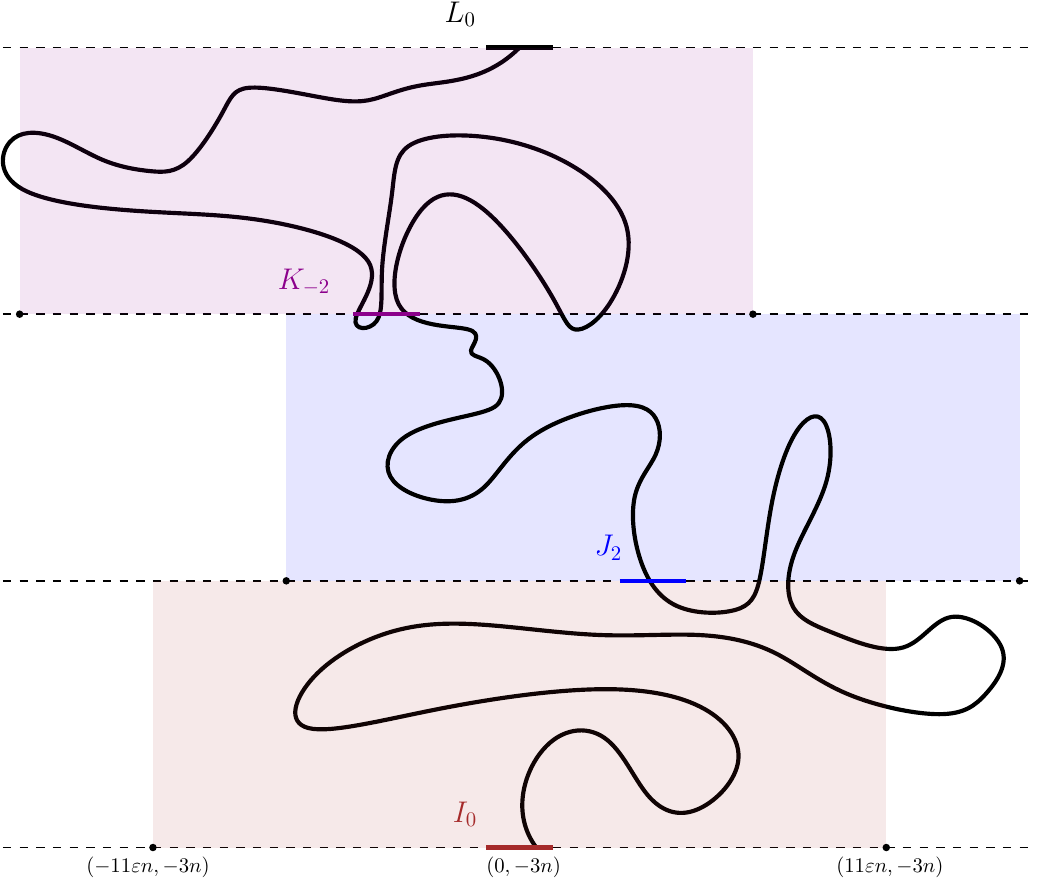}
\caption{A vertical crossing which realizes the event $\cE^{\alpha\beta\gamma}_{ijk\ell}$ with $\alpha = -, \beta = 0, \gamma = +$ and $i=0, j= 2, k= - 2, \ell = 0$.}\label{fig:eijkl}
\end{figure}

For $\alpha,\beta,\gamma\in\{-,0,+\}$, introduce the event $\cE_{ijk\ell}^{\alpha\beta\gamma}$ that $\cE_{ijk\ell}$ occurs and in the $\times$-cluster of $|h|\ge2$ in $\Lambda_{\rho n,3n}$ starting from $I_i$, one can find  (see \cref{fig:eijkl})
\begin{itemize}[noitemsep]
\item a vertical $\times$-crossing of $R_n^-$ starting from $I_i$ and staying in $[(2i-11)\ep n,(2i+11)\ep n]\times\Z$ (resp.~intersecting $\{(2i-11)\ep n\}\times\Z$  or $\{(2i+11)\ep n\}\times\Z$) if $\alpha=0$ (resp.~$\alpha=+$ or $\alpha=-$);
\item a vertical $\times$-crossing of $R_n^0$ starting from $J_j$ and staying in $[(2j-11)\ep n,(2j+11)\ep n]\times\Z$ (resp.~intersecting $\{(2j-11)\ep n\}\times\Z$  or $\{(2j+11)\ep n\}\times\Z$) if $\beta=0$ (resp.~$\beta=+$ or $\beta=-$);
\item a vertical $\times$-crossing of $R_n^+$ starting from $K_k$ and staying in $[(2k-11)\ep n,(2k+11)\ep n]\times\Z$ (resp.~intersecting $\{(2k-11)\ep n\}\times\Z$  or $\{(2k+11)\ep n\}\times\Z$) if $\gamma=0$ (resp.~$\gamma=+$ or $\gamma=-$).
\end{itemize}
The square-root trick\footnote{We prefer the use of the square-root trick to the use of the union bound since we will refer to this argument later with events having a probability close to 1. We recall that the square-root trick yields that for increasing events $\cA_,\dots,\cA_s$ and a measure $\mathbb  P$ satisfying the FKG inequality,
$$\max_{i\le s}\mathbb P[\cA_i]\ge 1-(1-\mathbb P[\cA_1\cup\dots\cup \cA_s])^{1/s}.$$} implies that there exist $i,j,k,\ell$ and $\alpha,\beta,\gamma$ such that 
\begin{equation}\label{eq:partition}\phi_D^0[\cE_{ijk\ell}^{\alpha\beta\gamma}]\ge 1-\Big(1-\phi_D^0[\cV_{|h|\ge2}^\times(\Lambda_{3\rho n,3n})]\Big)^{1/C}
,\end{equation}
where $C=C(\ep,\rho)\ge 27\lceil 3\rho/\ep\rceil^4$. From now on, we fix $i,j,k,\ell,\alpha,\beta,\gamma$ such that \eqref{eq:partition} holds, and set $\cE:=\cE_{ijk\ell}^{\alpha\beta\gamma}$ and $\bar{\cE} = \cE_{i}$ (so that $\cE \subset \bar{\cE}$, and the latter event does not restrict the geometry or the subpaths comprising the vertical crossing, except the initial intersection with $I_i$). 

If 
\begin{equation}
\max \{\phi_{\widetilde D}^0[\cA^{i-2}] , \phi_{\widetilde D}^0[\cA^{i+2}] \} \ge  \tfrac1{3}\phi_D^0[\cE]^4 \label{eq:event_max}
\end{equation}
then by \eqref{eq:partition}
\begin{equation}
(3\max_i \phi_{\widetilde D}^0[\cA_i])^{1/4} \ge 1-\Big(1-\phi_D^0[\cV_{|h|\ge2}^\times(\Lambda_{3\rho n,3n})]\Big)^{1/C}
\end{equation}
which implies \eqref{eq:goal}.

For the remainder of the proof, assume \eqref{eq:event_max} does not hold.  We also introduce the translate $\cE^k$ and $\bar{\cE}^k$ of $\cE$ and $\bar{\cE}$ by $(2k\ep n,0)$. First note that 
\begin{equation}
\phi_{\widetilde D}^0[\cE^{-2}\cap\bar{\cE}\cap\bar{\cE}^2\cap\cE^4\cap(\cA^{i-2})^c\cap(\cA^{i+2})^c] \ge \phi_{\widetilde D}^0[\cE^{-2}\cap\bar{\cE}\cap\bar{\cE}^2\cap\cE^4]  - \phi_{\widetilde D}^0[\cA^{i-2} \cup \cA^{i+2}]
\end{equation}
The FKG inequality for $|h|$ implies that, as in \eqref{eq:push0},  
\begin{equation}\phi_{\widetilde D}^0[\cE^{-2}\cap\bar{\cE}\cap\bar{\cE}^2\cap\cE^4]\ge \phi_D^0[\cE]^4.\label{eq:construction}
\end{equation}
We remark that the domain $\widetilde D$ was introduced precisely to make this inequality manifest.  Therefore the negation of \eqref{eq:event_max} implies that 
\begin{equation}
\phi_{\widetilde D}^0[\cE^{-2}\cap\bar{\cE}\cap\bar{\cE}^2\cap\cE^4\cap(\cA^{i-2})^c\cap(\cA^{i+2})^c]  \ge \frac13 \phi_D^0[\cE]^4 \label{eq:big_intersection}
\end{equation}

The rest of the proof will be devoted to the proof of the following inequality:
\begin{equation}
\phi_{\widetilde D}^0[\cA^{i}|\cE^{-2}\cap\bar{\cE}\cap\bar{\cE}^2\cap\cE^4\cap(\cA^{i-2})^c\cap(\cA^{i+2})^c]\ge \tfrac1{32}. \label{eq:max_A2}
\end{equation}
Once this inequality is established, we can conclude the proof of the theorem, since it can be combined with \eqref{eq:big_intersection} and \eqref{eq:partition} to prove \eqref{eq:goal}.

In order to prove \eqref{eq:max_A2}, we first state two propositions that will be proved in Sections~\ref{sec:3.2} and \ref{sec:3.3} respectively.  Recall the definition of even quads and given an even quad $(D,a,b,c,d)$ we define $[ab]$ to be the path in the boundary $\times$-circuit connecting $a$ and $b$ and the same for $[bc],[cd],[da]$.
\begin{proposition}\label{prop:weak}
Fix $n\ge100$ and suppose $(D,a,b,c,d)$ is an even quad with the following properties. 
\begin{itemize}
\item $a,d \in \Z \times \{n\}$, $b,c \in \Z \times \{-n\}$,
\item $[ab] $ and $[cd]$ are in $\Z \times [-n,n]$.
\item  $[bc]$ and $[da]$ are in $\Z\times\{-n-1,-n\}$ and $\Z\times\{n,n+1\}$ respectively.
\end{itemize}
Set boundary condition $\kappa $ to be $2$ on $[ab]\cup[cd]$ and $0$ on $(bc)\cup(da)$. Then 
\[
 \phi_{D}^{\kappa}[\cC_{h\ge1}(D,a,b,c,d)]\ge \tfrac12,
\]
 if $D$ is in one of the following three configurations:
\begin{enumerate}[{(}i{)}]
\item $[ab]\cup[cd]$ is contained in $\Lambda_{n/2,n}$, 
\item $[ab]$ intersects the vertical line containing $c$,
\item $[cd]$ intersects the vertical line containing $b$.
\end{enumerate}
\end{proposition}

A quad $(D,a,b,c,d)$ is called {\em mixed} if $[ab]$ and $[cd]$ are even $\times$-paths, and $(bc)$ and $(da)$ are odd $\times$-paths. Note that in this case $D$ is not quite a domain according to the definition of the introduction, we will therefore refer to it as a {\em mixed-domain}  (see \cref{fig:mixed_domain}).

\begin{figure}
\centering
\includegraphics[width = 0.3\textwidth]{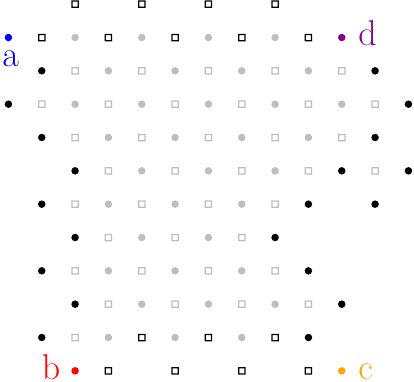}
\caption{A mixed domain; even vertices are marked with circles, while odd vertices are marked with boxes. The grey vertices indicate the interior.}\label{fig:mixed_domain}
\end{figure}

\begin{proposition}\label{prop:strong}
Fix $n\ge100$ and a mixed-quad $(D,a,b,c,d)$ with the following properties.
\begin{itemize}
\item $a,d \in \Z \times \{n\}$, $b,c \in \Z \times \{-n+1\}$. 
\item $[ab] $ and $[cd]$ are in $\Z \times \{-n+1,-n\}$. 
\item Let $H$ be the set of vertices enclosed by $[ab] $, $[cd]$, $\Z \times \{n\}$, and $\Z \times \{-n+1\}$. Then $(bc)$ and $(da)$  do not intersect $H$.
\end{itemize}
Set $\kappa$ to be the boundary condition equal to $2$ on $[ab]\cup[cd]$ and $1$ on $(bc)\cup(da)$. Then

\[
 \phi_{D}^{\kappa}[\cC_{h\ge2}^\times(D,a,b,c,d)]\ge \tfrac12,
\]
 if $D$ is in at least one of the following three configurations:
\begin{enumerate}[{(}i{)}]
\item $[ab]\cup[cd]$ is contained in $\Lambda_{n/2,n}$, 
\item $[ab]$ intersects the vertical line containing $c+(\tfrac12,0)$,
\item $[cd]$ intersects the vertical line containing $b-(\tfrac12,0)$.
\end{enumerate}
{ Here by `intersects' we mean the path obtained by linearly interpolating between the vertices intersect the vertical lines.}
\end{proposition}
We refer the reader to the top left figure of \cref{fig:new_hard_easy} for an example of a domain which satisfies the conditions of \cref{prop:strong}.

With these two propositions at hand, let us deduce \eqref{eq:max_A2}. The argument is divided in three steps: first, we transform our problem into the existence of a $\times$-crossing of $h\ge2$ in a domain with boundary condition 0/2/0/2. Then, we prove the existence of two crossings of $h\ge1$ in the `top $1/3$' and `bottom $1/3$' of the domain  using the first proposition twice. Finally, we use the second proposition to create a $\times$-crossing of $h\ge2$ in the domain enclosed by the top and bottom crossings of $1$. In each step, there is widespread  use of domain Markov property and FKG for $|h|$.
\bigbreak

Let us now come back to the proof of \eqref{eq:max_A2} which we restate here for the reader's convenience:
$$
\phi_{\widetilde D}^0[\cA^{i}|\cE^{-2}\cap\bar{\cE}\cap\bar{\cE}^2\cap\cE^4\cap(\cA^{i-2})^c\cap(\cA^{i+2})^c]\ge \tfrac1{32}. 
$$
where $\cE$ is the short form of $\cE_{ijkl}^{\alpha\beta\gamma}$ given by \eqref{eq:partition} and $\bar{\cE}$ is the short form of the (strictly larger) event that only requires $I_i$ to be connected to the top (with no additional geometric restriction).


{Let us now consider the leftmost- and rightmost-crossings which connect $I_i$ and $I_{i+2}$ to the top. We now explore $|h|$ from the left and right boundaries of $\Lambda_{3\rho n,3n}$ until we find these outermost crossings. We orient these paths so that the leftmost-crossing connecting $I_i$ to the top begins at $\mathbb{Z} \times \{3n\}$, and continues along the counterclockwise-most available edge; it is straightforward to see that the continuous path induced by this orientation may be self-touching, but will not self-cross. We orient the rightmost-crossing connecting $I_{i+2}$ to the top starting at $\mathbb{Z} \times \{-3n\}$, with the same counterclockwise-most convention. This defines the $\times$-path $[ab]$ (on the left) and $[cd]$ (on the right). We also explore $|h|$ on the complement of $\Lambda_{3 \rho n, 3n}$. If the exploration fails (i.e. there is no such vertical crossing connecting the appropriate intervals), we explore $|h|$ on all vertices of $\tilde{D}$. We now define $\cF$ to be the $\sigma$-algebra generated by this exploration.

We observe that $\cE^{-2}\cap(\cA^{i-2})^c$ is $\cF$ measurable. To see this, suppose that there is a $\times$-path of $|h| \ge 2$ that connects $[ab]$ to $I_{i-2}$. Then the event $\cA_{i-2}$ has occurred. If no such path exists, then {\em all} the connected components of $|h| \ge 2$ that intersect $I_{i-2}$ must be revealed by the exploration, and all are separated from $[ab]$ by a $|h| \leq 1$ path from $I_{i-1}$ to $\mathbb{Z}\times \{3n\}$. In this scenario, we can determine whether there are paths connecting $I_{i-2}$, $J_{j-2}$, $K_{k-2}$, and $L_{\ell-2}$ with the geometric constraints required by the index triplet $\alpha\beta\gamma$, and thus $\cE^{-2} \cap (\cA^{i-2})^c$ is $\cF$-measurable. Similarly, $\cE^{4} \cap (\cA^{i+2})^c$ is $\cF$-measurable.

\begin{remark}\label{remark:geometry}
We digress a bit with a crucial remark about the geometry of the paths $[ab],[cd]$ and their relationship with \cref{prop:weak,prop:strong}.
When $\cE^{-2}\cap(\cA^{i-2})^c$ and $\cE^{4}\cap(\cA^{i+2})^c$ occur, the geometry of the paths $[ab]$ and $[cd]$ are severely constrained. Let us first discuss the restriction of the geometry to $R_n^-$ and its behavior given the value of $\alpha$ (an analogous statements hold for $R_n^+$ and $\gamma$). If $\alpha = 0$, it is immediate from the definition that {\em every} subpath of $[ab]$ and $[dc]$ that crosses $R_n^-$ must stay inside $\Lambda_{n/2,n} + (2i \eps n, -3n)$ --- that is, the rectangle of height $n$ and width $n/2$, centered around $I_i$. Let us move on to the case $\alpha = +$. Set $\bar{d}$ to be the final intersection of $[dc]$ and $\mathbb{Z} \times \{-n\}$ (note that the path is oriented from top-to-bottom). If $\alpha = +$, the path $[\bar d c]$ must intersect the vertical line containing $b$, and thus also the line containing $b + (\tfrac12,0)$ (since it is part of the rightmost path and there is a path to the right of it which intersects $(2i-11)\ve n$). A similar statement holds for the path $[\bar{a} b]$, defined analogously, and the vertical line containing $c$ when $\alpha = -$. These constraints will be useful later when we apply \cref{prop:weak}, in particular the fact that one of the items $(i)$-$(iii)$ there must occur for $[ab],[cd]$.

For the $R_n^0$ case, we must be more careful in defining the appropriate crossings. Let us consider subpaths of $[ab]$ and $[dc]$ which cross $R_n^0$ from  $\cup_{x=-2}^4J_{j+x} $ to $\cup_{x=-2}^4 K_{k+x}$ remaining inside $\Z \times [-n,n]$. Topological constraints force such subpaths to exist if $\cE^{-2}\cap(\cA^{i-2})^c$ and $\cE^{4}\cap(\cA^{i+2})^c$ occurs (recall $\cE^k$ is a translate of $\cE_{ijkl}$ which forces a subpath to connect $J_{j-2},K_{k-2}$ and $J_{j+4},K_{k+4}$ inside $\Z \times [-n,n]$). Take one such subpath  and we denote the $b'$ and $c'$ to be the intersection of these paths with $\mathbb{Z} \times \{-n\}$. If $\beta =0$, both of these subpath are included in $\Lambda_{n/2,n} +(2j \eps n, -n)$. Indeed, note that on $\cA_{i-2}^c \cap \cA_{i+2}^c$, there exist $|h| \le 1$ crossings on both sides of these subpaths, and if $\beta=0$ these $|h| \le 1$ crossings are `almost vertical' by design which forces these subpaths to stay inside the said thin rectangle. For the same reason, if $\beta = +$, the subpath of $[dc]$ must intersect the vertical line containing $b'$; the analogous statement holds for $\beta = -$, the subpath of $[ab]$, and $c'$. Therefore, for such crossings, one of the items $(i)$-$(iii)$ in \cref{prop:strong} must occur. We emphasise that these topological constraints are forced on any such subpath of $[ab]$ and $[dc]$. Later on we will choose two special such crossings which are `closest' to each other in some sense to define the domain on which we apply \cref{prop:strong}.
\end{remark}

We define $\Omega_0$ to be the domain enclosed by these two paths and the two even $\times$-paths $[bc]$ and $[da]$ between $b$ and $c$ and $d$ and $a$ in $\Z\times\{-3n,-3n-1\}$ and $\Z\times\{3n,3n+1\}$, respectively. We observe that, thanks to the convention chosen above, $(\Omega_0,a,b,c,d)$ is a quad. Setting $B = \partial \Omega_0\cup\Omega_0^c$, we let $\xi$ denote the value of $|h|$ on $B$.


}

Let us come back to the proof of \cref{eq:max_A2}. The conclusion of the above discussion about measurability of the exploration is that it is now sufficient to show that for every  $\xi \in \cE^{-2}\cap\bar{\cE}\cap\bar{\cE}^2\cap\cE^4\cap(\cA^{i-2})^c\cap(\cA^{i+2})^c$, 
\[
\phi_{\tilde{D}}^{B,\xi} [ \cA^i] \geq 1/32.
\] 

 Let $(B,\xi_0)$ be the $|h|$-adapted boundary condition equal to $|h|=2$ on $[ab]\cup[cd]$ and 0 on the rest of the even vertices in $B$.
The comparison between boundary conditions for $|h|$ shows that,
\begin{equation}\label{eq:sym1}
\phi_{\widetilde D}^{B,\xi}[\cA^{i}]\ge \phi_{\widetilde D}^{B,\xi_0}[\cA^{i}]\ge\tfrac14 \phi_{\Omega_0}^{\kappa_0}[\cC_{h\ge2}^\times(\Omega_0,a,b,c,d)],
\end{equation}
where $\kappa_0$ is the boundary condition on $\Omega_0$ equal to $2$ on $[ab]\cup[cd]$ and 0 on $(bc)\cup(da)$. To see the final inequality, we begin by noting that, under the assumed boundary conditions,
 $\cC_{|h|\ge2}^\times(\Omega_0,a,b,c,d)$ implies $\cA^i$. We then observe that
 \[
 \phi_{\widetilde D}^{B,\xi_0} [\cC_{|h|\ge2}^\times(\Omega_0,a,b,c,d)] \geq  \phi_{\widetilde D}^{B,\xi_0} [\cC_{|h|\ge2}^\times(\Omega_0,a,b,c,d) | h|_{[ab]}=2 , h|_{[cd]}=2] \cdot  \phi_{\widetilde D}^{B,\xi_0} [ h|_{[ab]}=2 , h|_{[cd]}=2].
 \]
From the domain Markov property, the first probability above is exactly $\phi_{\Omega_0}^{\kappa_0}[\cC_{h\ge2}^\times(\Omega_0,a,b,c,d)]$. Meanwhile, $\phi_{\widetilde D}^{B,\xi_0} [ h|_{[ab]}=2 , h|_{[cd]}=2]$ is either $1/2$ or $1/4$, depending on whether the left and right boundary are connected by a $|h| \geq 1$ path; regardless, the probability is at least $1/4$. 

%
%
%
%
%

\begin{figure}
\centering
\includegraphics[width = 0.8\textwidth]{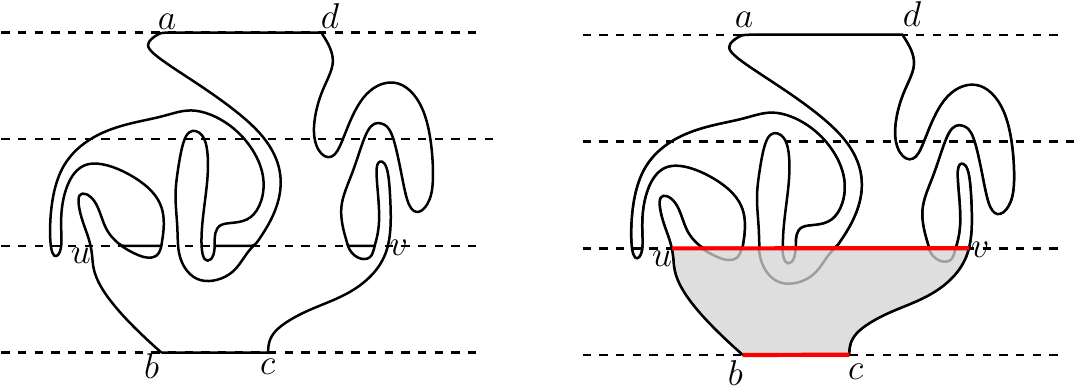}
\caption{The surgery in the definition of $\Omega_-$ (shaded grey). The black paths are even $\times$-paths with value 2 and the red paths are even $\times$-paths with value 0.}
\end{figure}

Overall, we see that, in order to conclude the proof, it is sufficient to show that, for any realization of $\Omega_0$,
\begin{align} \phi_{\Omega_0}^{\kappa_0}[\cC_{h\ge2}^\times(\Omega_0,a,b,c,d)]\ge \tfrac18.\end{align}
{Let $u$ and $v$ be the vertices of $[ab]$ and $[cd]$ which are the last vertices of $\Z \times \{-n\}$ which they intersect respectively (oriented top to bottom). Consequently, $[ub]$ and $[cv]$ are vertical crossings of the strip between height $-3n$ and $-n$ and let $\Omega_-$ be the part of $\Z^2$ enclosed by  $[ub] \cup [bc] \cup [cv]$ and the even $\times$-path $[vu]$ between $u$ and $v$ in $\Z\times\{-n,-n+1\}$ with boundary condition $\kappa_-$ equal to 0 on the bottom and top arcs, and 2 on the rest of the boundary. We now write down a chain of inequalities, which is a common theme in the rest of the proofs. Essentially, the logic is a combination of exploiting measurability of events with respect to absolute value of $h$ and modifying the geometry of domains using FKG for $|h|$ or simply $h$ as appropriate. We first write the chain of inequalities, and provide the explanations for each line below.
\begin{align}
\phi_{\Omega_0}^{\kappa_0}[\cC_{h\ge 1}(\Omega_-,u,b,c,v)]&\ge \phi_{\Omega_0}^{\kappa_0}[\cC_{h\ge 1}(\Omega_-,u,b,c,v)|h_{|\partial\Omega_-\setminus\partial\Omega_0}=0]  \nonumber \\
&=\phi_{\Omega_0}^{\kappa_0}[\cC_{h=2}^*(\Omega_-,u,b,c,v)|h_{|\partial\Omega_-\setminus\partial\Omega_0}=0]\nonumber \\
&\ge\phi_{\Omega_-}^{\kappa_-}[\cC_{h= 2}^*(\Omega_-,u,b,c,v)]\nonumber\\
&=\phi_{\Omega_-}^{\kappa_-}[\cC_{h\ge 1}(\Omega_-,u,b,c,v)]\label{eq:pushing away}.
\end{align}
\begin{itemize}
\item In the first line, we use FKG for $|h|$. Indeed, with the boundary conditions $\kappa_0$, horizontal crossing by a path of $h \ge 1$ is equivalent to crossing by $|h| \ge 1$ (recall that, since the path has regular $\Z^2$ connectivity, its existence is measurable with respect to absolute value). Also notice that $\kappa_0$ is $|h|$-adapted with top and bottom arcs being $B_{\rm sym}$ (with $\kappa \equiv \{0\}$) and the left and right arcs being $B_{\rm pos}$ (with $\kappa \equiv \{2\}$). As per \cref{prop:FKG_modh}, we can introduce new vertices with boundary value 0, thereby decreasing the probability of the crossing, since $|h| \ge 1$ is an increasing event in absolute height value.
\item The equality in the second line simply follows from \cref{lem:duality}.
\item For the third inequality, we note that the map $h \mapsto 2-h$ send a $h=2$ $*$-path to a $h=0$ $*$-path while exchanging the $2$'s and $0$'s on the boundary. Now, we notice that we have a $|h|$-adapted boundary condition and the crossing by a $*$-path of $h=0$ is actually a decreasing function of $|h|$. Hence, we can remove the optimal conditioning 0 on $\partial \Omega_0 \setminus \partial \Omega_- $ and make them `free' ($\kappa \equiv \Z$) thereby decreasing the probability. Globally adding 2 and flipping the sign, we get the required probability in the third line. For later use of a similar step, we will simply say `by FKG for $|h-2|$, we get the third inequality by pushing away the $h=2$ boundary'.
\item The fourth line is \cref{lem:duality}, again.
\end{itemize}
}

Thanks to Remark~\ref{remark:geometry}, we know that $\Omega_-$ satisfies the geometry required in Proposition~\ref{prop:weak} (exactly which of the three conditions occurs depends on the value of $\alpha$, as described in the remark). Thus, the proposition allows us to deduce that, with probability $1/2$, there is a crossing of $h\ge1$ from $[ub]$ to $[cv]$ in $\Omega_-$. 
One can do the same in a domain $\Omega_+$ defined in a similar fashion in the strip $\Z\times[n,3n]$, and FKG for height implies that both crossings occur with probability at least $1/4$.
\bigbreak
We now assume that the event $\cC( \Omega_-, u, b, c, v)$ and the analogous event for the top domain occur in $\Omega_0$. By Lemma~\ref{lem:duality}, this implies that $\Omega_-$ and $\Omega_+$ both contain a $\times$-crossings of $h=1$ from $[ub]$ to $[cv]$. {Further observe that, because of the boundary conditions, one can find such crossings inside $\Omega_0$.}
 Condition on the bottom-most and top-most such $\times$-crossings of $h=1$ and let $\Omega_1$ be the subdomain of $\Omega_0$ enclosed between these paths (we need to further explore inside $\Omega_0$ to find these paths, and orient them appropriately to obtain a quad). We denote by $\kappa_1$ the boundary condition induced by the conditioning, which is 2 on the even vertices of the boundary and 1 on the odd ones. Let $[a'b']$ and $[d'c']$ be subpaths of $[ab]$ and $[dc]$ such  that 
 {
\begin{itemize}[noitemsep,nolistsep]
\item are contained in  $\Z\times[-n,n]$, 
\item are crossing $\Z\times[-n,n]$ from $\cup_{x = -2}^4 J_{j+x}$ to $\cup_{x= -2}^4 K_{k+x}$ from top to bottom,
\item are such that $[a'b']$ is on the left of $[d'c']$ and there is no additional crossing of $\Z\times[-n,n]$ in $[ab]\cup[cd]$ in between.
\end{itemize}
}
\begin{figure}
\centering
\includegraphics[width = \textwidth]{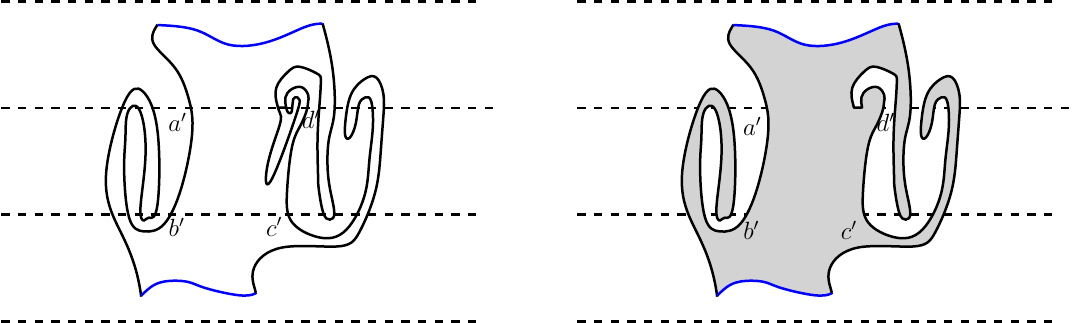}
\caption{The black paths are even $\times$-paths with value 2 and the blue paths are odd $\times$-paths with value 1. Left: The domain $\Omega_1$. Right: The domain $\Omega_2$ in shaded grey obtained after the surgery. }\label{surgery2}
\end{figure}
The existence of such paths is easy to see. Indeed, the first item and third items are obvious and the second item is already explained in \cref{remark:geometry}.  Let $\Omega'_1$ be the vertices of $\Z\times[-n,n]$ between $[a'b']$ and $[c'd']$. Let $\Omega_2$ be the union of the vertices of $\Omega_1$ and $\Omega_1'$, as well as all the vertices surrounded by them (the gray domain in \cref{surgery2}).
In words, $\Omega_2$ corresponds to cutting all the ``tongues'' of $[ab]$ and $[cd]$ entering $\Omega'_1$ by ``pushing them away''. Note that, since we took $[a'b']$ and $[c'd']$ to be successive crossings, none of these tongues crosses $\Omega'_1$.

We now make the observation that in the bottom strip, no part of the boundary with $h=1$ is affected by this procedure. Consequently, we have that $h \geq 2$ on $\partial \Omega_1 \setminus \partial \Omega_2$ (this is crucial for the FKG application later). Indeed, on the one hand any boundary vertex with $h=1$ is connected to $\Z\times\{-3n\}$ or $\Z \times \{3n\}$ by a path staying in $\Omega_0\cap(\Z\times[-3n,-n])$ (or $\Omega_0\cap(\Z\times[n,3n])$).  On the other hand, by definition $\partial \Omega_1 \setminus \partial \Omega_2\subset\Z\times[-n,n]$ and any vertex in $\Omega_2 \setminus \partial \Omega_1'$ is disconnected from $\Z\times\{-3n,3n\}$ by $\partial \Omega_1'$. 
Let $\kappa_2$ be the boundary condition equal to 2 on even vertices of $\partial\Omega_2$, and 1 on odd ones.  

Exactly as for \eqref{eq:pushing away}, FKG for $|h-2|$ enables us to push away the $h=2$ boundary condition to get that 
\begin{align}
\phi_{\Omega_1}^{\kappa_1}[\cH_{h\ge2}^\times(\Omega_1,a,b,c,d)]&=\phi_{\Omega_1}^{\kappa_1}[\cH_{h=2}^\times(\Omega_1,a,b,c,d)]\nonumber\\
&=\phi_{\Omega_2}^{\kappa_2}[\cH_{h=2}^\times(\Omega_1,a,b,c,d)|h_{|\partial\Omega_1\setminus\partial\Omega_2}=2]\nonumber\\
&\ge\phi_{\Omega_2}^{\kappa_2}[\cH_{h=2}^\times(\Omega_2,a,b,c,d)]\nonumber\\
&=\phi_{\Omega_2}^{\kappa_2}[\cH_{h\ge 
2}^\times(\Omega_2,a,b,c,d)]\nonumber\\
& \ge \phi_{\Omega_2}^{\kappa_2}[\cH_{h\ge 
2}^\times(\Omega_2,a',b',c',d')].
\end{align}
Let us elaborate a bit. In the first equality, we used the second item of \cref{lem:duality}, the second equality is simply using the spatial Markov property, the first inequality is FKG for $|h-2|$ and inclusion (a horizontal crossing of $\Omega_2$ guarantees a horizontal crossing of $\Omega_1$ because of the boundary condition). The last equality is again the second item of \cref{lem:duality} and the final inequality is simply inclusion.

Now, let $\Omega_3$ be the mix-domain composed of $\Omega_2$  together with the odd vertices outside $\Z\times[-n,n]$ that are on the exterior boundary of $\Omega_2$ (meaning that they do not belong to the set but are neighbours of a vertex belonging to the set). 
If $\kappa_3$ is the boundary condition on $\Omega_3$ equal to 2 on $[a'b']\cup[c'd']$ and 1 on $(b'c')\cup(d'a')$. We see that
\begin{align}\label{eq:pushing away odd}
\phi_{\Omega_2}^{\kappa_2}[\cH_{h\ge
2}^\times(\Omega_2,a',b',c',d')]&=\phi_{\Omega_3}^{\kappa_3}[\cH_{h\ge
2}^\times(\Omega_2,a',b',c',d')|h_{|\partial\Omega_2\setminus\partial\Omega_3}=2]\nonumber\\
&\ge \phi_{\Omega_3}^{\kappa_3}[\cH_{h\ge2}^\times(\Omega_3,a',b',c',d')].
\end{align}
{The inequality is clear using FKG for $h$, since the boundary condition on $\partial\Omega_2\setminus\partial\Omega_3 =2$ is the highest possible height one can put on these vertices while maintaining the constraint forced by $\kappa_3$. }


As explained in the second paragraph of \cref{remark:geometry}, we now have a quad which is in the first configuration of \cref{prop:strong} if $\beta=0$ (resp.~second if $\beta=+$, third if $\beta=-$). We deduce that 
$$\phi_{\Omega_3}^{\kappa_3}[\cH_{h\ge2}^\times(\Omega_3,a',b',c',d')]\ge\tfrac12,$$ which together with \eqref{eq:pushing away odd}, concludes the proof of \eqref{eq:max_A2}, and consequently that of \cref{thm:RSW}. 
\subsection{Proof of Proposition~\ref{prop:weak}}\label{sec:3.2}

We prove the first two cases of the proposition; the third can be proven analogously to the second.

\paragraph{First case.}

Let $\Lambda_n^{\rm even}$ be the set of vertices inside (or on) the even circuit in $\Lambda_{n+1}\setminus\Lambda_{n-1}$ surrounding the origin.
Consider the boundary condition $\xi$ on $\Lambda_n^{\rm even}$ equal to $2$ on left and right (including vertices on $y=x$), and $0$ on the rest.
By the second item of Lemma~\ref{lem:duality}, and FKG for $|h-2|$ (the reasoning is the same as in \eqref{eq:pushing away}), we find that
\begin{align}
 \phi_{D}^{\kappa}[\cC_{h\ge1}(D,a,b,c,d)]=  \phi_{D}^{\kappa}[\cC_{h=2}^*(D,a,b,c,d)]
  &\ge \phi_{\Lambda_n^{\rm even}}^{\xi}[\cH_{h=2}^*(\Lambda_n^{\rm even})]=\phi_{\Lambda_n^{\rm even}}^{\xi}[\cH_{h\ge1}(\Lambda_n^{\rm even})].\label{eq:pushing away ge1}
\end{align}
Now, using the first item of Lemma~\ref{lem:duality} again, we find that
\begin{align}
\phi_{\Lambda_n^{\rm even}}^{\xi}[\cH_{h\ge1}(\Lambda_n^{\rm even})]&=1-\phi_{\Lambda_n^{\rm even}}^{\xi}[\cV_{h\le0}^\times(\Lambda_n^{\rm even})]\nonumber\\
&\ge 1-\phi_{\Lambda_n^{\rm even}}^{\xi}[\cV_{h\le 1}(\Lambda_n^{\rm even})],\nonumber\\
&\ge 1-\phi_{\Lambda_n^{\rm even}}^{\xi}[\cH_{h\ge1}(\Lambda_n^{\rm even})].\label{eq:sym}
\end{align}
\begin{figure}[h]
\centering
\includegraphics[width = 0.6 \textwidth]{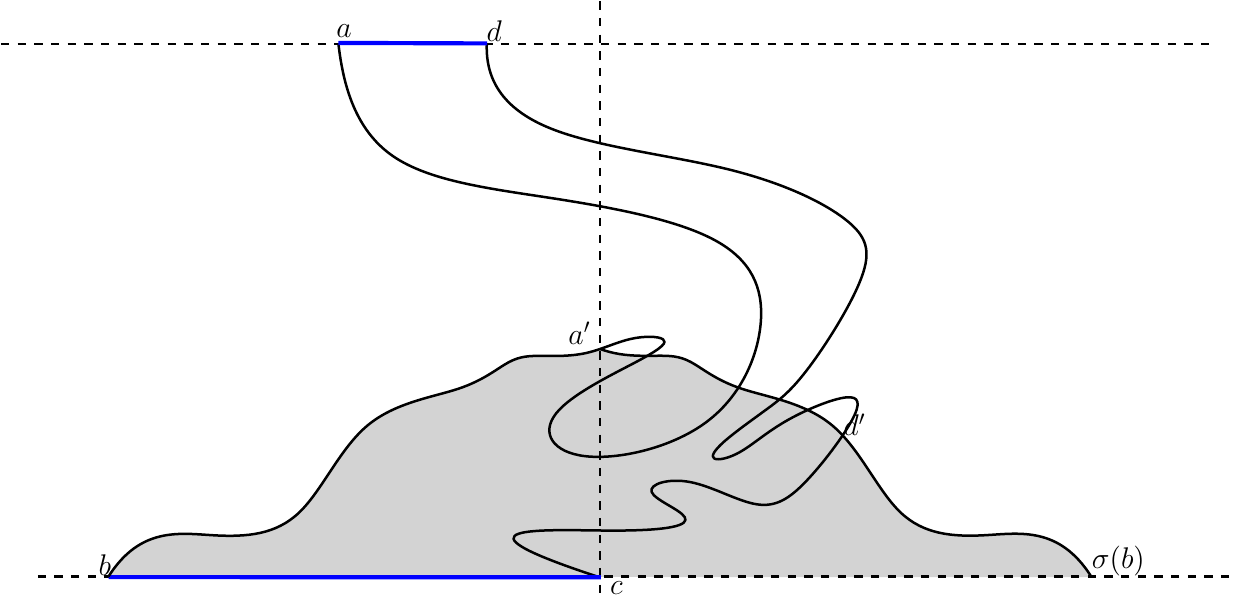}
\caption{The black paths are even paths with value 2 and the blue paths are odd paths with value 1. The symmetric domain $\Omega$ is shaded grey.}
\end{figure}

In order to deduce the last inequality, we used the FKG inequality for $h$ and the symmetry of $\Lambda_n^{\rm even}$ by $\pi/2$ rotation and the fact that the boundary condition $\xi'$ obtained by rotating and applying the transformation $2-h$ is smaller than the boundary condition $\xi$. Overall, \eqref{eq:sym} implies that $\phi_{\Lambda_n^{\rm even}}^{\xi}[\cH_{h\ge1}(\Lambda_n^{\rm even})]\ge\tfrac12$, which concludes the proof.

\paragraph{Second case.} Let $\ell$ be the vertical line passing by $c$. Assume that $[ab]$ and $[cd]$ do not intersect --- otherwise, there is nothing to prove.  
Denote the reflection with respect to the line $\ell$ by $\sigma$, and note that $\sigma$ maps the even lattice to itself.
Let $a'$ be the first intersection of $[ab]$ when going from bottom to top (i.e.~when going from $b$ to $a$) with $\ell$ and let $\Omega$ be the quad enclosed by $[a'b]$, $\sigma([a'b])$ and the $\times$-path of even vertices in $\Z\times\{-n-1,-n\}$ between $b$ and $\sigma(b)$. By definition, $(\Omega,b,c,\sigma(b),a')$ is symmetric under $\sigma$. 

{The idea is to apply the ideas of \eqref{eq:pushing away ge1}, \eqref{eq:sym}, but with the even domain $(\Omega,b,c,\sigma(b),a')$ playing the role of the `square'  domain $\Lambda^{\rm even}_n$ with boundary condition 2 on $[a'b]\cup[c\sigma(b)]$ and $0$ on $(bc)\cup(\sigma(b)a')$. The symmetry  about $\pi/2$-rotation for the `square' domain is replaced by reflection about the line $\ell$ for $(\Omega,b,c,\sigma(b),a')$. Apart from these differences, the proof is analogous to the first case. We provide the details below.}
\medskip

Let $D'$ be the domain bounded by $[a'b]$, $[bc]$, $[cd'],$ where $d'$ the first intersection of $[cd]$ with $\sigma([a'b])$, and $[d'a']$, which is a segment in $\sigma([a'b])$.

{First observe that, because of boundary conditions and inclusion, we have the following inequality
$$
\phi_D^\kappa[\cH_{h\ge 1}(D,a,b,c,d)]\ge \phi_D^\kappa[\cH_{h\ge 1}(D',a',b,c,d')].
$$}
Now applying FKG inequality to $|h|$ (note that the event $\cH_{h\ge 1}(D')=\cH_{|h|\ge 1}(D')$ is increasing in terms of $|h|$), we find that, like in \eqref{eq:pushing away}, 
 \begin{align*}
 \phi_D^\kappa[\cH_{h\ge 1}(D',a',b,c,d')]&\ge\phi_{D}^{\kappa}[\cH_{h\ge1}(D',a',b,c,d')|h_{|\partial D'\setminus\partial D}=0]\\
&= \phi_{D'}^{\kappa'}[\cH_{h\ge 1}(D',a',b,c,d')],\end{align*} 
where $\kappa'$ is the boundary condition equal to $2$ on $[a'b]\cup[cd']$, and 0 on $(bc)\cup(d'a')$. Now, following a reasoning similar to \eqref{eq:pushing away ge1} and then \eqref{eq:sym}, we find that 
\begin{align*}
 \phi_{D'}^{\kappa'}[\cH_{h\ge 1}(D',a',b,c,d')]
 \ge \phi_{\Omega}^{\xi}[\cH_{h=2}^*(\Omega,a',b,c,\sigma(b))]\ge \tfrac12,
\end{align*}
where $\xi$ is the boundary condition equal to 2 on $[a'b]\cup[c\sigma(b)]$ and $0$ on $(bc)\cup(\sigma(b)a')$.

\subsection{Proof of Proposition~\ref{prop:strong}}\label{sec:3.3}
Again, we prove the first two cases of the proposition, as the third is analogous to the second.

Before embarking on the proof of the first case, we recall that the proof of~\cref{prop:weak} used symmetries that mapped even vertices to even vertices, and and symmetric even-domains with boundary condition that are made of 0s and 2s. In this section, we will use symmetries that are mapping even vertices to odd vertices, and symmetric mix-quads with boundary condition that are made of 1s and 2s. Furthermore, unlike the proofs in Section~\ref{sec:3.2}, we are not allowed to `push in' boundary condition larger than or equal to 1 on top and bottom, because $\times$-crossings of $h \ge1$ cannot be transformed into increasing events in $|h|$. We therefore need to symmetrize the domain $D$ by pushing boundary condition 2 away only.

\paragraph{First case.}  Let $\Lambda_n^{\rm mix}$ be the domain enclosed between (see the bottom figure in \cref{fig:new_hard_easy})
\begin{itemize}
\item the even $\times$-paths in $\{n,n+1\}\times\Z$ connecting $(n,n)$ and $(n, -n)$ (call it $[vu]$) and the even times path in $\{-n,-n+1\}\times\Z$ connecting the vertices $(-n+1,-n+1)$ and $(-n+1,n+1)$ (call this path $[sr]$)
\item  The two odd paths obtained by rotating $[uv]$  and $[rs]$ by $\pi/2$ about the point $(\tfrac12,\tfrac12)$.
\end{itemize}
This represents the `mixed square domain', which is necessary to maintain the correct parity in the boundary conditions later in the proof. 
\begin{figure}[h]
\centering
\includegraphics[width = 0.8\textwidth, page =2]{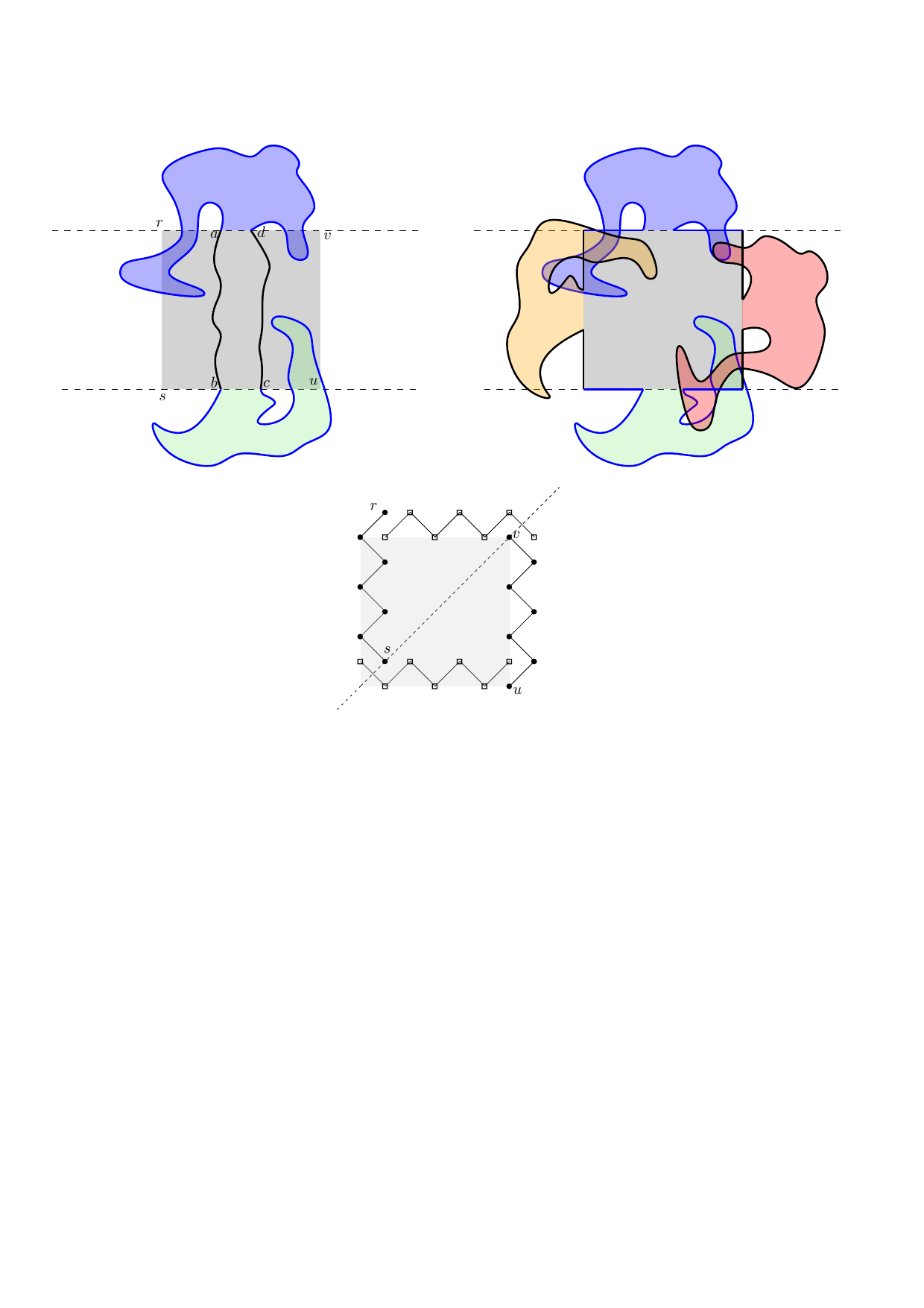}
\caption{Top figures: Blue paths are odd paths with $h=1$ and black paths are even paths with $h=2$. Left: The domain $D$ with the square $\Lambda_n$ in shaded grey. Right: The symmetric domain $\Omega$.  Bottom: The symmetric `mixed square'; the circles are even vertices, and the boxes are odd vertices.}\label{fig:new_hard_easy}
\end{figure}

Let $U_{\rm top}$ be the domain enclosed by the (odd) $\times$-path $[da]$ in the quad $D$ and the (odd) $\times$-path from $a$ to $d$ in $\Z\times\{n,n+1\}$ (the shaded blue domain in \cref{fig:new_hard_easy}). We define $U_{\rm right}$ as the rotation of $U_{\rm top}$ by $-\pi/2$ about the point $(\tfrac12,\tfrac12)$ (shaded red in \cref{fig:new_hard_easy}). Similarly, we define $U_{\rm bottom}$ and $U_{\rm left}$ to be rotation of $U_{\rm bottom}$ by $-\pi/2$ about the point $(\tfrac12,\tfrac12)$  (shaded light green and yellow ochre respectively in \cref{fig:new_hard_easy}). Notice that it is possible to define this domain because of the geometric constraints on the paths $[rs],[uv]$ specified by the first case, namely these paths are contained in the square $\Lambda_{n/2,n}$.

We now introduce  
\[
\Omega = \Lambda_n^{\rm mix} \biguplus U_{\rm top} \biguplus U_{\rm left} \biguplus U_{\rm bottom} \biguplus U_{\rm  right},
\]
where $\biguplus$ denotes the disjoint gluing of the graphs along the top and bottom parts of $\Lambda_n^{\rm mix}$. Note that $\Omega$ is planar but may not be properly embeddable in an isometric fashion in $\R^2$, and that it contains a natural copy of the graph $D$. The graph $\Omega$ also satisfies symmetry with respect to the rotation operation above. Let $\xi$ be the boundary condition on $\Omega$ equal to 2 on even vertices of $\partial\Omega$ and 1 on odd ones. 

 Using the same reasoning as in \eqref{eq:pushing away}, we find that 
 \begin{align}
\phi_{D}^{\kappa}[\cC_{h\ge2}^\times(D,a,b,c,d)]
&\ge\phi_{\Omega}^{\xi}[\cC_{h\ge2}^\times(\Omega,r,s,u,v)].\end{align}
It remains to observe that by the first item of Lemma~\ref{lem:duality}, 
\begin{align}
\phi_{\Omega}^{\xi}[\cC_{h\ge2}^\times(\Omega,r,s,u,v)]&=1-\phi_{\Omega}^{\xi}[\cC_{h\le1}(\Omega,v,r,s,u)]\nonumber\\
&\ge 1-\phi_{\Omega}^{\xi}[\cC_{h\le1}^\times(\Omega,v,r,s,u)].\label{eq:sym2}\end{align}
By symmetry, this implies that the probability of the former is larger than $\tfrac12$. This concludes the proof.

\begin{figure}[h]
\centering
\includegraphics[width  =0.8 \textwidth]{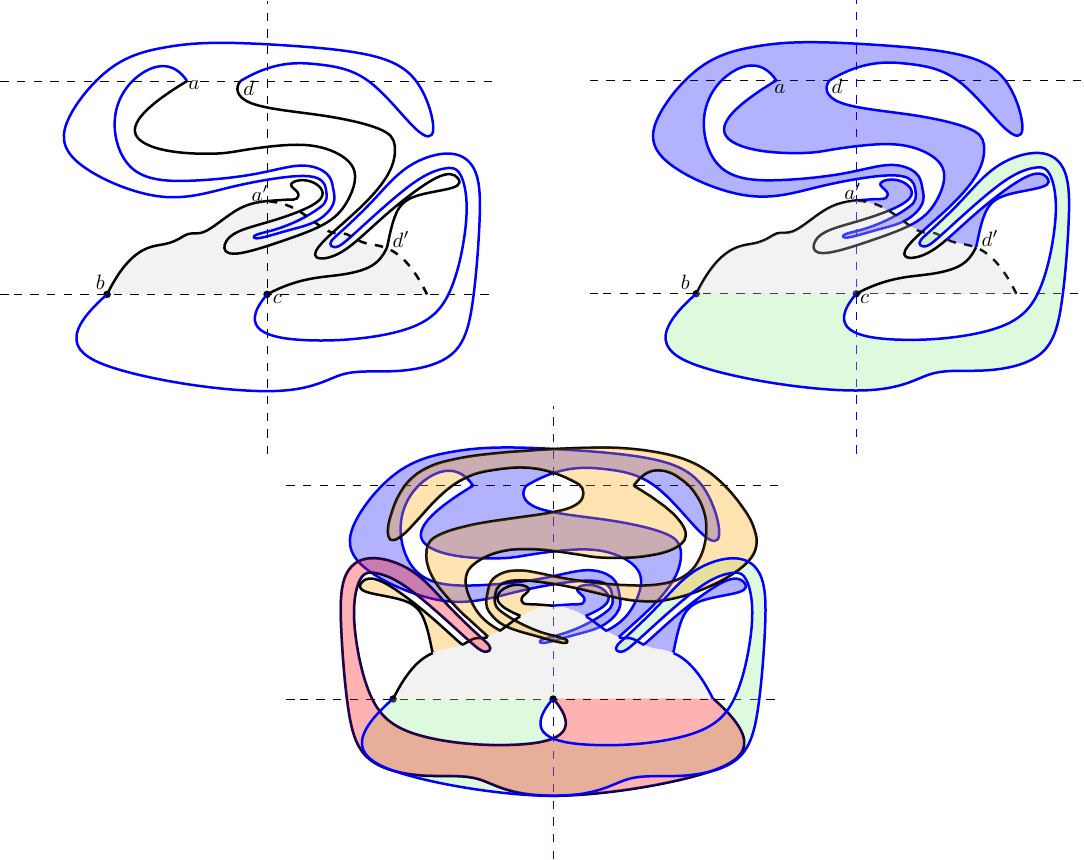}
\caption{Blue curves denote odd $\times$ paths with boundary condition 1 while the black curves are even $\times$ paths boundary condition 2. Top left: The symmetric domain $S$ shaded in grey. Top right: The portion $U_{\rm top-right} $ is shaded blue and $U_{\rm bottom-left}$ is shaded green. Bottom: The final symmetric domain $\Omega$. $U_{\rm top-left}$ is shaded orange and $U_{\rm bottom-right} $ is shaded red.} \label{fig:new_hard_case_surgery}
\end{figure}


 \paragraph{Second case}
Let $\ell'$ be the vertical line passing by $c+(\tfrac12,0)$. We start by defining an equivalent of the symmetric domain introduced in the second case of Proposition~\ref{prop:weak}. Assume that $[ab]$ and $[cd]$ do not intersect otherwise there is nothing to prove.  
Denote the reflection with respect to the vertical line $\ell'$ passing by $c+(\tfrac12,0)$ by $\sigma'$ ($\sigma'$ maps the even lattice to the odd lattice). We now define a domain which will play the role of the domain $\Lambda_n^{\rm mix}$ from the first case. This domain will be symmetric about $\ell'$ so we do not need to shift the reflected paths to maintain the parity constraints. We suggest that the reader refer to \cref{fig:new_hard_case_surgery} while reading the definitions below.

Let $a'$ be the vertex just before the first intersection of $[ab]$ (when going from bottom to top and when seen as a continuous path) with $\ell'$ and let $S$ be the quad enclosed by
\begin{itemize}
\item $[a'b]$,
\item $\sigma'([a'b])$,
\item  the odd $\times$-path of $\Z\times\{-n,-n+1\}$ between the right neighbour of $b$ and the left neighbour of $c$,
\item and the even $\times$-path between $c$ and $\sigma'(b)$.
\end{itemize}
Finally, let $d'$ be the last vertex of $[cd]$ before it exits $S$ for the first time.

Let $U_{\rm top-right}$ be the union of odd domains whose boundaries are defined as follows. Let $s$ be the odd path defined as the union of 
\begin{itemize}
\item the path from a neighbour of $d'$ to neighbour of $d$ using the odd vertices which are neighbours of the vertices of $[dd']$ from the exterior of $D$,
\item the odd path $(da)$,
\item the odd vertices neighbouring from the exterior of $D$ the vertices of $[aa']$.
\end{itemize}

Let $t$ be the odd $\times$-path going along $\sigma([a'b])$ from $d'$ to the neighbour of $a'$ and observe that $t\cap D$ is divided into segments. Now, let $[u_1v_1],\dots, [u_iv_i]$ be the segments of $t$ that can be reached from $a'$ (or equivalently $d'$) while staying in $D\cap S$.
For $1\le j\le i$, consider the domain $U_j$ enclosed by $[u_jv_j]$ and the part of $s$ going from $u_j$ to $v_j$. We then define $U_{\rm top-right}$ to be the union of the $U_j$ for $1\le j\le i$ and $U_{\rm top-left}=\sigma'(U_{\rm top-right})$. 
Similarly, we define $U_{\rm bottom-left}$ and $U_{\rm bottom-right}=\sigma'(U_{\rm bottom-left})$ in a straightforward fashion (in this case the definition is even simpler: there is only one domain since $(bc)$ does not cross the odd $\times$-path of $\Z\times\{-n,-n+1\}$ between the right neighbour of $b$ and the left neighbour of $c$).

Introduce
\[
\Omega = S \biguplus U_{\rm top-left} \biguplus U_{\rm bottom-left} \biguplus U_{\rm bottom-right} \biguplus U_{\rm  top-right}
\]
(the gluing of the different pieces is made along the segments $[u_jv_j]$ defined above for the top pieces, and the odd $\times$-path of $\Z\times\{-n,-n+1\}$ between the right neighbour of $b$ and the left neighbour of $c$ for the bottom parts). 
Let $\xi$ be the boundary condition on $\Omega$ equal to 2 on even vertices of the boundary and 1 on odd ones. 

To complete the proof, we must compare the crossing probabilities in $D'$ and $\Omega$ by pushing away the boundary condition $h=2$, and then applying a symmetry argument. The proof follows the same procedure as the previous case, and we therefore omit the details for the sake of brevity.

\section{Proofs of the theorems}\label{sec:4}

\subsection{Two useful crossing probabilities}\label{sec:4.1}

\begin{proposition}\label{prop:0}
For every $\rho>0$, there exists $c=c(\rho)>0$ such that for every $n$ and every even domain $D$ containing $\Lambda_{\rho n,n}$, 
\begin{equation}
\phi_D^0[\cH_{h\ge0}(\Lambda_{\rho n,n})]\ge c.
\end{equation}
\end{proposition}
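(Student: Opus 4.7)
The plan is to combine a duality-and-symmetry argument for the square case with the RSW theorem of Theorem~\ref{thm:RSW} to handle arbitrary aspect ratios.

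First I would treat the square case $\rho=1$ for an even domain $\tilde D\supset \Lambda_{n,n}$ that is symmetric under the $90^\circ$ rotation about the origin (for instance a symmetric variant of the $\Lambda_n^{\mathrm{even}}$ used in Section~\ref{sec:3.2}). The argument chains together four facts. Duality (Lemma~\ref{lem:duality}) gives $\cH_{h\ge 0}(\Lambda_{n,n})^c = \cV_{h\le -1}^\times(\Lambda_{n,n})$, since a nearest-neighbor horizontal path of $h\ge 0$ is blocked precisely by a $\times$-path of $h\le -1$ in the orthogonal direction. The $\pm$-symmetry of $\phi_{\tilde D}^0$ identifies $\phi_{\tilde D}^0[\cV_{h\le -1}^\times]=\phi_{\tilde D}^0[\cV_{h\ge 1}^\times]$. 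Because every nearest neighbor of a vertex with $h\ge 1$ has $h\ge 0$, a vertical $\times$-path of $h\ge 1$ can be \emph{thickened} by inserting a common nearest neighbor between every two consecutive $\times$-steps to yield a vertical nearest-neighbor path of $h\ge 0$; this gives $\cV_{h\ge 1}^\times\subset \cV_{h\ge 0}$. Finally the $90^\circ$-symmetry of $\tilde D$ gives $\phi_{\tilde D}^0[\cV_{h\ge 0}]=\phi_{\tilde D}^0[\cH_{h\ge 0}]$. Concatenating shows
\[
1-\phi_{\tilde D}^0[\cH_{h\ge 0}(\Lambda_{n,n})]=\phi_{\tilde D}^0[\cV_{h\ge 1}^\times]\le \phi_{\tilde D}^0[\cV_{h\ge 0}]=\phi_{\tilde D}^0[\cH_{h\ge 0}(\Lambda_{n,n})],
\]
giving a lower bound of $\tfrac12$.

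Next I would transfer this bound from the symmetric $\tilde D$ to a general $D\supset \Lambda_{n,n}$. Picking a symmetric subdomain $\tilde D\subset D$, the spatial Markov property writes $\phi_D^0[\cH_{h\ge 0}]$ as an expectation of $\phi_{\tilde D}^{\partial \tilde D,\xi}[\cH_{h\ge 0}]$ over the induced boundary configuration $\xi$, which is $\pm$-symmetrically distributed. Pairing $\xi$ with $-\xi$ converts $\phi_{\tilde D}^{\partial \tilde D,-\xi}[\cH_{h\ge 0}]$ into $\phi_{\tilde D}^{\partial \tilde D,\xi}[\cH_{h\le 0}]$ and yields
\[
\phi_D^0[\cH_{h\ge 0}(\Lambda_{n,n})]\ge \tfrac12\,\phi_D^0\big[\cH_{h\ge 0}(\Lambda_{n,n})\cup \cH_{h\le 0}(\Lambda_{n,n})\big].
\]
The complement of the right-hand event is the two-arm event $\cV_{h\ge 1}^\times\cap \cV_{h\le -1}^\times$ of disjoint vertical $\times$-crossings of opposite signs, which I would bound away from $1$ by analyzing signs conditionally on $|h|$: the signs of distinct $|h|\ge 1$ nearest-neighbor clusters are independent fair Bernoulli, so the conditional probability is small unless at least two clusters carry vertical $\times$-crossings, and this $|h|$-increasing event can be controlled using FKG for $|h|$ together with the symmetric-domain bound of the previous paragraph. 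For general $\rho$, monotonicity handles $\rho\le 1$ (restrict a crossing of $\Lambda_{n,n}$ to $\Lambda_{\rho n,n}$), while for $\rho>1$ I would invoke Theorem~\ref{thm:RSW}: the same thickening as above gives $\cH_{h\ge 2}^\times\subset \cH_{h\ge 0}$, so
\[
\phi_{\overline D}^0[\cH_{h\ge 0}(\Lambda_{\rho n,n})]\ge \phi_{\overline D}^0[\cH_{h\ge 2}^\times(\Lambda_{\rho n,n})]\ge c(\rho)\,\phi_D^0[\cV_{h\ge 2}^\times(\Lambda_{\rho n,n})]^{\rho/c},
\]
and $\cV_{h\ge 2}^\times(\Lambda_{\rho n,n})\supset \cV_{h\ge 2}^\times(\Lambda_{n,n})$ is bounded below by the analogous symmetry-duality argument on the square adapted to $h\ge 2$.

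The main obstacle is the two-arm control in the transfer step: the naive FKG-for-$h$ estimate $\phi_D^0[\cV_{h\ge 1}^\times\cap \cV_{h\le -1}^\times]\le \phi_D^0[\cV_{h\ge 1}^\times]\,\phi_D^0[\cV_{h\le -1}^\times]$ turns out to be vacuous (it only delivers $(1-p)^2\le 1-(1-p)^2$, tautologically $p\ge 0$). The resolution must pass through the sign-decomposition conditional on $|h|$, where the events $\cV_{h\ge 1}^\times$ and $\cV_{h\le -1}^\times$ become measurable with respect to the independent signs of the $|h|\ge 1$ clusters and can be related to the $|h|$-increasing event that some $|h|\ge 1$ cluster carries a vertical $\times$-crossing, which FKG for $|h|$ then allows one to handle.
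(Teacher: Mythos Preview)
Your symmetric-square step is fine, but the two places where you leave the symmetric world both have genuine gaps.

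\textbf{Transfer to general $D$ (your step 2).} The sign-decomposition you propose does not work for $\times$-crossings of $h\ge 1$. As noted in the remark following Lemma~\ref{lem:duality}, a $\times$-path of $h\ge 1$ is \emph{not} measurable with respect to $|h|$ together with the independent signs of the nearest-neighbour $|h|\ge 1$ clusters: two $\times$-adjacent vertices with $|h|=1$ can lie in different nearest-neighbour clusters and receive opposite signs. So the events $\cV_{h\ge 1}^\times$ and $\cV_{h\le -1}^\times$ are not functions of the cluster signs, and the ``conditional probability is small unless at least two clusters carry vertical $\times$-crossings'' reduction is not available. Even granting some version of it, you then appeal to ``FKG for $|h|$ together with the symmetric-domain bound of the previous paragraph'' to control an $|h|$-increasing event \emph{uniformly over all domains $D$}; but the symmetric-domain bound is only for one specific $\tilde D$, and any uniform-in-$D$ input you might invoke here is precisely the content of the proposition you are trying to prove. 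The argument is circular.

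\textbf{Aspect ratio $\rho>1$ via RSW (your step 3).} You claim that $\phi_D^0[\cV_{h\ge 2}^\times(\Lambda_{n,n})]$ is ``bounded below by the analogous symmetry-duality argument on the square adapted to $h\ge 2$.'' There is no such argument: the $\pm$-symmetry sends $h\ge 2$ to $h\le -2$, not to the dual $h\le 1$, so the chain that gave $\tfrac12$ for $h\ge 0$ collapses. An unconditional lower bound on $\phi_D^0[\cV_{h\ge 2}^\times]$ would already rule out case~B1 of Theorem~\ref{thm:dichotomy}, which the paper only obtains by invoking the external input of \cite{She05,CPT18}. Proposition~\ref{prop:0} is meant to be unconditional and is used \emph{inside} the proof of the dichotomy, so you cannot assume delocalization here. (There is also the minor issue that Theorem~\ref{thm:RSW} outputs a bound in $\overline D$, not in the given $D$.)

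The paper's proof avoids both problems by never leaving the given domain $D$ and never invoking a lower bound on $h\ge 2$ crossings. It sets $p=\phi_D^0[\cH_{h\ge 0}]$, uses duality to write $1-p=\phi_D^0[\cV_{h<0}^\times]$, and then runs the RSW machinery of Section~\ref{sec:3.1} \emph{with $h<0$ in place of $|h|\ge 2$}: the square-root trick localizes a $\cV_{h<0}^\times$ crossing to a small interval with probability $\ge 1-p^{1/C}$, this interval is then subdivided into eight pieces, and alternating the events with their sign-flips (which have the same probability by $\pm$-symmetry) forces, with probability $\ge 1-7p^{1/(8C)}$, a configuration in which three sub-intervals are \emph{not} bridged by $h<0$. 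On the other hand, the bridging argument of Propositions~\ref{prop:weak}--\ref{prop:strong} shows that at least one of them \emph{is} bridged by $|h+1|>0$ (hence by $h<0$, using FKG for $h$ with the $0$ boundary) with probability $\ge c(1-p^{1/(8C)})^4$. Comparing the two gives a nontrivial lower bound on $p$ directly, with no symmetric auxiliary domain and no appeal to $h\ge 2$ estimates.
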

\begin{remark}
It is worth mentioning that we do not require $\partial D$ to be far away from $\Lambda_{\rho n,n}$. In fact, the boundary of $\partial D$ may partially coincide with the boundary of $\Lambda_{\rho n, n}$ without raising any issues.
\end{remark}
\begin{proof}
The first item of Lemma~\ref{lem:duality} implies that 
$$p:=\phi_D^0[\cH_{h\ge0}(\Lambda_{\rho n,n})]= 1-\phi_D^0[\cV_{h<0}^\times(\Lambda_{\rho n,n})].$$
{The idea now is the following. 
\begin{itemize}
\item On the one hand, the square root trick implies that several vertical $\times$-crossings of~{$h<0$}, localized to intersect specific locations in the bottom boundary of the rectangle, occur simultaneously 
with positive probability which converges to one as $p$ goes to zero. By sign flip symmetry and the union bound, we may deduce that similar $h>0$ crossings occur in an interlacing fashion with the $h<0$ with positive probability which also goes to one as $p$ goes to zero.
\item On the other hand, the $h<0$ $\times$-crossings can be bridged by $h \le 0$ paths with positive probability (uniformly in $p$), using same arguments as in \cref{sec:3.1}. However, the existence of interlacing $h >0$ $\times$-vertical paths preclude the occurrence of such bridging paths. Combining these estimates lead to a lower bound on $p$.
\end{itemize}
We now provide the details. For the bridging part of the argument, we only need to bridge using $h \le 0$ path, so we only need the arguments of \cref{prop:weak}, and hence we do not require to divide the rectangle into three parts as in \cref{sec:3.1}. To that end recall the definition of $I_k,L_k$ from \cref{sec:3.1}, where we replace $3n$ by $n$ where appropriate. We furthermore recall the definition of $\alpha$ from \cref{sec:3.1}, which specified the geometry of the vertical paths. Now define  $\cE=\cE_{i\ell}^\alpha$ analogously to $\cE_{ijk\ell}^{\alpha\beta\gamma}$, replacing $|h| \geq 2$ by $h<0$ and requiring only one geometric index $\alpha$ (instead of the triplet $\alpha,\beta,\gamma$). We trust the reader to make the appropriate modification required in these definitions. As in \eqref{eq:partition}, the square-root trick implies that there exist $i,\ell,\alpha$ and $C  = C(\rho)$ such that  
$$\phi_D^0[\cE]\ge 1-p^{1/C},$$
For the rest of the proof, call a $\times$-path $\gamma$ {\em achieving} if it guarantees the occurrence of $\cE$. For a subset $I$ of $I_i$, let $\cE(I)$ be the event that there exists an achieving $\times$-path starting from $I$.

We now claim that $I_i$ can be split in eight intervals $I^1,\dots,I^8$ ordered from left to right and intersecting at their extremities (they may have different sizes) such that $$\phi_D^0[\cE(I^k)]\ge1-p^{1/(8C)} \text{ for each $k$}.$$ Indeed, first split the interval $I_i=[ab]$ in two by choosing the left-most $x$ such that the probability that $\phi_D^0[\cE([ax])]\ge\phi_D^0[\cE((xb])]$. Then, the square-root trick implies that both $\phi_D^0[\cE([ax])]$ and $\phi_D^0[\cE([xb])]$ are larger than $1-p^{1/(2C)}$. One can iterate this reasoning to get the claim. 

If $\widetilde\cE(I)$ denotes the image of the event $\cE(I)$ after flipping all the signs, the flip symmetry and the union bound give that 

\begin{equation}
\phi_D^0[\cE(I^1)\cap\widetilde\cE(I^2)\cap\cE(I^3)\cap \widetilde\cE(I^4)\cap \cE(I^5)\cap\widetilde\cE(I^6)\cap\cE(I^7)]\ge 1-7p^{1/(8C)}.\label{eq:lower_big_event}
\end{equation}

We now describe how to use the bridging arguments of \cref{sec:3.1} to obtain an upper bound of the probability of the event above. Let $\cA_i$ be the event that $I_i$ and $I_{i+2}$ are connected by a $h \le 0$ path in $\Lambda_{\rho n, n}$. Now observe that if $\tilde \cE_{i+1} $ occurs, $\cA_i$ cannot occur by duality (\cref{lem:duality}). Thus, if the equality 
\begin{equation}
\max\{\phi_D^0 (\cA_1), \phi_D^0 (\cA_5)\} >\frac13 (\phi_D^0(\cE(I^1) ))^4\ge \frac13 (1-p^{1/(8C)})^4. \label{eq:lower_ca}
\end{equation}
holds, then 
\begin{multline*}
\phi_D^0[\cE(I^1)\cap\widetilde\cE(I^2)\cap\cE(I^3)\cap \widetilde\cE(I^4)\cap \cE(I^5)\cap\widetilde\cE(I^6)\cap\cE(I^7)] \le \min  (\phi_D^0 (\cA_1^c), \phi_D^0(\cA_5^c))\\
\le 1- \frac13 (1-p^{1/(8C)})^4.
\end{multline*}
Combining this upper bound with \eqref{eq:lower_big_event}, we easily obtain a lower bound of $p$ depending only on $C=C(\rho)$ which completes the proof of the proposition.

Let us assume that \eqref{eq:lower_ca} does not hold. In that case
\begin{equation}
\phi_D^0[\cE(I^1)\cap\cA_1^c \cap\cE(I^3)\cap \cE(I^5)\cap \cA_5^c \cap\cE(I^7)] \ge \frac13  (\phi_D^0(\cE(I^1) ))^4 \ge \frac13 (1-p^{1/(8C)})^4\label{eq:complement_a}
\end{equation}

 Let $\bar \cE \supset \cE$ be the event $\cE$ but with no restriction on the geometry or the location where the path hits the top boundary of the rectangle; that is, we only restrict that the starting point of the crossing must be in $I_i$.  

\begin{align}
\phi_D^0[\cE(I^1)\cap\widetilde\cE(I^2)\cap\cE(I^3)& \cap \widetilde\cE(I^4)\cap \cE(I^5) \cap\widetilde\cE(I^6)\cap\cE(I^7)] \\ \le &\phi_D^0[\cE(I^1)\cap\cA_1^c  \cap \bar \cE(I^3)\cap \cA_3^c \cap \bar \cE(I^5)\cap\cA_5^c \cap\cE(I^7)]\label{eq:reduction_A}\\
\le & 1-\phi_D^0[\cE(I^1)\cap\cA_1^c  \cap \bar \cE(I^3)\cap \cA_3 \cap \bar \cE(I^5)\cap\cA_5^c \cap\cE(I^7)] \label{eq:01} \\  =: & \, 1 - \phi_D^0[E \cap \cA_3] 
\end{align}
where we emphasize that $\cA_3^c$ changes to $\cA_3$ in the final inequality above. Furthermore, we have
\begin{equation}
\phi_D^0[E \cap \cA_3] = \phi_D^0[\cA_3 | E]  \cdot \phi_D^0[E] \ge \phi_D^0[\cA_3 | E] \cdot  \frac13 (1-p^{1/(8C)})^4\label{eq:02'}
\end{equation}
where in the last line we used \eqref{eq:complement_a}. Now we claim that
\begin{equation}
\phi_D^0[\cA_3 |E] \ge \frac18.\label{eq:03}
\end{equation}
Indeed, we are exactly in the setup of \eqref{eq:max_A2}, except we are in a strictly easier case. Firstly mapping $h \mapsto 1-h$, the above event is the same as connecting two vertical $\times$-paths of $h \ge 2 $ by a path of $h \ge 1$ with boundary condition $h=1$ on the boundary of $D$. By FKG for $h$, this event has strictly larger probability than the same with boundary condition equal to 0. We now proceed in the same way as in the proof of \cref{thm:RSW} in \cref{sec:3.1}. First, we explore to reduce to the case in \eqref{eq:sym1} with $h \ge 2$ replaced by $h \ge 1$ thereby losing a factor of $\frac14$ (notice that connecting two vertical $h \ge 2$ $\times$-paths by a $|h| \ge 1$ path is the same event as connecting them by $h \ge 1$ path, hence using FKG for absolute value \cref{prop:FKG_modh} we can push the 0 boundary in). Then we can apply \cref{prop:weak} to show that this event has probability at least $\frac12$. Thus overall, we get a lower bound of $\frac18$. \footnote{In \eqref{eq:max_A2}, we needed to apply \cref{prop:weak} twice and \cref{prop:strong} once which gave a factor of $\frac1{32}$ instead of $\frac18$.} Inserting \eqref{eq:03} into $\eqref{eq:02'}$, and further inserting that in \eqref{eq:01} and applying \eqref{eq:lower_big_event}, we see that
$$
 1-\frac1{24} (1-p^{1/(8C)})^4 \ge 1-7p^{1/(8C)}.
$$ 
This easily yields a lower bound on $p$ depending only on $C = C(\rho)$, thereby completing the proof. 
}

\end{proof}
{The second estimate we wish to obtain could be thought of as an enhancement of \cref{prop:0} when $D$ is a rectangle. We wish to show that in a rectangle (with any aspect ratio) with boundary condition 0 on the left, right and top boundary and any integer $g \ge 0$ on the bottom boundary, the probability of obtaining a $\times$-crossing of $h \le 0$ in an arbitrarily thin rectangle close to the bottom boundary is positive (depending only on the aspect ratios and $g$, but not on the scale). We first define the rectangle and the boundary conditions in a proper way as there are (mild) technical issues with the parity of the paths and the compatibility of boundary condition for an arbitrary $g$ (for example, we cannot require $g>2$ to be a $\times$-neighbour of 0).}

When $n$ is even, consider the approximation $R_{n,m}^g$ of $[-n,n]\times[0,m]$ obtained by taking what is inside 
\begin{itemize}[noitemsep,nolistsep]
\item the even $\times$-path going from $(n,0)$ to $(-n,0)$ following $\{n,n+1\}\times\Z$, then $\Z\times\{m,m+1\}$ and finally $\{-n-1,-n\}\times\Z$,
\item if $g$ is even (resp.~odd), the even (resp.~odd) $\times$-path from $(n,0)$ to $(-n,0)$ in $\Z\times\{-1,0\}$.
\end{itemize}
A similar approximation can easily be defined for $n$ odd by replacing $n$ above by $2\lfloor n/2\rfloor$.
Also, let $0/g$ be the boundary condition equal to $g$ on the bottom of $R_{n,m}^g$, $0$ on the left, right and top boundary, except at the bottom-left and bottom-right corners where the boundary condition is interpolating between $0$ and $g$ in the shortest way. Since the superscript will always be obvious from context (for instance because it is the only one compatible with the boundary conditions), we will write $R_{n,m}$ instead of $R_{n,m}^g$. {We will abuse terminology here and call $R_{n,m}$ a domain in what follows.}

\begin{proposition}\label{prop:mixed}
For any $g \in \N$, $H >  \delta > 0$, there exits $c=c(H,\delta,g)>0$ such that for all $n \ge 1$,
\begin{align*}
\phi^{0/g}_{R_{n,Hn}}[\cH_{h= 0}^\times(R_{n,\delta n})]\ge\phi^{0/g}_{R_{n,Hn}}[\cH_{h\le 0}(R_{n,\delta n})] \geq c.
\end{align*}

\end{proposition}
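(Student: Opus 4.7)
This follows from a planar duality argument on the subrectangle $R_{n,\delta n}$. By the first bullet of \cref{lem:duality}, $\cH_{h\le 0}(R_{n,\delta n})^c$ is the vertical odd-sublattice $\times$-crossing $\cV_{h\ge 1}^\times(R_{n,\delta n})$. Likewise, since the even sublattice equipped with $\times$-adjacency is isomorphic to $\Z^2$, planar duality characterizes $\cH_{h=0}^\times(R_{n,\delta n})^c$ as a vertical $*$-crossing on even vertices with $|h|\ge 2$. The Lipschitz property of homomorphisms forces every even-sublattice $*$-crossing of $|h|\ge 2$ to be accompanied by a nearby odd-sublattice $\times$-crossing of $|h|\ge 1$; combined with FKG for $|h|$ and the sign-symmetry it affords under the boundary condition $0/g$, this yields the inclusion of probabilities $\phi^{0/g}[\cV_{|h|\ge 2}^{*,\mathrm{even}}] \le \phi^{0/g}[\cV_{h\ge 1}^\times]$, i.e.\ the first inequality.

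\paragraph{Symmetry and monotonicity reduction.} For the second inequality, I apply the involution $h\mapsto g-h$, which sends $\phi^{0/g}_{R_{n,Hn}}$ to $\phi^{g/0}_{R_{n,Hn}}$ (the parity swap for odd $g$ is absorbed by re-identifying the resulting domain with $R_{n,Hn}^g$ of the opposite parity) and sends $\cH_{h\le 0}$ to $\cH_{h\ge g}$. Since $g/0 \ge 0$ pointwise on the boundary and $\cH_{h\ge g}$ is increasing in $h$, the comparison between boundary conditions (\cref{prop:FKG_h}) gives
\[
\phi^{0/g}_{R_{n,Hn}}[\cH_{h\le 0}(R_{n,\delta n})] \;=\; \phi^{g/0}_{R_{n,Hn}}[\cH_{h\ge g}(R_{n,\delta n})] \;\ge\; \phi^{0}_{R_{n,Hn}}[\cH_{h\ge g}(R_{n,\delta n})].
\]
For $g=0$, the right-hand side is immediately bounded below by a constant $c(\delta)>0$ via \cref{prop:0} applied with aspect ratio $\rho=2/\delta$, after rescaling and translating $\Lambda_{\rho m,m}$ so that it coincides with $R_{n,\delta n}$.

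\paragraph{Iterated RSW and main obstacle.} For $g\ge 1$, I iterate \cref{thm:RSW}. Starting from the $g=0$ bound and using duality to obtain an initial $\times$-crossing of $h\ge 2$, I repeatedly apply the following step: given a $\times$-crossing of $h\ge 2k$ in a suitable rectangle under $0$ boundary condition, condition on an extremal such crossing and use FKG for $|h-2k|$ together with the spatial Markov property---precisely the ``pushing away'' manipulation of \eqref{eq:pushing away}---to reduce to a problem with effective boundary value $h=2k$ on a slightly smaller rectangle. Subtracting $2k$ (which is even, so parity is preserved) turns this into a $0$-boundary-condition problem to which \cref{thm:RSW} applies again, producing a $\times$-crossing of $h\ge 2$, that is, of $h\ge 2k+2$ in the original. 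After $\lceil g/2 \rceil$ such iterations I obtain the desired $\cH_{h\ge g}$ crossing with constant $c(g,\delta)>0$. The principal technical obstacle is to verify that after each pushing step the reduced domain still contains a rectangle of aspect ratio $2/\delta$ on which \cref{thm:RSW} can be applied; this is exactly where the condition $H>\delta$ enters, as the large vertical extent of $R_{n,Hn}$ provides the room needed to absorb the $O(g)$ iterations while preserving the geometric hypothesis.
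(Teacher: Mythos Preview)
Your argument for the second inequality has a genuine circularity. After the involution $h\mapsto g-h$ and the comparison between boundary conditions, you are left needing a uniform lower bound on
\[
\phi^{0}_{R_{n,Hn}}\bigl[\cH_{h\ge g}(R_{n,\delta n})\bigr]
\]
for $g\ge 1$. But no such bound is available at this point in the paper: a uniform-in-$n$ lower bound on a height-$\ge g$ crossing under flat $0$ boundary conditions is essentially the content of B2, which is proved \emph{using} \cref{prop:mixed} via the renormalisation of Section~\ref{sec:4.2}. If B1 held (not yet excluded), this probability would in fact be stretched-exponentially small. Your iterated invocation of \cref{thm:RSW} does not manufacture the missing input either: RSW only bounds hard-direction $h\ge 2$ $\times$-crossings in terms of easy-direction ones, so it needs a seed lower bound on $\phi^0[\cV^\times_{h\ge 2}]$, which is exactly what is unavailable. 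In short, the CBC step throws away precisely the structure that makes the proposition provable here.

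The paper does not reduce to $0$ boundary conditions. It inducts down to $g=1$ and then works in the infinite strip with the $0/1$ boundary condition, which carries the symmetry $h\mapsto 1-h$. That symmetry, combined with duality, yields for free that one of $\cH^\times_{h\le 0}$ or $\cV^\times_{h\le 0}$ has probability at least $\tfrac12$; RSW (transported to the strip) then converts the vertical case to the horizontal one. This symmetry-generated seed is the idea you are missing, and it is lost the moment you dominate $g/0$ by $0$.

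A smaller point on the first inequality: the paper obtains it by straightforward event inclusion---given an $h\le 0$ horizontal crossing of $R_{n,\delta n}$, the interface bounding the $\{h\ge 1\}$ component of the bottom is an $h=0$ $\times$-crossing lying below it, hence still in $R_{n,\delta n}$. Your route instead appeals to a ``sign-symmetry under the boundary condition $0/g$'', but there is no such symmetry: the boundary values are nonnegative everywhere, so FKG for $|h|$ gives no sign-flip for free.
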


\begin{proof} The first inequality clearly follows by inclusion.  {The second inequality will follow from two steps. In step 1, we assume the result is true for $g=1$, and then induct on $g$. In step 2, we prove the base case $g=1.$

\paragraph{Step 1: Induction step assuming result for $g=1$.} This step is simply an application of FKG for $|h| $ iteratively.
Indeed, assume that we already proved the existence of $c(H,\delta,1)>0$. Then we immediately see that we can find a $h\le g-1$ crossing within $R_{n,\delta n}$ with positive probability $c(H,\delta,1)>0$. Indeed, notice that if $g$ is odd, we can consider the boundary condition $\phi^{g-1/g}$ which is an overall increase of the boundary values which implies the lower bound of the required probability by FKG  for $h$ and the existence of $c(H,\delta,1)$. If $g$ is even, the required application of FKG is not much more complicated: we can first increase the 0 boundary values to $g-2$, and then using domain Markov property put values $g-1$ on the exterior boundary of the top, bottom and left, and then drop the boundary condition with value $g-2$. All these operations decrease the probability of $h \le g-1$ crossing. We leave the details of this to the reader for brevity.}

Now condition on the bottom-most crossing of $h\le g-1$ (we explore from the bottom in a Markovian way as usual). We wish to find a crossing of $h \le g-2$ above this crossing. Noting that the event ``there exists a crossing of $R_{n,2\delta n}\setminus R_{n,\delta n}$ of $h\le g-2$'' is increasing in terms of $|h-g+1|$, we can use FKG for $|h - g+1|$ to push the boundary conditions to get the translate of the $0/(g-1)$ boundary condition on the translate by $(0,\delta n)$ of $R_{n,(H-\delta)n}$, so that the conditional probability of finding a crossing of $h \le g-2$ inside $R_{n,2\delta n}\setminus R_{n,\delta n}$ (and therefore inside $R_{n,2\delta n}$) is bounded from below by $c(H-\delta,\delta,1)$. Iterating, we see that we can find a crossing of $h \le 0$ inside $R_{n,g\delta n}$ with probability at least $c(H,g\delta,g):=\prod_kc(H-k\delta,\delta,1)>0$.
\paragraph{Step 2: proof for $g=1$.}
We now focus on proving the existence of $c(H,\delta,1)>0$. Assume without loss of generality that $Hn$ is divisible by 4. Let $S_n$ be the infinite strip bounded by the even vertices of $\Z \times \{Hn+1, Hn+2\}$ and the odd vertices of $\Z \times \{0,-1\}$ and let  $0/1$ be the boundary condition equal to 0 on the top and 1 on the bottom. The existence of the measure $\phi^{0/1}_{S_n}$ is a straightforward exercise as the domain is essentially one dimensional and the homomorphism model enjoys a version of the finite energy property. Let $R'_n:=[-n,n]\times[\tfrac14Hn,\tfrac34Hn]$.

We claim that there exists a constant $c=c(H)>0$ such that for all $n \ge 1$,
\begin{equation}\label{eq:hor crossing}
\phi^{0/1}_{S_n}[\cH_{h=0}^\times(R_{n,3Hn/4})]\ge \phi^{0/1}_{S_n}[\cH_{h\le0}^\times(R'_n)] \geq c.
\end{equation}
Indeed, the first inequality follows from the inclusion of events (induced by boundary conditions). For the second, assume that it does not hold with $c=1/2$. Then, the first item of Lemma~\ref{lem:duality} and the symmetry of the measure imply that
$$
\phi^{0/1}_{S_n}[\cV_{h\le0}^\times(R'_n)]\ge\phi^{0/1}_{S_n}[\cV_{h\le0}(R'_n)]=\phi^{0/1}_{S_n}[\cV_{h\ge1}(R'_n)]=1- \phi^{0/1}_{S_n}[\cH_{h\le0}^\times(R'_n)]> \tfrac12.$$
{We now repeat the proof of Theorem~\ref{thm:RSW} to show that, given the lower bound on the probability of vertical crossings, we can produce a lower bound on the probability of horizontal crossings. First, we use the $+/-$ and translation symmetries and the FKG inequality in $h$ to observe that 
\begin{equation}
2 \phi^{0/1}_{S_n}[\cH_{h\le0}^\times(R'_n)] = 2 \phi^{2/1}_{S_n}[\cH_{h\ge 2}^\times(R'_n)] \geq \phi^{2/1}_{S_n}[\cH_{|h|\ge 2}^\times(R'_n)]. 
\end{equation}
From here, we assume that there exist there vertical $\times$-crossing of $R_n'$ with $h \leq 0$, localized to begin at distinct intervals of length $2 \varepsilon n$ on the bottom, and explore the outermost realizations of such crossings (as we did in Section~\ref{sec:3.1}). From here, the remaining argument follows, {\em mutatis mutandis}.}

%

The FKG inequality for $|h|$ enables us to bring boundary conditions in to find that 
\begin{equation}
\phi^{0/1}_{R_{n,Hn}}[\cH_{h=0}^\times(R_{n,3Hn/4})]\ge \phi^{0/1}_{S_n}[\cH_{h=0}^\times(R_{n,3Hn/4})]\ge c.\label{eq:push1}
\end{equation}
Now, condition on the top-most horizontal $\times$-crossing of $h=0$ in $R_{n,3Hn/4}$ {(as usual by exploring in a Markovian way starting from the top boundary).} Applying spatial Markov property and FKG of $|h|$, the probability of seeing a $\times$-crossing of $h=0$ in  $R_{n,(3/4)^2Hn}$ is larger than $c(\tfrac34 H)$ { since, in this case, we can `straighten' the top boundary by pushing it inside using FKG for $|h|$}. We iterate this step to 
find that for all $n \ge 1$,
\[
\phi^{0/1}_{R_{n,Hn}}[\cH_{h=0}^\times(R_{n,\delta n/2})]\ge \prod_{k\le \log_{4/3}(2H/\delta)}c((\tfrac34)^kH)>0.
\]
To conclude, it remains to create a $h\le0$ crossing in $R_{n,\delta n}$ from the weaker $h \le 0$ $\times$-crossing analyzed above. Explore from the bottom to find the lowest such $\times$-crossing of $h=0$ and call the domain above it $D$. Conditioned on this lowest crossing, the boundary condition on $D$ is 0 everywhere. We therefore can apply Proposition~\ref{prop:0} to show that the probability of a crossing of $h\le0$ in $R_{n,\delta n}\setminus R_{n,\delta n /2}$ is bounded from below by $c'>0$ {(this is where we used that the boundary of $D$ in \cref{prop:0} can touch the boundary of the rectangle)}, thus proving that 
\[
\phi^{0/1}_{R_{n,Hn}}[\cH_{h\le 0}(R_{n,\delta n})]\ge c'\phi^{0/1}_{R_{n,Hn}}[\cH_{h=0}^\times(R_{n,\delta n/2})].
\]
Combined with the previous displayed equation, this concludes the proof.
\end{proof}

\subsection{The renormalization proposition}\label{sec:4.2}

As described in the introduction, we wish to follow the renormalisation argument from \cite{DST} to complete the argument. Unfortunately, a new difficulty appears in our setting: one could imagine that the existence of a long $\times$-crossing of $h \geq 2$  inside a box forces the height function to be much larger than $2$ everywhere inside. In practice, it manifests in the fact that to apply \cref{prop:mixed}, we need a bound on the boundary values which is not {\em a priori} clear.

To deal with this issue, we distinguish two cases. If the probability of a crossing of $h \geq 2$ is similar to the one of a $\times$-crossing of $2 \leq h \leq g$ for some $g$, which is the expected behaviour, we can apply the original argument of \cite{DST} with appropriate modifications. If not, then the cost of a $\times$-crossing of  $h \geq 2$ is similar to the cost of a crossing of $h \geq g$. In this case we obtain $(g-2)/2$ crossings ``for free'', an event whose probability can be easily bounded. 

Define $\cA_n$ to be the event that there exists a $\times$-loop of $h \geq 2$ in the annulus $A_n:=\Lambda_{2n} \setminus \Lambda_n$ and let $a_n := \phi_{\Lambda_{5n}}^0[\cA_n]$. We also let $\cA_n(x)$ denote the event $\cA_n$ shifted by $(x, 0)$. Finally, we introduce $\Lambda_n(x)$ and $A_n(x)$ for the box $\Lambda_n$ and the annulus $A_n$ shifted by $(x,0)$.

\begin{remark}\label{rmk:upper}
By Proposition~\ref{prop:0} and duality \cref{lem:duality}, we have that $a_n\le 1-c$ for some constant $c>0$ independent of $n$.
\end{remark}

\begin{proposition}\label{prop:renormalization}
There exists  a constant $C>0$ such that for all $n \ge 1$,
\begin{equation}
a_{10n} \leq C a_n^2. \label{eq:renormalization}
\end{equation}
\end{proposition}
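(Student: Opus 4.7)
The plan is to follow the renormalization scheme of \cite{DST}, with modifications addressing the difficulty (explained at the start of this subsection) that an $h\ge 2$ crossing may take values far above $2$. Assume $a_n$ is below a small threshold (otherwise $a_{10n}\le 1$ already yields the bound for an appropriate constant $C$).

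The starting reduction is the geometric observation that if $\cA_{10n}$ occurs, the $h\ge 2$ $\times$-loop surrounding $\Lambda_{10n}$ must provide horizontal $\times$-crossings of $h\ge 2$ in the top strip $S^+:=[-20n,20n]\times[14n,16n]$ and the bottom strip $S^-:=[-20n,20n]\times[-16n,-14n]$. Denote these events by $\cE^+$ and $\cE^-$; then $a_{10n}\le\phi^0_{\Lambda_{50n}}[\cE^+\cap\cE^-]$. I would then condition on $h$ restricted to the middle strip $M:=[-20n,20n]\times[-n,n]$, which by the spatial Markov property makes $\cE^+$ and $\cE^-$ conditionally independent. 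The task reduces to bounding each conditional probability by $Ca_n$ uniformly over the middle boundary data, so that the product gives $Ca_n^2$ after integration.

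To control $\phi[\cE^+\mid M]$ (and symmetrically $\phi[\cE^-\mid M]$), I would split according to whether the crossing in $S^+$ stays below a threshold $g=g(a_n)$. On the bounded subevent $\cE^+_g$ where heights do not exceed $g$, the event is essentially measurable with respect to $|h|$ in a way that lets me apply the FKG comparison for $|h|$ (\cref{prop:FKG_modh}) to dominate the conditional measure on the upper half by one of $0/g$ type in the sense of \cref{prop:mixed}. Then \cref{thm:RSW}, together with \cref{prop:0} and \cref{prop:mixed} applied in the strip geometry, lets me complete the horizontal $h\ge 2$ crossing to a full $h\ge 2$ $\times$-loop around some translate $\Lambda_n(x)\subset S^+$ at the cost of a multiplicative factor $C(g)$; since this loop event is a translate of $\cA_n$, its probability is at most $C(g)\,a_n$. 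On the complement $\cE^+_{>g}$, the crossing must reach height $>g$, which, via the Lipschitz property of $h$ and a repeated comparison between $h\ge k$ boundary conditions pushed inward using FKG for $h$ and the spatial Markov property, costs a factor decaying like $\lambda^g$ for some $\lambda<1$ independent of $n$. Choosing $g=g(a_n)$ so that $\lambda^g\le a_n$ balances the two contributions and yields the claim after multiplication.

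The main obstacle I anticipate lies in the bounded case: one must verify that conditioning on $M$ does not destroy the FKG comparison used to dominate by $0/g$ boundary data, and that the RSW completion to a full loop can be carried out with a constant depending only on $g$ and not on $n$ nor on the random boundary produced by $M$. This is precisely the step where the height-function nature of the problem makes the argument genuinely more delicate than the random-cluster version in \cite{DST}, since crossings of $h\ge 2$ are not monotone in $|h|$ and so the comparison between boundary conditions has to be threaded through the decomposition $\cE^+=\cE^+_g\cup \cE^+_{>g}$ carefully.
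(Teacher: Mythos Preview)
Your decoupling step has a genuine gap. Conditioning on the full configuration in the middle strip $M$ does give conditional independence of $\cE^+$ and $\cE^-$, but the resulting boundary data on $\partial M$ is completely uncontrolled: it can be negative, or take arbitrarily large positive values on a set of positive probability. A \emph{uniform} bound of the form $\phi[\cE^+\mid h_{|M}]\le C a_n$ is therefore impossible (if $h_{|M}$ happens to be uniformly large, $\cE^+$ occurs with probability close to $1$), and your appeal to the FKG comparison for $|h|$ fails for the same reason: \cref{prop:FKG_modh} requires $|h|$-adapted boundary conditions, which an arbitrary realization of $h_{|M}$ does not provide. The subsequent step, comparing the loop probability under a ``$0/g$''-type measure to $a_n=\phi^0_{\Lambda_{5n}}[\cA_n]$, is also not justified: $a_n$ is defined with zero boundary conditions, and you have no monotonicity linking it to a $0/g$ measure on a different domain.

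The paper avoids this by never conditioning on an uncontrolled strip. Instead it first reduces $a_{10n}$ to $\phi^0_{\Lambda_{50n}}[\cA_n(-7n)\cap\cA_n(7n)]$, restricts to loops with values in $[2,k_0]$ for a \emph{fixed} $k_0$ (handling the complement via \cref{lem:case2}, which already gives the $a_n^2$ bound directly---so no $g=g(a_n)$ is needed), and then \emph{constructs}, with uniformly positive conditional probability, $h\le 0$ $\times$-circuits surrounding each of the two small annuli. The bounded-height restriction is what makes \cref{prop:mixed} applicable here. Because heights on the loops may be as high as $k_0$, this construction is done iteratively: one first builds separating paths with $h\le 2^{\ell_0-1}$, then $h\le 2^{\ell_0-2}$, and so on down to $h\le 0$, each step using FKG for a suitably shifted $|h|$ and \cref{prop:mixed}. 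Only after these $h\le 0$ circuits exist does one condition on them; the induced boundary conditions are then $\le 0$, so FKG for $|h|$ legitimately compares each $\cA_n(\pm 7n)$ to $\phi^0_{\Lambda_{5n}}[\cA_n]=a_n$, and the two events decouple. The point is that the separating structure must be \emph{built} to have controlled height, not merely revealed by conditioning.
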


We start with a lemma. Let $\cE^\times_{h\ge k}(n)$ be the event that there exists a $\times$-cluster of $h\ge k$ of diameter at least $n$.

\begin{lemma}\label{lem:case2}
There exists $\rho>0$ such that the following holds.
For any $r>10$, there exists 
$C = C(r)>0$ such that for any $k$ and $n$, \begin{equation}\label{eq:case2claim}
\phi_{\Lambda_{r n}}^0[\cE^\times_{h\ge k}(n)] 
\leq  (Ca_{n})^{k/(8\rho)}.
\end{equation}
\end{lemma}

\begin{figure}[h]
\centering
\includegraphics[scale = 0.5]{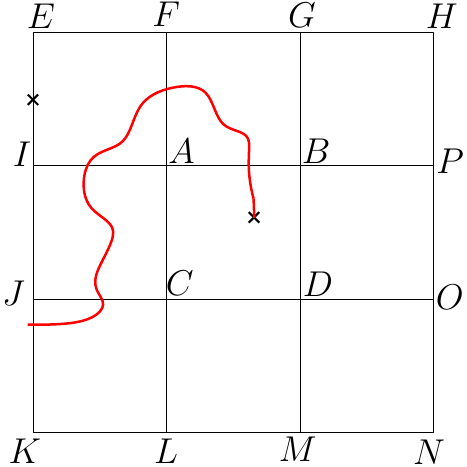}
\caption{The red curve has diameter at least $n$ and hence has to exit the square EKNH. Without loss of generality let the curve exit the square ABCD for the last time through AB before it exits EKNH. After this assume it exits the square AFGB through AF or GB, since otherwise we are done and without loss of generality assume it is AF. Then we can assume it exits IBGE through IA since otherwise we are also done. After this, the curve must necessarily include an easy crossing of either IJDB or EJCF before exiting EKNH.}\label{fig:large_easy}
\end{figure}

\begin{proof}
In any configuration of $\cE^\times_{h\ge k}(n)$, there must exist $k/2$ nested $\times$-loops with increasing heights $2i\le k$.
Let $D_{2i}$ be the interior of the outer-most $\times$-loops of $h=2i$ in $\Lambda_{rn}$ and let $\kappa_i$ denote the boundary condition equal to $2i$ on $D_{2i}$. On the event described above, the  domains $D_{2i}$ exist for every $2i\le k$ and if the diameter of the largest connected component of these domains is denoted by $d_{2i}$, we find that 
\[
\phi_{\Lambda_{rn}}^0[\cE^\times_{h\ge k}(n)] 
 \leq \phi_{\Lambda_{rn}}^0\big[ \mathbbm1_{d_2\geq n} \phi^{\kappa_2}_{D_2}[ \mathbbm 1_{d_4 \geq n} \phi^{\kappa_4}_{D_4}[\cdots] ] \big].
\]
Using the symmetry in $D_{2i}$ with respect to $2i$ and the FKG for $|h-2i|$, we may upper bound each expectation by $2\phi^0_{\Lambda_{rn}}[d_2 \geq n ]$ (the factor $2$ arises from switching between $|h|$ and $h$), giving overall
\begin{equation}\label{eq:hh1}
\phi_{\Lambda_{r n}}^0[\cE^\times_{h\ge k}(n)] 
\le (2\phi_{\Lambda_{rn}}^0[d_2 \geq n ])^{k/2}.
\end{equation}
 Consider the set $T$ of translates of rectangles included in $\Lambda_{rn}$ of sizes $n \times n/2$ and $n/2 \times n$ by  vertices in $\tfrac n2\Z^2$. A topological argument (see Figure~\ref{fig:large_easy}) easily implies that if $d_2\ge n$, there exists a rectangle in $T$ that is crossed in the `easy' direction, meaning vertically if it has size $n\times n/2$ or horizontally if it has size $n/2 \times n$. For this reason, in order to bound $\phi_{\Lambda_{rn}}^0[d_2 \geq n ]$, it suffices to consider a rectangle $R$ in $T$, which we assume without loss of generality has size $n\times n/2$, and to prove that there exists $C_0>0$ such that 
\begin{equation}\label{eq:hh2}\phi_{\Lambda_{r n}}^0[\cV_{h=2}^\times(R)]\le C_0 a_n^{1/(4 \rho)}.\end{equation}
Let $\cA$ be the event that there exists a $\times$-circuit of $h\le 0$ surrounding $R$ in the $n$ neighbourhood of $R$ (if $R$ intersects the boundary, the $\times$-circuit can use the boundary of $\Lambda_{rn}$, which has value 0). We have that 
\begin{equation}
\phi_{\Lambda_{r n}}^0[\cV_{h=2}^\times(R)]\leq 
\frac{ \phi_{\Lambda_{rn}}^0 [\cV_{h\ge2}^\times(R) | \cA]}{ \phi_{\Lambda_{rn}}^0[\cA |\cV_{h=2}^\times(R)]}\le \frac{ \phi_{\Lambda_{2 n}}^0 [\cV_{h\ge2}^\times(\Lambda_{n, n/2} )]}{ \phi_{\Lambda_{rn}}^0[\cA |\cV_{h=2}^\times(R)]}\le C_1 \phi_{\Lambda_{2n}}^0 [\cV_{h\ge2}^\times(\Lambda_{n,n/2} )]. \label{eq:delta_shortening}
\end{equation}
Indeed, the first inequality follows from inclusion. The second holds because the $\times$-loop of $h \le 0$  induced by $\cA$ can be replaced by a $\times$-loop of $h=0$ by FKG for $h$ and then pushed away using FKG for $|h|$ (we already presented several arguments like that and omit the details).
In the third inequality, we lower bound the probability of the denominator as follows. Condition on $|h-2|$ in the even vertices $R^{\rm even}$ in $R$. Any realization of this conditioning is a measure of the form $\phi_{\Lambda_{rn}\setminus R^{\rm even}}^\kappa$ with $\kappa$ being  $|h|$-adapted (the intervals are containing one value on $\partial\Lambda_{rn}$ and two symmetric values in $R^{\rm even}$). Since the single-valued vertices all receive value $h=0$, we may use the comparison between boundary conditions with $h=2$ on $\partial R^{\rm even}$ to bound the conditional probability from below by the $\phi_{\Lambda_{rn}\setminus R^{\rm even}}^0$-probability in $\Lambda_{rn}\setminus R^{\rm even}$ with boundary condition equal to 2 on $\partial R^{\rm even}$ and $0$ in $\partial\Lambda_{rn}$. Using \cref{prop:mixed}, we have a positive probability that $\cA$ occurs, hence the third inequality (we can bring in the 0 boundary using FKG for $|h|$ to apply \cref{prop:mixed} four times, followed by FKG inequality).
 
Now, Theorem~\ref{thm:RSW} implies that
\begin{equation}\label{eq:hhh1}
\phi_{\Lambda_{5n, 2n}}^0 [\cH_{h\ge2}^\times(\Lambda_{2n, n/2})] 
\ge c_0\phi_{\Lambda_{2n}}^0 [\cV_{h\ge2}^\times(\Lambda_{n, n/2} )]^\rho.
\end{equation}
Finally, FKG for $h$  implies that 
\begin{equation}\label{eq:hhh2}
a_{n}\ge \phi_{\Lambda_{5n,2n}}^0 [\cH_{h\ge2}^\times(\Lambda_{2n, n/2} )]^{4}.
\end{equation}
Equations  \eqref{eq:delta_shortening} and \eqref{eq:hhh1} and \eqref{eq:hhh2} can be combined to deduce \eqref{eq:hh2}; this, in turn, can be combined with \eqref{eq:hh1} to imply \eqref{eq:case2claim} and complete the proof.\end{proof}

\begin{figure}[h]
\centering
\includegraphics[width=0.9\textwidth]{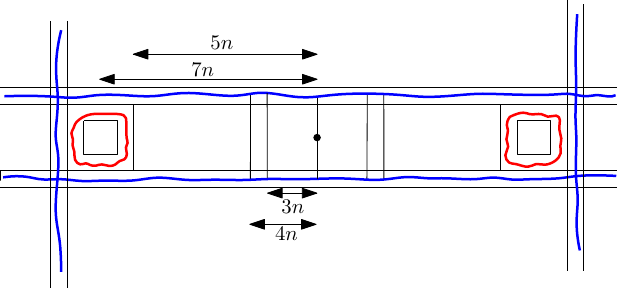}
\caption{The red paths are even $\times $-loops of $h \ge 2$ coming from the event $\cA_n(-7n) \cap \cA_n(7n)$. The blue paths are $\times$-paths of $h \le 0$ coming from the events \eqref{eq:A1}--\eqref{eq:A4} (i.e.~the event $\cC$).}\label{fig:renorm}
\end{figure}
\begin{proof}[Proof of Proposition~\ref{prop:renormalization}]
We start by claiming that there exists $c_1>0$ such that for all $n \ge 1$,
\begin{equation}\label{eq:emain}
a_{10n} \leq c_1 \phi_{\Lambda_{50n}}^0 [ \cA_n(-7n) \cap \cA_n(7n) ]
\end{equation}
since $\cA_{10n}$ implies that there exists a $\times$-loop of $h=2$ in  $\Lambda_{50n} \setminus \Lambda_{10n}$  and hence we can apply Proposition~\ref{prop:0}.

Define events $\cB_n(x)$ similarly to $\cA_n(x)$, where we add the restriction that the $\times$-loops must satisfy $2 \leq h \leq k_0:=2^{\ell_0}$, where $k_0>16\rho$ with $\rho$ provided by Lemma~\ref{lem:case2}. With this choice of $k_0$, Lemma~\ref{lem:case2} gives that  
\[
\phi_{\Lambda_{50n}}^0 [\cA_n(\pm7n) \setminus \cB_n(\pm7n)]\le \phi_{\Lambda_{50n}}^0[\cE_{h\ge k_0}^\times(n)]\le C_1a_n^2.
\]
In order to conclude the proof, we now need to prove that 
\begin{equation}\label{eq:est1}
 \phi_{\Lambda_{50n}}^0[ \cB_n(-7n) \cap \cB_n(7n) ]\le C_2a_n^2.
\end{equation}
Suppose we are on the event $\cB_n(-7n) \cap \cB_n(7n)$ and let $\ell_{\pm}$ be the two innermost $\times$-loops with values in $2 \le h \le k_0$ in $\Lambda_n(\pm7 n)$.

Consider the event $\cC$  (see \cref{fig:renorm}) which is the intersection of the following four events
\begin{align}
\cH_{h\le 0}^\times([-50 n, 50 n]\times[2n, 3n]), \label{eq:A1} \\
\cH_{h\le 0}^\times([-50 n, 50 n]\times[-3n, -2n]),\label{eq:A2}\\
\cV_{h\le0}^\times([-10n,-9 n]\times[-50n, 50n]),\label{eq:A3}\\
\cV_{h\le 0}^\times([9n,10 n]\times[-50n, 50n]). \label{eq:A4}
\end{align}
Conditionally on $\ell_+$ and $\ell_-$ (which involves information on $h$ inside the two loops only), we claim that the $\phi_{\Lambda_{50n}}^0$-probability of $\cC$ is at least $c_3>0$. Let us provide a lower bound on the conditional probability of the first event, since the bound for the others is similar and that one can use FKG to deduce a bound on $\phi_{\Lambda_{50n}}^0[\cC]$. 
Since the values of $h$ on the loops $\ell_+$ and $\ell_-$ are between $2$ and $k_0$ {we can first assume that the value is uniformly $k_0$ on $\ell+ $ and $\ell_-$ by FKG for $h$ since this lowers the probability of the $h \le 0$ crossing by FKG for $h$ (this is where the upper bound of $k_0$ is crucially used.)} Next, the FKG inequality for $|h-k_0|$ enables us to bound the probability of the first event in $\cC$ from below if we assign (the translate of) boundary condition $0/k_0$ on the rectangle $[-50 n, 50 n]\times[2n, 50n]$ as in \cref{prop:mixed} {(we interpolate the values between $k_0$ and 0 near the corners of the rectangle as in \cref{prop:mixed}, and it is easy to see that this is the worst possible boundary value for the required event in terms of FKG for absolute value)}. In other words, it is enough to prove the lower bound for the same event but in the domain $[-50 n, 50 n]\times[2n, 50n]$ with $0/k_0$ boundary condition, which is exactly what is given by \cref{prop:mixed}. 

\begin{remark}Note that this step crucially relies on the fact that the values on $\ell_+$ and $\ell_-$ are bounded between $2$ and $k_0$ since applying FKG for $|h-k_0|$  requires all the boundary values to have the same sign in \cref{prop:FKG_modh}. 
 \end{remark}
Overall, the  argument in the previous paragraph gives us the existence of $c_4>0$ such that for all $n$,
\begin{equation}
\phi_{\Lambda_{50n}}^0 [\cC| \cB_n(-7n) \cap \cB_n(7n) ] \ge c_4.\label{eq:surround}
\end{equation}

\bigbreak
On $\cC \cap \cB_n(-7n) \cap \cB_n(7n)$, let $\Omega$ be the connected component of the origin inside the outermost realisations of the crossings in \eqref{eq:A1}, \eqref{eq:A2}, \eqref{eq:A3}, \eqref{eq:A4} minus the loops $\ell_+$ and $\ell_-$ (see \cref{fig:renorm}) {(we do a standard Markovian exploration from outside inwards to achieve this)}. As in \cite{DST}, we want to separate $\ell_+$ and $\ell_-$ with an $h \leq 0$ $\times$-path. However, since the values on $\ell_-$ and $\ell_+$ can be as high as $k_0$, we need several steps to find this path. We do so iteratively, each time dividing the value of the separating path by a factor of 2. 

We now provide the details. Let $R_-$, $R_0$ and $R_+$ be the subsets of $\Omega$ made of vertices with first coordinates in $[-4n,-3n]$, $[-3n,3n]$, and $[3n,4n]$ respectively. We write $\cV_{h\le 2^{k}}^\times(R_\#)$ for the existence of  a vertical $\times$-crossing of this quad between the bottom and top boundaries of $\partial\Omega$. Let
$$\cD(k):=\cC\cap \cV_{h\le 2^{k}}^\times(R_-)\cap\cV_{h\le 2^{k}}^\times(R_+),$$
and on this event, set $\Omega_k$ to be the part of $\Omega$ between the left-most vertical $\times$-crossing of $h\le 2^k$ of $R_-$ and the right-most vertical $\times$-crossing of $R_+$. We also use the conventions $\cD(\ell_0)=\cC$ and $\Omega_{\ell_0}:=\Omega$. 

We wish to show iteratively that there exist $c_{\ell_0},\dots,c_1>0$ such that for every $1\le k< \ell_0$,
\begin{equation}\label{eq:Dk}\phi_{\Lambda_{50n}}^0[\cD(k)|\cC\cap \cB_n(-7n) \cap \cB_n(7n)]\ge c_k.\end{equation}
Fix $k\ge1$ and assume that the previous result was obtained {for every $\ell_0 \ge k'>k$}. The boundary condition induced on $\Omega_k$ is such that
$$\cV_{h\le 2^{k}}^\times(R_0)=\cV_{|h-2^{k+1}|\ge 2^{k}}^\times(R_0)$$
since the sign of the path must be the same as the boundary by the third item of Lemma~\ref{lem:duality}. Therefore, we can put boundary conditions $2^{k+1}$ on the left and right sides of $R_0$ using FKG for $|h|$. Rewriting this event as $|h| \le 2^{k}$ using Lemma~\ref{lem:duality} again, we can push out the zeros on $\partial\Omega$ to the top and bottom boundaries of $R_0$ ({again, it is easy to check that if $n\ge 2^{k+1}$, it is possible to design boundary conditions that interpolate between $2^{k+1}$ and $0$ in a symmetric fashion in the corners but we voluntarily suppress this issue for the sake of clarity).} By duality (like in the argument for the first case of Proposition~\ref{prop:weak}), we deduce that 
$$\phi_{\Lambda_{50n}}^0[\cV_{h\le 2^{k}}^\times(R_0)|\cD(k+1) \cap \cB_n(-7n) \cap \cB_n(7n)]\ge \tfrac12,$$ 
which together with the induction hypothesis gives
$$\phi_{\Lambda_{50n}}^0[\cV_{h\le 2^{k}}^\times(R_0)|\cC \cap \cB_n(-7n) \cap \cB_n(7n)]\ge \tfrac12c(k+1).$$
On this event, let $\Omega_+$ be the subregion of $\Omega$ on the right of the left-most vertical $\times$-crossings of $R_0$ of $h\le 2^{k}$. Conditionally on $\Omega_+$, we can make the event $\cV_{h\le 2^{k}}^\times(R_+)$ less probable by putting boundary conditions $2^{k}$ to the left, top and bottom sides of $[-3n,4n]\times[-3n,3n]$, and $k_0$ to the right (which is again made compatible near the corners and the parity chosen appropriately) by using 
\begin{itemize}[noitemsep,nolistsep]
\item  FKG for $|h-k_0|$ ($\cV_{h\le 2^{k}}^\times(R_+)=\cV_{|h-k_0|\ge k_0-2^k}^\times(R_+)$ is increasing in $|h-k_0|$ for the boundary conditions on $\Omega_+$) to put $h=k_0$ boundary conditions on the right of $\Omega_+':=\Omega_+\cap(R_0\cup R_+)$, 
\item Comparison between boundary conditions for $h$ ($\cV_{h\le 2^{k}}^\times(R_+)$ is decreasing in $h$) to put $h=2^k$ on the rest of $\partial\Omega_+'$,\item FKG for $|h-2^k|$ ($\cV_{h\le 2^{k}}^\times(R_+)=\cV_{h= 2^{k}}^\times(R_+)$ is decreasing for $|h-2^k|$ for the boundary conditions) to push the boundary conditions to the top, left, and bottom of the rectangle $[-3n,4n]  \times [-3n,3n]$.\end{itemize}
We can apply \cref{prop:mixed} to get that 
\begin{equation}\label{eq:o1}\phi_{\Lambda_{50n}}^0[\cV_{h\le 2^k}^\times(R_+)|\cV_{h\le 2^{k}}^\times(R_0)\cap\cC \cap \cB_n(-7n) \cap \cB_n(7n)]\ge c_1.\end{equation}
Similarly, one can condition on the right of the right-most crossing of $\cV_{h\le 2^{k}}^\times(R_0)$ to get
\begin{equation}\label{eq:02}\phi_{\Lambda_{50n}}^0[\cV_{h\le 2^k}^\times(R_-)|\cV_{h\le 2^k}^\times(R_+)\cap\cV_{h\le 2^{k}}^\times(R_0)\cap\cC \cap \cB_n(-7n) \cap \cB_n(7n)]\ge c_1.\end{equation}
 Forgetting the occurrence of $\cV_{h\le 2^{k}}^\times(R_0)$, we deduce \eqref{eq:Dk} for $c(k):=\tfrac12c_1^2c(k+1)>0$.
 \begin{figure}[h]
\centering
\includegraphics[width = \textwidth]{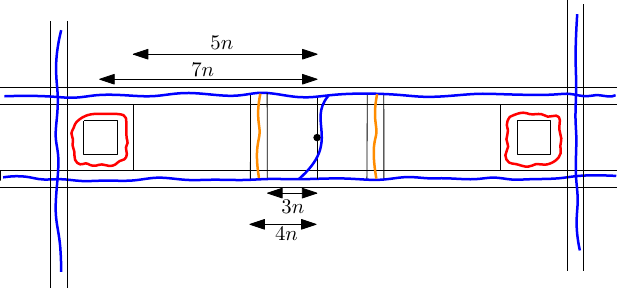}
\caption{The event $\cD(k)$ involves finding the orange $\times$-paths depicted above taking value at most $2^k$. Given $\cD(1)$, we wish to find a  blue $\times$-path between them taking value at most 0 using the bridging \cref{prop:weak}.}\label{F:renorm1}
\end{figure}
\bigbreak
Now that we have the existence of $c(1)>0$, suppose we are on $\cD(1)\cap \cB_n(-7n) \cap \cB_n(7n)$. The first case of Proposition~\ref{prop:weak} in $\Omega_1$ implies that with probability $1/2$, one can construct vertical $\times$-crossings of $h \le 0$  in $R:=R_-\cup R_0\cup R_+$.
Forgetting about the occurrence of $\cD(1)$ and conditioning on the left-most $\times$-crossing of $h\le0$ in $R$, we can again construct a domain $\Omega^+$ and this time deduce the existence of a $\times$-crossing of $h\le 0$ in $[4n,5n]\times[-3n,3n]$ via a reasoning similar to the one leading to \eqref{eq:o1}. Then, one conditions on the right-most $\times$-crossing and deduces a similar claim for $[-5n,-4n]\times[-3n,3n]$. Overall, if $\cE$ is the event that $A_{2n}(-7n)$ and $A_{2n}(7n)$ contain $\times$-circuits of $h\le 0$, the previous reasoning together with \eqref{eq:Dk} gives that
\begin{align}\label{eq:est3}
\phi_{\Lambda_{50n}}^0[\cC\cap\cB_n(-7n) \cap \cB_n(7n)] & \le C_2\phi_{\Lambda_{50n}}^0 [\cE\cap\cA_n(-7n) \cap \cA_n(7n)].
\end{align}
As in \cite{DST}, conditioned on the outer-most circuits of $h\le0$ in $\Lambda_{5n}(-7n)$ and $\Lambda_{5n}(7n)$, the FKG for $|h|$ implies that $\cA_n(-7n)$ and $\cA_n(7n)$ are decoupled events and the probability of $\cA_n(-7n)$ and $\cA_n(7n)$ are each bounded by $\phi_{\Lambda_{5n}}^0[\cA_n]$, so that  
\[
\phi_{\Lambda_{50n}}^0 [\cE\cap\cB_n(-7n) \cap \cB_n(7n)]  \le Ca_n^2.
\]
Combining this inequality with \eqref{eq:surround} and \eqref{eq:est3}, we obtain \eqref{eq:est1}, a fact which conclude the proof.
\end{proof}

\subsection{Proofs of the theorems}\label{sec:4.3}

\begin{proof}[Proof of Theorem~\ref{thm:dichotomy}]
Assume that there exists $m_0$ such that $a_{m_0}  < (2C)^{-1}$ with $C$ being the constant from the renormalisation equation (\cref{prop:renormalization}). Iterating \eqref{eq:renormalization}, there exist $c_1,C_1>0$ such that for every $r\in\bbZ_+$,
\begin{equation}\label{eq:po2}
a_{10^rm_0} \le C_1\exp(-c_12^r) .
\end{equation}
Using RSW and FKG (similarly to the proof of \cref{lem:case2}), there exist $C_0,\rho_0>0$ such that  for all $m \ge 1$,
\begin{equation}\phi_{\Lambda_{5m}}^0[\cV_{|h|\ge1}(\Lambda_{2m,m/2})]\le C_0\phi_{\Lambda_{10m}}^0[\exists\text{ circuit of $|h|\ge1$ surrounding $\Lambda_m$}]^{\rho_0}.\label{eq:po}\end{equation}
 Now, consider $m'$ to be the smallest integer of the form $20^rm_0$ which is larger than $20m$. Using the FKG inequality for $|h|$, one may combine such circuits in $\Lambda_{20m}$ to get the existence of $C_1,\rho_1$ such that
 $$\phi_{\Lambda_{20m}}^0[\exists\text{ circuit of $|h|\ge1$ surrounding $\Lambda_m$}]\le C_1\phi_{\Lambda_{5m'}}^0[\exists\text{ circuit of $|h|\ge1$ surrounding $\Lambda_{2m'}$}]^{\rho_1}.$$
Conditioning on the exterior-most $\times$-circuit of 1 and using FKG for $|h-1|$ and \eqref{eq:po}, we deduce that there exist $C_3,\rho_3>0$ such that 
$$
\phi_{\Lambda_{5m}}^0[\cV_{|h|\ge1}(\Lambda_{2m,m/2})] \le C_3a_{m'}^{\rho_3}.
$$
Together with \eqref{eq:po2}, we find that there exist $C_2,c_2>0$ such that for every $m$,
$$\phi_{\Lambda_{5m}}^0[\cV_{|h|\ge1}(\Lambda_{2m,m/2})]\le C_2\exp(-m^{c_2}).$$
Now, the first item of Lemma~\ref{lem:duality} can be trivially adapted to state that  there is a crossing of $|h|\ge1$ from $\partial\Lambda_m$ to $\partial\Lambda_{2m}$ if and only if there is no $\times$-loop of $h=0$ in $A_m$ surrounding 0, so that
 the $\phi_{\Lambda_{5m}}^0$-probability of this event is bounded by $4\phi_{\Lambda_{5m}}^0[\cV_{|h|\ge1}(\Lambda_{2m,m/2})]\le 4C_2\exp(-m^{c_2})$.

The event that $|h (0)| \ge 2k$ implies that there is no $\times$-loop of $0$ (in fact no 0 at all) in $A_k$ surrounding 0.  As a consequence, there exists an integer $r\ge 0$ such that there is no $\times$-loop of $h=0$ in $A_{2^rk}$ surrounding 0 but there is one in $A_{2^{r+1}k}$. Conditioning on the exterior-most such loop and using the FKG inequality (to push the zero in) and the estimate above, we deduce that 
$$\phi_{\Lambda_n}^0[|h (0)| \ge 4k]\le \sum_{r\ge 0} 4C_2\exp(-(2^rk)^{c_2})\le C_3\exp(-k^{c_3}).$$

We now assume that $a_n\ge (2C)^{-1}$ for every $n$. Note that the first inequality follows trivially from the second one applied to $k+2$ so we only focus on the second inequality. Fix $\ep,\rho>0$ and $k$.

First,
 observe that using loops in successive annuli, there exists a constant $c_0=c_0(k)>0$ such that for all $n$, 
 $$\phi_{\Lambda_{\ep n}}^0[\exists\text{ $\times$-loop in $A_{\ep2^{-k-1} n}$ of $h\ge k$}]\ge c_0.$$
Define the annulus
$A:=\Lambda_{(\rho+\ep)n,n}\setminus \Lambda_{(\rho+\ep/2)n,n/2}$. 
The FKG inequality and the  concatenations of small $\times$-loops with value $h\ge k$ give the existence of $c_1=c_1(k,\ep,\rho)>0$ such that 
$$\phi_D^0[\exists\text{ $\times$-loop in $A$ of $h\ge k$}]\ge c_1.$$
Remark~\ref{rmk:upper} and 
Lemma~\ref{lem:case2} enable us to fix $k_0$ sufficiently large that
$$\phi_D^0[\cE^\times_{h\ge k_0}(\ep n)]\le \tfrac12c_1.$$
Altogether, we conclude that 
$$\phi_D^0[\exists\text{ $\times$-loop in $A$ of $h\in[k,k_0)$}]\ge \tfrac12c_1.$$
Condition on the exterior-most such $\times$-loop and let $\Omega$ be the domain inside. The boundary conditions induced by the conditioning are between $k$ and $k_0$.  Now, combining small $\times$-loops of $h\le k$, we may construct a crossing of $\Lambda_{\rho n,n/2}$ with probability $c_2=c_2(\ep,\rho,k,k_0)>0$. If this happens, we automatically obtain a $\times$-crossing of $R$ of $h=k$. We deduce that this occurs with probability $\tfrac12c_1c_2$, which is independent of $n$ as desired. 
\end{proof}


We now prove logarithmic bounds for the variance of the height function. We first prove them in a box $\Lambda_n^{\rm even}$ introduced in Section~\ref{sec:3.2} (a similar statement can easily be obtained in a generic domain).

\begin{proposition}
There exist $c,C>0$ such that for every $n$,
$$c\log n\le \phi_{\Lambda_n^{\rm even}}^0[h_0^2]\le C\log n.$$
\end{proposition}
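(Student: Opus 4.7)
Both bounds exploit a single input: combining Corollary~\ref{cor:main}, Proposition~\ref{prop:0}, Theorem~\ref{thm:RSW}, and FKG for $|h|$, there exists $c>0$ such that for every dyadic scale $R=2^j$ with $2R\le n$ and every even domain $D\supset\Lambda_{2R}^{\rm even}$,
\[
c\;\le\;\phi_D^0\bigl[\exists\,\times\text{-circuit of } h=0 \text{ in } A_R \text{ surrounding } 0\bigr]\;\le\;1-c.
\]
The lower bound comes from Proposition~\ref{prop:0} by gluing four $h=0$ rectangle crossings with FKG for $|h|$; the upper bound follows by duality and the $\pm$-symmetry argument used in Proposition~\ref{prop:weak}. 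Analogous estimates hold for $\times$-circuits of $h\ge 2$ at every scale. This uniform dyadic structure is the engine of both inequalities.

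The plan for both directions is a martingale decomposition of $h_0$. Set $R_j:=2^j$, $R_0:=0$, $J:=\lfloor \log_2 n\rfloor$, and let $\mathcal{G}_j := \sigma(h|_{\Lambda_n^{\rm even}\setminus\Lambda_{R_j}^{\rm even}})$. Define $M_j:=\phi_{\Lambda_n^{\rm even}}^0[h_0\mid \mathcal{G}_j]$. The filtration $\mathcal{G}_0\supset\cdots\supset\mathcal{G}_J$ is decreasing, so $(M_j)_j$ is a reverse martingale with $M_0=h_0$ and $M_J=0$ (the latter by the global $\pm$-symmetry of $\phi_{\Lambda_n^{\rm even}}^0$), and by orthogonality of martingale increments,
\[
\phi_{\Lambda_n^{\rm even}}^0[h_0^2] \;=\; \sum_{j=0}^{J-1}\phi^0\bigl[(M_j-M_{j+1})^2\bigr].
\]
It therefore suffices to prove matching upper and lower bounds of order $O(1)$ on each individual summand, uniformly in $j$.

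For the upper bound, we show $\phi^0[(M_j-M_{j+1})^2\mid \mathcal{G}_{j+1}]\le C$ uniformly. Conditional on $\mathcal{G}_{j+1}$ the increment depends only on the extra randomness inside the annular shell $\Lambda_{R_{j+1}}^{\rm even}\setminus\Lambda_{R_j}^{\rm even}$; combining FKG for $|h|$ with the sub-Gaussian-type tail provided by Lemma~\ref{lem:case2} gives the needed uniform bound, since the map from the shell configuration to $\phi^0[h_0\mid \cdot]$ is $1$-Lipschitz in the boundary values and those values have $O(1)$ variance given $\mathcal{G}_{j+1}$. Summing over $j$ yields $\phi^0[h_0^2]\le C\,J\le C'\log n$.

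For the lower bound we show each increment has conditional variance $\ge c$ by a ``shifting'' construction. Given $\mathcal{G}_{j+1}$, apply Corollary~\ref{cor:main} to the scale-$R_{j+1}$ sub-problem (centered at the average of $h$ on $\partial\Lambda_{R_{j+1}}^{\rm even}$) to locate two disjoint events in the shell, related by the $\pm$-symmetry around this centering, on which the values of $h$ on $\partial\Lambda_{R_j}^{\rm even}$ differ by at least $2$ in a specified monotone manner; by CBC for $h$ (Proposition~\ref{prop:FKG_h}), the monotone dependence of $\phi^0[h_0\mid \cdot]$ on the inner boundary values then forces two realizations of $M_j$ at distance $\ge 2$ each with conditional probability $\ge c$. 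Summing over $j\asymp \log n$ yields $\phi^0[h_0^2]\ge c\log n$. The main obstacle is making this shifting argument uniform: one must certify the $\ge 2$ spread independently of the outer configuration, which requires re-running the RSW machinery of Section~\ref{sec:3} inside the annular geometry together with the $\pm$-symmetry, to ensure that B2-type crossing estimates survive conditioning on the outer shell. A naive iteration of level-by-level estimates fails precisely because the constants in such a level-by-level chain degrade with the inner scale (reflecting that $\phi^0[h_0=0]$ itself decays in $n$), and the martingale decomposition is what turns this degradation into the correct logarithmic growth.
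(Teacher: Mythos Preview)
Your martingale decomposition is a natural idea, but both halves have genuine gaps, and the paper proceeds quite differently.

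\textbf{Lower bound.} Your argument requires that, conditionally on $\mathcal{G}_{j+1}$, the increment $M_j-M_{j+1}$ has variance bounded below by a universal $c>0$. But given $\mathcal{G}_{j+1}$, the boundary condition on $\partial\Lambda_{R_{j+1}}^{\rm even}$ is an \emph{arbitrary} height function $\xi$, and the conditional law $\phi_{\Lambda_{R_{j+1}}^{\rm even}}^{\xi}$ has no $\pm$-symmetry whatsoever: centering at the ``average of $h$ on $\partial\Lambda_{R_{j+1}}^{\rm even}$'' does not make the boundary symmetric, so the two events you describe are not related by any symmetry of the conditional measure. Corollary~\ref{cor:main} (equivalently B2) is stated and proved only for boundary condition $0$ on an even domain, and the RSW machinery of Section~\ref{sec:3} uses very specific $0/2$ or $1/2$ boundary data; it does not survive conditioning on an arbitrary $\xi$. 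You acknowledge this as ``the main obstacle'' but the phrase ``re-running the RSW machinery \ldots\ together with the $\pm$-symmetry'' is not a proof: the symmetry simply isn't there. The paper avoids this entirely by conditioning not on a deterministic box but on a \emph{random stopping domain}, namely the interior $\Omega_n$ of the outermost $\times$-loop of $|h|=2$ in $A_n$. On the event $\cG_n$ that such a loop exists (probability $\ge c$ by Corollary~\ref{cor:main}), the induced boundary condition on $\partial\Omega_n$ is exactly $\pm2$, hence after a shift one is back at boundary condition $0$ and the FKG/CBC inputs apply cleanly. This yields the one-line recursion $v_{2n}\ge v_n+4c$, which is iterated. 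The point is that random level-set domains, not deterministic dyadic boxes, are what restore the symmetry at each step.

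\textbf{Upper bound.} Your claim that ``those values have $O(1)$ variance given $\mathcal{G}_{j+1}$'' is again a statement about the height under an arbitrary boundary condition $\xi$, and Lemma~\ref{lem:case2} is only stated for boundary condition $0$; you would need to reduce to that case, which is exactly the difficulty. The paper instead bounds $\phi^0[h_0\ge N]$ directly by a combinatorial argument: if $h_0\ge N$ then there are $N/2$ nested $\times$-loops $\ell_k$ of $h=2k$, and one controls how many such loops can share a given dyadic scale by showing (again exploring down to level-set loops to recover $0$ boundary conditions) that each additional loop at the same scale costs a multiplicative factor $(1-c_0)^r$. Summing over partitions of $N$ among $\log_2 n$ scales gives a stretched-exponential tail for $N\ge C\log n$.

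In short, the recurring issue is that conditioning on a deterministic dyadic boundary destroys the symmetry and the $0$-boundary-condition inputs on which all the crossing estimates rest; the paper's proofs work because they always explore down to a level-set circuit first, so that the induced boundary is constant and one can shift and reuse the $0$-boundary machinery.
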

\begin{proof}
Let $v_n = \phi_{\Lambda_n^{\rm even}}[h_0^2]$. We start with the lower bound. Let  $\cG_{n}$ be the event that there is a $|h|=2$ $\times$-loop inside $A_n$ which by \cref{cor:main} has $\phi_{\Lambda_{2n}^{\rm even}}^0$-probability at least $c$ independent of $n$. On $\cG_n$, call the vertices lying inside the outermost $|h| =2$ $\times$-loop $\Omega_n$. The bound $v_n \ge c \log n$ for some $c>0$ follows by iterating the inequality
\begin{align*}
v_{2n } &= \phi_{\Lambda_{2n}^{\rm even}}^0[h_0^2 1_{\cG_{n}}] + \phi_{\Lambda_{2n}^{\rm even}}^0[h_0^2 1_{\cG^c_{n}}]\\
&=\phi_{\Lambda_{2n}^{\rm even}}^0 \big[\phi_{\Omega_n}^0[(h_0+\xi)^2]1_{\cG_n}\big] +\phi_{\Lambda_{2n}^{\rm even}}^0\big[\phi_{\Lambda_{2n}^{\rm even}}^0[h_0^2|h_{|\partial\Lambda_{n}^{\rm even}}]1_{\cG^c_n}\big]\\
&=\phi_{\Lambda_{2n}^{\rm even}}^0 \big[(\phi_{\Omega_n}^0[h_0^2]+4)1_{\cG_n}\big] +\phi_{\Lambda_{2n}^{\rm even}}^0\big[\phi_{\Lambda_{2n}^{\rm even}}^0[h_0^2|h_{|\partial\Lambda_{n}^{\rm even}}]1_{\cG^c_n}\big]\\
&\ge (\phi_{\Lambda_{n}^{\rm even}}^0 [h_0^2]+4) \phi_{\Lambda_{2n}^{\rm even}}^0 [\cG_n] + \phi_{\Lambda_n^{\rm even}}[h_0^2]\phi_{\Lambda_{2n}^{\rm even}}^0 [\cG^c_n]\\
&= v_n + 4 \phi_{\Lambda_{2n}^{\rm even}}^0 [\cG_n]\ge v_n+4c.
\end{align*}
where $\xi$ is a random variable taking values $\pm 2$ with equal probability independent of everything else. The justification of this sequence of inequalities is the following. To see the second equality, note that on the event $\cG_n$ we can explore  $|h|$ until we discover $\Omega_n$. The third one follows from the spatial Markov property, the independence of $h$ and $\xi$, and the fact that $\phi^0_{\Omega_n}[h_0]=0$. The inequality follows from the comparison between boundary conditions and the FKG inequality for $|h|$.

Let us now turn to the upper bound. One can implement a proof which is quite similar to the lower bound here, but we choose a different road which extends trivially to the torus case. Consider $\ell_k$ to be the outer-most $\times$-loop of $h\ge2k$ surrounding the origin, if it exists. Also, for each $i\le \log_2 n$ (here we forget the rounding since it does not impact the rest of the proof), let $\mathbf N_i$ be the number of indexes $k$ such that the maximal distance between a vertex in $\ell_k$ and the origin is between $2^i$ and $2^{i+1}$. Observe that 
\begin{align}\label{eq:kh}
\phi_{\Lambda_n}^0[h_0\ge 2N]\le \sum_{N_1+\dots+N_{\log_2 n}=N}\phi_{\Lambda_n}^0[\mathbf N_i=N_i,\forall i\le \log_2 n].
\end{align}
We claim that for every $\ep>0$, there exists $C_0>0$ such that 
\begin{align}\label{eq:good bound}\phi_{\Lambda_n}^0[\mathbf N_i=N_i|\mathbf N_1=N_1,\dots,\mathbf N_{i-1}=N_{i-1}]\le C_0\ep^{N_i}.
\end{align}
Plugging this estimate into \eqref{eq:kh} and using that $\binom ab\le (ea/b)^b$ implies that  \begin{align*}
\phi_{\Lambda_n}^0[h_0\ge 2N]\le  (1+\tfrac {\log n}N)^N C_0^{\log_2 n}(e\ep)^{N}.
\end{align*}
Since we may choose $\ep$ as small as we wish, this quantity decays exponentially fast in $N\ge C_1\log n$, thus concluding the proof. We therefore turn to the proof of \eqref{eq:good bound}. 
\bigbreak
Fix $r>0$. Let $\Omega_k$ be the domain enclosed by $\ell_k$ and set $k_i:=N_1+\dots +N_i$. 
Pave the annulus $A_{2^i}$ by balls of size $2^{i-r}$ centred at $x_0,\dots,x_{R}$. Let $M_k$ be the number of such balls that are intersecting $\Omega_k$. We claim that there exists a constant $c_0>0$ such that for every $k\in(k_{i-1},k_i)$,
\begin{equation}\label{eq:cl}\phi_{\Lambda_n}^0[M_{k+1}=M_k>0|\ell_1,\dots,\ell_{k}]\le (1-c_0)^r\qquad\text{ a.s.}.\end{equation}
Indeed, the conditional measure is, up to a sign, equal to $\phi_{\Omega_k}^{2k}$. Now, since $M_k>0$ and $\ell_k$ intersects the annulus, one may choose $x_k$ such that the ball of radius $2^{i-r}$ around it intersects $\ell_k$. Note that $M_{k+1}=M_k>0$ imposes the occurrence, for every $1\le s\le r$, of the event  $\cE_s$ that there exists a $\times$-crossing of $h=2k+2$ in the annulus $A_s$  around $x_k$ of inner and outer radii $2^{i-s-1}$ and $2^{i-s}$. Using the FKG for $|h-2k-2|$, we therefore deduce that
\begin{equation}
\phi_{\Lambda_n}^0[M_{k+1}=M_k>0|\ell_1,\dots,\ell_{k}]\le\phi_{\Omega_k}^{2k}\big[\bigcap_{s=1}^{r}\cE_s\big]\le \prod_{s=1}^r\phi_{A_s\cap\Omega_k}^{\kappa_s}[\cE_s],
\end{equation}
where $\kappa_s$ is the boundary conditions equal to $h=2k+2$ on the inner and outer boundaries of $A_s$, and $h=2k$ on the rest of the boundary. Since the existence of the $\times$-crossing of $h=2k+2$ from inside to outside is the complement  under these boundary conditions of the existence of a $*$-path of $2k$ from $\ell_k$ to itself, we may use the FKG inequality for $|h-2k|$ and the shifting of the height-function down by $2k$, to bound the probability of the event $\cE_s$ by the event that there exists a $*$-circuit of 0 surrounding the origin in an annulus with boundary conditions 2. This probability is bounded by $1-c_0$ using Corollary~\ref{cor:main},  and we therefore obtain \eqref{eq:cl}. 

Now, if one finds $N_i$ loops with radius between $2^i$ and $2^{i+1}$, there must be at least $N_i-C_2$ indexes $k\in(k_{i-1},k_i)$ for which $M_{k+1}=M_k>0$, where $C_2$ is a function of $r$ only. We deduce that
$$\phi_{\Lambda_n}^0[\mathbf N_i=N_i|\mathbf N_1=N_1,\dots,\mathbf N_{i-1}=N_{i-1}]\le N_i^{C_2}e^{-r(N_i-C_2)}$$
which implies \eqref{eq:good bound} with $\ep$ and 
a constant $C_0=C_0(\ep)>0$ provided that we select $r$ large enough.
\end{proof}

We conclude this article with the proof of Theorem~\ref{thm:torus}.

\begin{proof}[Proof of \cref{thm:torus}] 
Fix a representative of the equivalence class of each homomorphism by setting $h(x)=0$. Using FKG for $|h|$, we deduce that 
$$\phi_{\mathbb T_n}[(h(y)-h(x))^2]=\phi_{\mathbb T_n}^{\{x\},0}[h(y)^2]\ge \phi_{\Lambda_{|x-y|}(y)}^0[h(y)^2]\ge c\log |x-y|.$$
The upper bound can be deduced by an argument similar to the one developed in the last proof (defining the circuits starting from 0 in an outward direction; any non-contractible loops which separate $0$ from $y$ are also included in this list). 

\end{proof}

\bibliographystyle{abbrv}

\begin{thebibliography}{10}

\bibitem{BHM_homo}
I.~Benjamini, O.~H{\"a}ggstr{\"o}m, and E.~Mossel.
\newblock On random graph homomorphisms into $\mathbb Z$.
\newblock {\em Journal of Combinatorial Theory, Series B}, 78(1):86--114, 2000.

\bibitem{BP94}
I.~Benjamini and Y.~Peres.
\newblock Tree-indexed random walks on groups and first passage percolation.
\newblock {\em Probab. Theory Related Fields}, 98(1):91--112, 1994.

\bibitem{BefDum12}
V.~Beffara and H.~{Duminil-Copin}.
\newblock The self-dual point of the
  two-dimensional random-cluster model is critical for $q\geq 1$.
  \newblock {\em Probab.
  Theory Related Fields}, 153(3-4):511-542, 2012.
  
  \bibitem{BS00}
I.~Benjamini and G.~Schechtman.
\newblock Upper bounds on the height difference of the {G}aussian random field
  and the range of random graph homomorphisms into {$\Bbb Z$}.
\newblock {\em Random Structures Algorithms}, 17(1):20--10, 2000.

\bibitem{BYY07}
I.~Benjamini, A.~Yadin, and A.~Yehudayoff.
\newblock Random graph-homomorphisms and logarithmic degree.
\newblock {\em Electron. J. Probab.}, 12(32):926--920, 2007.

   \bibitem{BolRio06c}
B.~Bollob{{\'a}}s and O.~Riordan.
\newblock A short proof of the {H}arris-{K}esten theorem.
\newblock {\em  Bull. London Math. Soc.}, 38(3):470--484, 2006. 

\bibitem{RS19}
G.~Ray and Y.~Spinka.
\newblock A short proof of the discontinuity of phase transition in the planar random-cluster model with {$q>4$}.
\newblock{\em arXiv:1904.10557}, 2019.

\bibitem{BolRio10}
B.~Bollob{{\'a}}s and O.~Riordan.
\newblock Percolation on self-dual polygon configurations, 
\newblock {\em An irregular
  mind, Bolyai Soc. Math. Stud.,} 7:131--217, 2010.  

\bibitem{BK}
A.~Bufetov and A.~Knizel.
\newblock Asymptotics of random domino tilings of rectangular Aztec diamonds.
\newblock {\em Ann. of IHP}, 54(3):150--1290, 2018.

\bibitem{CPT18}
N.~Chandgotia, R.~Peled, S. Sheffield, and M. Tassy.
\newblock Delocalization of uniform graph homomorphisms from {$\mathbb Z^2$} to
  {$\mathbb Z$}.
\newblock {\em arXiv:1810.10124}, 2018.

\bibitem{DST}
H.~Duminil-Copin, V.~Sidoravicius, and V.~Tassion.
\newblock Continuity of the phase transition for planar random-cluster and
  Potts models with $1\le q\le 4$.
\newblock {\em Communications in Mathematical Physics}, 349(1):47--107, 2017.

\bibitem{DGHMT}
H.~Duminil-Copin, M.~Gagnebin, M.~Harel, I.~Manolescu and V.~Tassion.
\newblock Discontinuity of the phase transition for the planar random-cluster and Potts models with $q>4$.
\newblock {\em arXiv:1611.09877}, 2016.

\bibitem{BetheAnsatz1}
H.~Duminil-Copin, M.~Gagnebin, M.~Harel, I.~Manolescu and V.~Tassion.
\newblock The Bethe ansatz for the six-vertex and XXZ models: an exposition 
\newblock {\em Prob. Surveys}, 15:102--130, 2018. 

\bibitem{DGPS}
H.~Duminil-Copin, A.~Glazman, R.~Peled and Y.~Spinka.
\newblock Macroscopic loops in the loop $O(n)$ model at Nienhuis' critical point.
\newblock {\em arXiv:1707:09335}, 2017.

  \bibitem{DumHonNol11}
H.~{Duminil-Copin}, C.~Hongler, and P.~Nolin.
\newblock Connection probabilities
  and {RSW}-type bounds for the two-dimensional {FK} {I}sing model.
  \newblock {\em Comm. Pure Appl. Math.} 64(9):1165--1198, 2011. 
  
  \bibitem{DT19}
H.~Duminil-Copin and V.~Tassion.
\newblock  Renormalization of crossing probabilities in the planar random-cluster model.
\newblock {\em arXiv:1901:08294}, 2019.


\bibitem{E09}
A.~Erschler.
\newblock Random mappings of scaled graphs.
\newblock {\em Probab. Theory Related Fields}, 144(3-4):543--579, 2009.

\bibitem{FS}
J. Fr\"ohlich and T. Spencer. 
\newblock The Kosterlitz-Thouless transition in two-dimensional abelian spin systems and the Coulomb
gas. 
\newblock {\em Comm. Math. Phys.}, 81(4):527--602, 1981.

\bibitem{G03}
D.~Galvin.
\newblock On homomorphisms from the {H}amming cube to $\mathbb Z$.
\newblock {\em Israel J. Math.}, 138(1):189--213, 2003.

\bibitem{GM}
A.~Glazman and I.~Manolescu.
\newblock Uniform Lipschitz functions on the triangular lattice have logarithmic variations.
\newblock {\em arXiv:1810.05592}, 2018.

\bibitem{Grimmett_perc}
G.~Grimmett.
\newblock {\em Percolation}, volume 37 of {\em Grundlehren der Mathematischen
  Wissenschaften [Fundamental Principles of Mathematical Sciences]}.
\newblock Springer-Verlag, Berlin, second edition, 1999.

\bibitem{glazman2019transition}
A.~Glazman and R.~Peled.
\newblock{\em On the transition between the disordered and antiferroelectric phases of the 6-vertex model}
\newblock {\em arXiv:1909.03436}, 2019.


\bibitem{K01}
J.~Kahn.
\newblock Range of cube-indexed random walk.
\newblock {\em Israel J. Math.}, 124:189--201, 2001.

\bibitem{Lie67a}
E.H. Lieb.
\newblock Exact Solution of the Two-Dimensional Slater KDP Model of a Ferroelectric.
\newblock {\em Physical Review}, 19(3):108--110, 1967.

\bibitem{Lie67b}
E.H. Lieb.
\newblock Residual entropy of square ice.
\newblock {\em Physical Review}, 162(1):162, 1967.

\bibitem{Lie67c}
E.H. Lieb.
\newblock Exact solution of the F model of an antiferroelectric.
\newblock {\em Condensed Matter Physics and Exactly Soluble Models}, 453-455, 1967.

\bibitem{LNR03}
M.~Loebl, J.~Ne\v{s}et\v{r}il, and B.~Reed.
\newblock A note on random homomorphism from arbitrary graphs to {$\Bbb Z$}.
\newblock {\em Discrete Math.}, 273(1-3):173--181, 2003.

\bibitem{P17}
R.~Peled.
\newblock High-dimensional {L}ipschitz functions are typically flat.
\newblock {\em Ann. Probab.}, 45(3):1351--1447, 2017.

\bibitem{Rus78}
L.~Russo.
\newblock A note on percolation.
\newblock {\em Z. Wahrscheinlichkeitstheorie und Verw.
  Gebiete}, 43(1):39--48, 1978.
  
\bibitem{SeyWel78}
P.~D. Seymour and D.~J.~A. Welsh.
\newblock Percolation probabilities on the square
  lattice.
  \newblock {\em Ann. Discrete Math}., 3:227--245, 1978.
  
\bibitem{She05}
S.~Sheffield.
\newblock Random surfaces.
\newblock {\em Ast\'erisque. Soci\'et\'e math\'ematique de France}, 2005.

\bibitem{GHM01}
H.O. Georgii, O. H\"{a}ggstrom, C. Maes
\newblock The random geometry of equilibrium phases.
\newblock{\em Phase transitions and critical phenomena. Vol. 18. Academic Press, 2001. 1-142.}

 \bibitem{Tas15} 
 V.~Tassion.
 \newblock  Crossing probabilities for Voronoi percolation.
\newblock {\em Annals of Probability}, 44(5):3385--3398, 2016.
\end{thebibliography}
\def\cprime{$'$}

\end{document}